\renewcommand{\F}{\mathcal{F}}
\newcommand{\frH}{\mathfrak{H}}
\newcommand{\frJ}{\mathfrak{J}}
\newcommand{\frD}{\mathfrak{D}}
\newcommand{\cC}{\mathcal{C}}
\newcommand{\frP}{\mathfrak{P}}
\let\oldabs\abs
\def\abs{\@ifstar{\oldabs}{\oldabs*}}
\newcommand{\todo}[1]{\textcolor{red}{Todo: #1}}
\DeclareMathOperator{\Jac}{\mathrm{Jac}}
\DeclareMathOperator{\Nm}{\mathrm{Nm}}
\DeclareMathOperator{\Cl}{\mathrm{Cl}}
\DeclareMathOperator{\Ann}{\mathrm{Ann}}
\newcommand{\X}{\mathcal{X}}
\begin{document}

\title[Obstructions to unirationality for product-quotient surfaces over $\ol{\FF}_p$]{Obstructions to unirationality for \\ product-quotient surfaces over $\ol{\FF}_p$}
\author{Benjamin Church} 
\address{Department of Mathematics, Stanford University, 
380 Jane Stanford Way, CA 94086} 
\email{{\tt bvchurch@stanford.edu}}

\thanks{}
\date{\today}

\subjclass[2020]{Primary: 14E08. Secondary: 14G17 and 14J29.}

\begin{abstract}
We construct a surface over $\ol{\FF}_p$ with $\pi_1^{\et}(X) = 1$ that is supersingular -- in the sense that $H^2_{\et}(X, \Q_{\ell}(1))$ is spanned by algebraic cycles -- but is not unirational. This provides a counterexample to a 1977 conjecture of Shioda. To achieve this, we produce new obstructions to unirationality for product-quotient surfaces.
\end{abstract}

\maketitle
\makeatletter
\newcommand\@dotsep{4.5}
\def\@tocline#1#2#3#4#5#6#7{\relax
  \ifnum #1>\c@tocdepth 
  \else
    \par \addpenalty\@secpenalty\addvspace{#2}%
    \begingroup \hyphenpenalty\@M
    \@ifempty{#4}{%
      \@tempdima\csname r@tocindent\number#1\endcsname\relax
    }{%
      \@tempdima#4\relax
    }%
    \parindent\z@ \leftskip#3\relax
    \advance\leftskip\@tempdima\relax
    \rightskip\@pnumwidth plus1em \parfillskip-\@pnumwidth
    #5\leavevmode\hskip-\@tempdima #6\relax
    \leaders\hbox{$\m@th
      \mkern \@dotsep mu\hbox{.}\mkern \@dotsep mu$}\hfill
    \hbox to\@pnumwidth{\@tocpagenum{#7}}\par
    \nobreak
    \endgroup
  \fi}
\def\l@section{\@tocline{1}{0pt}{1pc}{}{\bfseries}}
\def\l@subsection{\@tocline{2}{0pt}{25pt}{5pc}{}}
\makeatother


\tableofcontents

\section{Introduction}

Let $X$ be a smooth projective variety over $\ol{\FF}_p$. In analogy with the case of elliptic curves, we say that $X$ is \textit{supersingular} if the $q$-Frobenius action on $H^i_{\et}(X, \Q_{\ell})$, for some $\ell \neq p$, has all its eigenvalues equal to $q^{i/2}$ times a root of unity for each $i$. The Tate conjecture predicts, when odd cohomology vanishes, supersingularity is equivalent to \textit{Shioda supersingularity}: that $H^{2i}_{\et}(X, \Q_{\ell}(i))$ is spanned by algebraic cycles.
\par 
In 1977, Shioda made the following conjecture predicting when a surface over $\ol{\FF}_p$ is unirational \cite{Shioda:unirational}.

\noindent
\textbf{Conjecture (Shioda).} \emph{Let $X$ be a smooth projective surface over $\ol{\FF}_p$ with $\pi_1^{\et}(X) = 1$. Then $X$ is unirational if and only if it is Shioda supersingular.}

In this paper, we construct a counterexample to Shioda's conjecture. The examples, whose invariants are given in \S\ref{section:case_D7}--\ref{section:case_A4}, will be the minimal resolution $X$ of certain diagonal quotients $(C \times C)/G$ where $C$ is a Hurwitz curve of genus $14$ with automorphism group $\PSL_2(\FF_{13})$ and $G$ is any copy of the groups $D_{6}, D_{7}$ or $A_4$ inside $\PSL_2(\FF_{13})$. These surfaces are defined over the number field $K_+ = \Q(\cos{\frac{2 \pi}{7}})$ and their reduction at infinitely many places will violate Shioda's conjecture. The first few such characteristics are $1091, 2339, 6551, 7643, \dots$ (see Appendix~\ref{appendix:computations}).   We will actually conclude the following stronger result about curves covering the reduction of $X$.

\begin{Lthm} \label{intro:thm:example}
Let $X$ be one of the surfaces described above. For each prime $\p \subset K_+$ of good reduction, its reduction $X_{\p}$ is not covered by rational or elliptic curves. However, $X$ is simply-connected and $X_{\p}$ is Shioda supersingular for infinitely many $\p$. 
\end{Lthm}

First, we demonstrate new methods to prove that a surface in positive characteristic is not unirational or even covered by elliptic curves. This uses the strategy of Bogomolov's proof \cite{Bog} of finiteness of rational and elliptic curves on a general type surface satisfying $c_1^2 > c_2$ along with an argument by infinite descent. This strategy leads to the following criterion for product-quotient surfaces, those surfaces birational to a diagonal quotient $(C_1 \times C_2) / G$ where $C_i$ are smooth projective curves equipped with faithful $G$-actions.

\begin{Lthm}[Theorem~\ref{thm:obstruction} + Theorem~\ref{thm:obstruction_elliptic}] \label{intro:thm:criterion}
Let $\pi : X \to (C_1 \times C_2)/G$ be a product-quotient surface defined over $\FF_p$. If $X$ carries a symmetric $2m$-form $\omega \in H^0(X, \Sym^{2m} \Omega_X)$ that is ``diagonal'', i.e.\ locally $\omega = f(z_1, z_2) \d{z_1}^m \d{z_2}^m$ (see Definition~\ref{def:diagonal}), then $X$ has only finitely many rational curves of fixed degree. If the vanishing locus of $\omega$ contains a big divisor then there are only finitely many elliptic curves on $X$ of fixed degree. 
\end{Lthm}

The reason for the assumption that $C_1, C_2$ and the $G$-actions are defined over $\FF_p$ is so that we do not need to worry about ``asymmetric Frobenius twists'' of $X$ (see the discussion before Theorem~\ref{thm:obstruction}). In the main text, we will remove this unnecessary assumption at the cost of additional notation and strengthening the hypothesis by requiring symmetric forms on these related ``asymmetric twists''. This more general statement is necessary in the proof of Theorem~\ref{intro:thm:example}.
\par 
Once we know that certain symmetric forms provide an obstruction to unirationality in positive characteristic, we can view Shioda's conjecture as a claim about the geography of complex surfaces using spreading out techniques. In this way, the problem of constructing examples turns into primarily one of complex geometry. Indeed, Shioda would predict that the following three properties are in tension:
\begin{enumerate}
    \item $X$ is simply-connected
    \item $X$ carries a diagonal (see Definition~\ref{def:diagonal}) symmetric form $\omega \in H^0(X, \Sym^{2m}{\Omega_X})$
    \item $X$ has primes of supersingular reduction which, via the philosophy of the Mumford-Tate conjecture, we consider as a constraint on its Hodge structure.
\end{enumerate}
Product-quotient surfaces are, luckily, the unique case where the author knows enough tools to check everything above. Still, in searching for examples, one runs into an apparent dilemma: to show (3) one wants $G$ to be very large to control the Galois representations of $C_1, C_2$ but the larger $G$ is, the more difficult (2) is to achieve. The trick needed to have our cake and eat it too is to consider curves $C_i$ with extremely large automorphism groups but to choose $G$ to be only a modest subgroup of $\Aut(C_i)$ just big enough that $C_i / G \cong \P^1$. This can be viewed as choosing a highly symmetric special point on the Hurwitz space of $G$-covers $C \to \P^1$ so that it has infinitely many primes of supersingular reduction although most other $G$-covers likely do not. A reasonable choice is to consider \textit{Hurwitz curves}, those achieving the Hurwitz bound $\# \Aut(C) = 84(g(C)-1)$. The first time the numerics are in our favor occurs at genus $14$ with the so-called ``first Hurwitz triplet'' consisting of the three isomorphism classes of complex genus $14$ curves with $\Aut(C) \cong \PSL_2(\FF_{13})$ with $84 \cdot (14 - 1) = 1092$ automorphisms. Exploiting these symmetries, we can actually find an example breaking this tension predicted by Shioda's conjecture.
\par 
Bogomolov's method requires the existence of a symmetric differential form (hence why it is usually stated only for surfaces satisfying $c_1^2 > c_2$). Subsequently, many authors sought to generalize Bogomolov's arguments, replacing the symmetric form with a section of a higher jet bundle leading to, for example, the proof of Kobayashi's conjecture for hypersurfaces of very large degree. Likewise, in upcoming work, the author will extend the techniques of this manuscript to employ higher jets as unirationality obstructions. 

\textbf{Conventions.} A \textit{variety} is a geometrically integral separated scheme of finite type over a field. A resolution of surface singularities $\pi : \tilde{S} \to S$ is \textit{minimal} if there is no $(-1)$-curve (possibly defined over $\bar{k}$) contracted by $\pi$. When we say a surface $X$ is \textit{minimal}, we mean it is smooth projective and contains no $(-1)$-curve (possibly defined over $\bar{k}$). When $X$ is smooth projective of nonnegative Kodaira dimension, minimality is equivalent to the condition the condition that $K_X$ is nef. The group $D_{n}$ is the dihedral group with $2n$ elements. For example, $D_{6}$ is the symmetries of the regular hexagon.

\textbf{Acknowledgments.} I thank Daniel Litt, Ravi Vakil, John Voight, Johan de Jong, H\'{e}l\`{e}ne Esnault, Noam Elkies, Peter Sarnak, Nathan Chen, and Anh~Duc~Vo for numerous enlightening conversations and comments on this manuscript. The appendices were made possible by the detailed advice of John Voight. 
Special thanks are due to the Department of Mathematics at Harvard University for its hospitality during the 2024--2025 academic year. During the preparation of this article, the author was partially supported by an NSF Graduate Research Fellowship under grant DMS-2103099 DGE-2146755. It would be impossible to record the special debt this article, and its author, owe to Ming Jing.

\section{Rational curves on product-quotient surfaces} \label{sec:obstruction}

In this section, we will use the existence of a particular type of symmetric differential form -- which we refer to as \textit{diagonal} -- to obstruct unirationality. In particular, we will actually show there are only finitely many rational curves of bounded degree. The idea is inspired by techniques used to establish the hyperbolicity of complex varieties, particularly Bogomolov's proof of boundedness for fixed genus curves on surfaces of general type satisfying $c_1^2 > c_2$. In the particular case of rational curves, Bogomolov uses the fact that any symmetric differential must vanish when restricted to the curve and hence the curve lifts ``holonomically'' to the vanishing locus $Z$ inside $\P_X(\Omega_X)$ of the symmetric form. The fact that the curve on $Z$ arose as a holonomic lift turns out to imply it lies tangent to a particular ``holonomic'' foliation on $Z$. Bogomolov concludes his argument by showing (or invoking a theorem of Jouanolou) that algebraic leaves of a foliation form a bounded family. Up until the last step, the argument works perfectly well in any characteristic. However, the final step fails miserably for foliations in characteristic $p$ because there are far ``too many'' curves tangent to a $p$-closed foliation. Indeed, curves tangent to the foliation are exactly those pulled back from Ekedahl's inseparable quotient by the foliation (see \S\ref{section:Ekedahl}). Often, e.g.\ if the Ekedahl quotient is not of general type, these curves form unbounded families. 
\par 
Our replacement for the final step uses the special ``diagonal'' property of the chosen symmetric differential. This will mean that there is self-similarity between the problem of finding rational curves on $X$ and on the quotients by the induced foliations. This sets up an argument by infinite descent. 

\begin{defn} \label{def:diagonal}
Let $C_1$ and $C_2$ be smooth projective curves. The decomposition
\[ \Omega_{C_1 \times C_2} = \pi_1^* \Omega_{C_1} \oplus \pi_2^* \Omega_{C_2} \]
induces a map
\[ \omega_{C_1 \times C_2} = \pi_1^* \Omega_{C_1} \ot \pi_2^* \Omega_{C_2} \embed \Sym^{2}{\Omega_{C_1 \times C_2}} \]
realizing $\omega_{C_1 \times C_2}$ as a direct summand. We call a symmetric $2m$-form \textit{diagonal} if it arises from a section of $\omega_{C_1 \times C_2}^{\ot m}$ along the $m$-fold power of this map.
\end{defn}

\begin{defn}
Let $f : C_1 \times C_2 \rat X$ be a generically \etale dominant rational from a product of smooth curves to a smooth surface $X$. Call a section $\omega \in H^0(X, \Sym^{2m}{\Omega_X})$ \textit{diagonal} if $f^* \omega$ is diagonal.
\end{defn}

Let $\pi : X \to (C_1 \times C_2)/G$ be the minimal resolution of a product-quotient surface over a perfect field $k$. Denote by $E$ the exceptional divisor of $\pi$. We also need to consider associated ``asymmetrically Frobenius twisted'' surfaces, the minimal resolution \[ \pi^{a,b} : X^{a,b} \to (C_1^{(p^a)} \times C_2^{(p^b)})/G \] 
of the diagonal quotient of the twisted curves with their induced $G$-actions. Here $C^{(p^a)}$ denotes the $a$-th Frobenius twist of $C$ meaning the pullback of $C \to \Spec{k}$ along $\Frob^a : \Spec{k} \to \Spec{k}$ with its functorially induced $G$-action. Note that $X^{a,a} = X^{(p^a)}$. When $C_1$, $C_2$, and the $G$-actions are defined over $\FF_p$, then actually $X$ and $X^{a,b}$ are isomorphic because $C$ and $C^{(p)}$ are $G$-equivariantly isomorphic. Usually $X^{a,b}$ has very similar geometry to $X$ but they are not always isomorphic or even have the same numerical invariants (see Example~\ref{example:twists_have_different_numerics}). Because $G$ acts diagonally, the maps $F_1 : C_1 \times C_2 \to C_1^{(p)} \times C_2$ and $F_2 : C_1 \times C_2 \to C_1 \times C_2^{(p)}$ applying relative Frobenius in the first or second coordinate respectively are $G$-equivariant. Therefore, they induce maps $F_1 : X^{a,b} \to X^{a+1, b}$ and $F_2 : X^{a,b} \to X^{a,b+1}$. 

\begin{theorem} \label{thm:obstruction}
If, for all $a,b \ge 0$, the surface $X^{a,b}$ carries a diagonal symmetric form, then $X$ has only finitely many rational curves of fixed degree. 
\end{theorem}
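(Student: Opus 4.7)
My plan follows the Bogomolov-style reduction sketched in the introduction, augmented by an infinite descent that exploits the diagonal structure of $\omega$. Suppose for contradiction that $X$ carries an infinite family $\{C_t\}_{t \in T}$ of rational curves of a fixed degree $d$ with respect to some polarization $H$; I replace $T$ by an irreducible, positive-dimensional component. Because $H^0(\P^1, \Sym^{2m}\Omega_{\P^1}) = 0$, the tangent lift $\tilde{C}_t \subset \P(\Omega_X)$ of each rational curve lies in the vanishing locus $Z(\omega) \subset \P(\Omega_X)$ of the chosen diagonal form. On the smooth locus of $(C_1 \times C_2)/G$ the diagonality gives $\omega = f(z_1,z_2)\,(\d{z_1}\,\d{z_2})^m$, so $Z(\omega)$ decomposes into three pieces: the preimage $D$ of the divisor $\{f=0\}$, and the two sections $Z_1, Z_2 \subset \P(\Omega_X)$ cut out by the foliations $\F_1 = \langle \partial_{z_1}\rangle$ and $\F_2 = \langle \partial_{z_2}\rangle$ tangent to the $C_1$- and $C_2$-fibers. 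The exceptional divisor of the resolution $\pi$ contributes only finitely many rational curves, which can be discarded.

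After passing to a further subfamily, all the lifts $\tilde{C}_t$ lie in a single irreducible component of $Z(\omega)$. If this component lies over an irreducible component of $\{f=0\}$, then all the $C_t$ are supported inside a fixed irreducible curve on $X$, giving at most one rational $C_t$ -- a contradiction. Otherwise, say every $C_t$ is tangent to $\F_1$. Because $\F_1$ is $p$-closed with Ekedahl quotient given by the relative Frobenius $F_1 : X \to X^{1,0}$ in the first factor, the standard theory of tangent curves to $p$-closed foliations shows that each $C_t$ is either a leaf of $\F_1$ or the scheme-theoretic preimage $C_t = F_1^{-1}(C_t')$ of a rational curve $C_t' \subset X^{1,0}$. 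The rational leaves of $\F_1$ on $X$ are finite in number: they are the images of $C_1 \times \{y\}$ for the finitely many $y \in C_2$ whose stabilizer $H_y \subset G$ gives a rational quotient $C_1/H_y$.

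The crucial computation is that the transverse degree strictly decreases under this preimage operation. Writing $d_i(C) = C \cdot \pi_i^* H_i$ and using that $F_1\rvert_{C_t}$ is purely inseparable of degree $p$ (since $C_t$ is tangent to $\ker \d F_1$), the projection formula gives $d_1(C_t') = d_1(C_t)$ and $d_2(C_t') = d_2(C_t)/p$; in particular $p \mid d_2(C_t)$. The image family $\{C_t'\}$ is still infinite because $F_1$ is finite and so $C_t \mapsto C_t'$ is injective on $T$, it still consists of rational curves, and it has strictly smaller total degree. By hypothesis $X^{1,0}$ again carries a diagonal form, so the same dichotomy applies. Iterating the descent drives the transverse degree to $0$ in at most $\log_p d$ steps, at which point the family would have to consist entirely of leaves on some $X^{a,b}$, contradicting their finiteness.

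The main obstacle is bookkeeping rather than conceptual. I must verify that pushing an irreducible family under the finite inseparable morphism $F_i$ preserves irreducibility and positive-dimensionality, and that a compatibility of the shape $F_1^* H^{1,0} = H + (p-1)\pi_1^* H_1$ holds for suitably chosen polarizations on $X$ and $X^{1,0}$ even through the resolution $\pi$. I must also verify uniformly across all twists $X^{a,b}$ that (i) the diagonal form localizes as $f^{(a,b)}\,(\d{z_1}\d{z_2})^m$ and yields the same three-piece decomposition of its vanishing locus, and (ii) the count of rational leaves and of rational curves inside $\{f^{(a,b)} = 0\}$ remains finite. These uniformities are automatic because Frobenius twisting preserves the $G$-action and all the combinatorial stabilizer data, which is exactly what the hypothesis that $X^{a,b}$ carries a diagonal form for every $a,b \ge 0$ supplies in order for the induction to run.
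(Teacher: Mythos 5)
Your proposal is correct and follows essentially the same route as the paper's proof: holonomically lift each rational curve into $Z(\omega) \subset \P_X(\Omega_X)$, use diagonality to split the dominating components of $Z(\omega)$ into the two coordinate foliations plus a non-dominating part over $\{f=0\}$, descend tangent curves along the partial Frobenii to $X^{1,0}$ or $X^{0,1}$ via Ekedahl's correspondence, and terminate by the factor-of-$p$ drop in transverse degree. The only points where the paper is more careful than your sketch are (i) the precise statement that a tangent lift is a \emph{leaf} and hence the generically reduced partial-Frobenius pullback of a curve downstairs (Lemmas~\ref{lemma:holonomic_leaf} and~\ref{lem:tangent_leaf_pullback}), which you invoke as ``standard theory'' while conflating Ekedahl leaves with integral curves, (ii) the finiteness of rational fiber-images, which is not automatic but follows because a nonzero diagonal form forces $g(C_i) \ge 1$, and (iii) an induction on $\#G$ covering the a priori possibility that a dominating component of $Z(\omega)$ yields a non-birational cover $S \to X$.
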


In particular, this sequence of diagonal symmetric forms give an obstruction to unirationality of $X$. Any section of a tensor power of $\Omega_X$ (or of any Schur functor) obstructs separable unirationality via pulling back to $\P^2$. The difficulty is in ruling out inseparable maps $\P^2 \rat X$. Instead of worrying about this problem, we study the rational curves individually. This has the advantage that, up to reparameterization, we may assume the curves are generically immersed and hence restriction of forms works as in characteristic zero. 

\begin{example}
It is important to note that the existence of some (not necessarily diagonal) symmetric form is not sufficient to obstruct unirationality. For example, Mumford  \cite{pathologies} constructed Enriques surfaces $X$ in characteristic $2$ carrying a $1$-form $\omega$. These $X$ are all unirational, in fact an Enriques surface in characteristic $2$ has a $1$-form if and only if it is unirational by \cite[Proposition 1.3.8]{enriques} and \cite{enriques_unirational}. In this case, the foliation $\F$ associated to $\omega$ defines a quotient map $X \to X / \F$ with $X / \F$ either rational or a unirational K3. We learn that all but finitely many rational curves on $X$ descend to $X / \F$, but of course there are many such curves. 
\end{example}

\subsection{Ekedahl's theory of positive characteristic foliations} \label{section:Ekedahl}

We first review the theory of foliations, what Ekedahl calls $1$-foliations, in positive characteristic developed by Ekedahl \cite{Ek87}.

\begin{defn}
Let $X$ be a smooth variety. A \textit{foliation} $\F$ is a subsheaf $\F \subset \T_X$ of the tangent bundle $\T_X$ of $X$ which is
\begin{enumerate}
    \item \textit{integrable} meaning $[\F, \F] \subset \F$ under the Lie bracket
    \item \textit{saturated} meaning $\T_X / \F$ is torsion free.
\end{enumerate}
\end{defn}

\begin{defn}
Let $\F \subset \T_X$ be a foliation on $X$. We say a subvariety $Z \embed X$ is \textit{tangent} to $\F$  if the map $\T_Z \to \T_X |_Z$ factors through $\F_Z \subset \T_X|_Z$: the saturation of $(\F|_Z)^{\vee \vee} \embed \T_X|_Z$. If additionally $\T_Z \to \T_X|_Z$ is identified with $\F_Z$ we say that $Z$ \textit{is a leaf} of $\F$.
\end{defn}

Note that the property of being tangent depends only on behavior along a dense open set (e.g.\ along the smooth locus of $Z$). From now on in this section, let $k$ be a perfect field of characteristic $p$ and all varieties will be varieties over $k$. Here we let $\Frob_X : X \to X$ denote the ($k$-semilinear) absolute Frobenius and $F_{X/k} : X \to X^{(p)}$ denote the ($k$-linear) relative Frobenius.

\begin{defn}
The $p$-curvature of a foliation $\F$ on $X$ is the $\struct{X}$-linear map
\[ \psi_p : \Frob_X^* \F \to \T_X / \F \quad \partial \mapsto \partial^{[p]} \mod \F \]
where $\partial^{[p]}$ is the derivation obtained by iterating $\partial$ as a differential operator $p$-times. We say that $\F$ is \textit{$p$-closed} if $\psi_p = 0$ or equivalently if $\F^{[p]} \subset \F$. 
\end{defn}

\begin{defn}
Let $f : X \rat Y$ be a dominant rational map of varieties. We say that $f$ is \textit{purely inseparable} if the map of function fields $k(Y) \to k(X)$ is purely inseparable. If, moreover, $f$ is generically finite, then $k(X)^{p^h} \subset k(X)$ for some $h$. The smallest such $h$ is called the \textit{height} of $f$. Note that if $f$ has height $h$ then by definition there is a factorization $g \circ f : X \rat Y \rat X^{(p^h)}$ of the $h$-iterated relative Frobenius $F_{X/k}^h : X \to X$. 
\end{defn}

\begin{theorem}[Ekedahl]
Let $X$ be a smooth variety. There is an equivalence between the following two posets
\begin{enumerate}
    \item $p$-closed foliations $\F \subset \T_X$
    \item finite height-$1$ purely inseparable morphisms $f : X \to Y$ with $Y$ normal
\end{enumerate}
constructed as follows: given a $p$-closed foliation $\F \subset \T_X$ we define $f : X \to X / \F$ where $f$ is the identity of the underlying topological spaces and $\cO_{X/\F} = \Ann_{\cO_X}(\F)$ where explicitly
\[ \struct{X/\F}(U) := \{ s \in \struct{X}(U) \mid \forall \partial \in \F(U) : \partial s = 0 \}. \]
Therefore, $f$ is the map $X \to \mathbf{Spec}_X(\Ann_{\cO_X}(\F))$.
In reverse, given $f : X \to Y$ the $p$-closed foliation is the subsheaf $\T_{X/Y} \subset \T_X$ of relative tangent fields. Furthermore, the correspondence has the following properties:
\begin{enumerate}
    \item $Y$ is smooth iff $\F$ is \textit{nonsingular} ($\F \subset \T_X$ is a subbundle) iff $f$ is flat
    \item there is an exact sequence of $\cO_X$-modules
    \[ 0 \to \F \to \T_X \to f^* \T_Y \to \Frob_X^* \F \to 0 \]
    which implies that
    \[ K_X = f^* K_Y + (p-1) c_1(\F) \]
    \item the map $f^* \T_Y \to \Frob_X^* \F$ is the pullback of a map $\T_Y \to g^* \sigma^* \F$ where $g : Y \to X^{(p)}$ is such that $g \circ f = F_X$ and $\sigma : X^{(p)} \to X$ is the canonical $k$-semilinear isomorphism. Furthermore, the kernel of $\T_Y \to g^* \sigma^* \F$ is a $p$-closed foliation defining $g$. 
\end{enumerate}
\end{theorem}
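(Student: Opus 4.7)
The plan is to reduce the correspondence to a local computation that is a geometric restatement of Jacobson's correspondence between intermediate fields of a purely inseparable exponent-$1$ extension and restricted Lie subalgebras of derivations. The main technical obstacle will be a local straightening step in positive characteristic: showing that \'etale-locally any integrable $p$-closed foliation of rank $r$ takes the form $\F = \langle \partial/\partial x_1, \ldots, \partial/\partial x_r \rangle$ in suitable coordinates $x_1, \ldots, x_n$. This is the characteristic-$p$ analogue of the Frobenius theorem, and it needs both integrability and $p$-closedness in essential ways, replacing the characteristic-zero exponential flow with an \'etale-local argument exploiting the restricted Lie algebra structure.

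For the forward direction, given $\F$ one checks immediately that $\cO_X^p \subset \Ann_{\cO_X}(\F)$ because $\partial(a^p) = p a^{p-1} \partial(a) = 0$, so the map $X \to X/\F$ factors the relative Frobenius of $X$ and is in particular purely inseparable of height $1$. Granting the local straightening, on the open locus where $\F$ is a subbundle one has $\Ann(\F) = \cO_X^p[x_{r+1}, \ldots, x_n]$ locally, so $Y = X/\F$ is smooth there and $X \to Y$ is free of rank $p^r$. The saturation hypothesis forces the singular locus of $\F$ to have codimension $\ge 2$, which gives regularity of $Y$ in codimension $1$; combined with $S_2$ inherited from the finite-flat locus, $Y$ is normal.

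For the reverse direction, given a finite height-$1$ purely inseparable $f : X \to Y$ with $Y$ normal, set $\F = \T_{X/Y}$. Integrability and saturation are automatic, and $p$-closedness follows from Hochschild's formula (the $p$-th iterate of a derivation is again a derivation) together with the observation that $\partial(\cO_Y) = 0$ implies $\partial^{[p]}(\cO_Y) = 0$. That the two constructions are mutually inverse reduces to the identity $\cO_Y = \Ann_{\cO_X}(\T_{X/Y})$. The inclusion $\subset$ is tautological, and $\supset$ reduces at each generic point to the purely inseparable exponent-$1$ field extension $\cO_X^p \subset \cO_Y \subset \cO_X$, where Jacobson's correspondence gives the equality directly. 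Finiteness of $f$ together with normality of $Y$ then propagates the generic identity to an equality of coherent sheaves.

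Finally I would derive the structural properties by direct inspection in the adapted local coordinates. The map $f^* \T_Y \to \Frob_X^* \F$ sending $\partial/\partial (x_i^p) \mapsto 1 \otimes \partial/\partial x_i$ for $i \le r$, and killing the $\partial/\partial x_j$ with $j > r$, yields the four-term exact sequence $0 \to \F \to \T_X \to f^* \T_Y \to \Frob_X^* \F \to 0$ by direct inspection; taking determinants gives the canonical divisor formula $K_X = f^* K_Y + (p-1) c_1(\F)$. For (3), the tower $X \xrightarrow{f} Y \xrightarrow{g} X^{(p)}$ with $g \circ f = F_{X/k}$ makes $g$ itself a height-$1$ purely inseparable morphism, so applying the already-established correspondence to $g$ identifies its defining foliation as the kernel of $\T_Y \to g^* \sigma^* \F$ in the promised fashion.
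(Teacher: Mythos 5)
This theorem is background that the paper imports from Ekedahl's \emph{Foliations and inseparable morphisms} without proof, so there is no in-paper argument to compare against; what you have written is an outline of the standard proof, and it is essentially Ekedahl's own route (Jacobson's exponent-one Galois correspondence at the generic point, plus local analysis where $\F$ is a subbundle). The key reductions are all correct: $\cO_X^p \subset \Ann_{\cO_X}(\F)$ forces $X \to X/\F$ to factor the relative Frobenius and hence be finite purely inseparable of height one; $\T_{X/Y}$ is automatically integrable, saturated, and $p$-closed (since $\partial^{[p]}$ is again a derivation killing $f^{-1}\cO_Y$); and the identity $\cO_Y = \Ann_{\cO_X}(\T_{X/Y})$ follows from the generic-point statement together with normality of $Y$, because $\Ann_{\cO_X}(\T_{X/Y})$ is finite over $\cO_Y$ with the same fraction field.

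Two places deserve more care than your sketch gives them. First, the étale-local (really, formal-local) straightening of a $p$-closed integrable subbundle to $\langle \partial/\partial x_1,\dots,\partial/\partial x_r\rangle$ is the genuinely hard input; it is proved by normalizing one nonvanishing $p$-closed vector field at a time in the completed local ring (Rudakov--Shafarevich/Seshadri-style), and you should either carry this out or cite it, since the entire ``$Y$ smooth iff $\F$ a subbundle iff $f$ flat'' equivalence and the rank-$p^r$ freeness hang on it. For normality of $X/\F$ in general, note there is a shortcut that avoids straightening entirely: $\Ann_{\cO_X}(\F) = \cO_X \cap k(X)^{\F}$ is the intersection of a normal ring with a subfield of its fraction field, hence normal. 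Second, the four-term exact sequence $0 \to \F \to \T_X \to f^*\T_Y \to \Frob_X^*\F \to 0$ is verified by your coordinate inspection only on the open locus where $\F$ is a subbundle; since the complement has codimension $\ge 2$ by saturation, this suffices for the divisor-class identity $K_X = f^*K_Y + (p-1)c_1(\F)$, but if you want exactness of the sequence of sheaves on all of $X$ you need a separate argument (or should state it only where $\F$ is nonsingular, as Ekedahl effectively does). With those caveats the proposal is sound.
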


We will use Ekedahl's constructions to understand the leaves of a foliation $\F$ in characteristic $p$. When $\F$ is $p$-closed, we get the following useful characterization of the leaves of $\F$.

\begin{lemma} \label{lem:tangent_leaf_pullback}
Let $\F$ be a $p$-closed foliation on a smooth variety $X$. Let $Z \subset X$ be a subvariety of dimension $\rank{\F}$ not contained in the singular locus of $\F$. Then $Z$ arises as the (generically reduced) pullback of a subvariety $Z' \subset X / \F$ if and only if $Z$ is a leaf of $\F$.    
\end{lemma}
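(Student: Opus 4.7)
The plan is to translate both implications into properties of the Ekedahl quotient $f : X \to Y := X/\F$, whose relative tangent sheaf is $\F$ itself. I work in a neighborhood of the generic point $\eta$ of $Z$, which by hypothesis lies in the nonsingular locus of $\F$; there $\F \subset \T_X$ is a subbundle of rank $r = \dim Z$ and $f$ is finite flat of degree $p^r$.

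For $(\Leftarrow)$, suppose $Z = f^{-1}(Z')$ is generically reduced, so the ideal of $Z$ is $I_Z = I' \cdot \struct X$ where $I' \subset \struct Y$ is the ideal of $Z'$. Because $\struct Y = \Ann_{\struct X}(\F)$, every derivation $\partial \in \F$ kills $I'$, and Leibniz gives $\partial(I' \struct X) \subset I_Z$. Hence each such $\partial$ induces a derivation on $\struct Z$, producing an inclusion $\F|_Z \hookrightarrow \T_Z$ inside $\T_X|_Z$. Saturating yields $\F_Z \subset \T_Z$, and since both are saturated subsheaves of $\T_X|_Z$ of the same rank $r$, they coincide and $Z$ is a leaf.

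For $(\Rightarrow)$, suppose $Z$ is a leaf and set $Z' := f(Z)$ with its reduced induced structure, so $Z' \subset Y$ is closed of dimension $r$ and $Z \subset f^{-1}(Z')$. The key computation is that $d(f|_Z) : \T_Z \to (f|_Z)^* \T_{Z'}$ vanishes generically: the composite $\T_Z \hookrightarrow \T_X|_Z \to (f^* \T_Y)|_Z$ is zero since the leaf condition identifies $\T_Z$ with $\F_Z$ and the defining sequence $0 \to \F \to \T_X \to f^* \T_Y$ kills it, while $(f|_Z)^* \T_{Z'}$ injects into $(f^* \T_Y)|_Z$ on the smooth locus of $Z'$. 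Combined with the height-$1$ factorization $F_{X/k} = h \circ f$ with $h : Y \to X^{(p)}$, a diagram chase produces $f|_Z : Z \xrightarrow{F_{Z/k}} Z^{(p)} \xrightarrow{g} Z'$ with $h|_{Z'} \circ g = \mathrm{id}_{Z^{(p)}}$; since $h|_{Z'}$ is purely inseparable and admits the section $g$, both are mutually inverse isomorphisms and $f|_Z$ has generic degree $p^r$.

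To conclude, flat base change along $Z' \hookrightarrow Y$ makes $f^{-1}(Z') \to Z'$ finite flat of degree $p^r$, and $Z \hookrightarrow f^{-1}(Z')$ is already finite of the same degree $p^r$ over $Z'$. Thus $Z$ and $f^{-1}(Z')$ share the same cycle class, so by the reducedness of $Z$ they coincide at $\eta$ with $f^{-1}(Z')$ generically reduced. The step I expect to be the main obstacle is the Frobenius factorization $f|_Z \cong F_{Z/k}$ via Ekedahl's height-$1$ machinery: this is what converts the infinitesimal input $d(f|_Z) = 0$ into the numerical output $\deg(f|_Z) = p^r$ needed by the final flat base-change step; the rest is formal once $\F$ is known to be a subbundle near $\eta$.
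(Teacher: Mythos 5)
Your proof is correct and follows essentially the same route as the paper's: localize at the generic point where $\F$ is a subbundle, prove leaf $\Rightarrow$ pullback by factoring $f|_Z$ through the relative Frobenius of $Z$ (via $d(f|_Z)=0$ and the section trick forcing $Z' \to Z^{(p)}$ to be birational) and then counting degrees against the flat degree $p^{r}$ of $f$, exactly the ``degree considerations'' the paper invokes. The only substantive difference is in the converse direction, where you argue dually -- derivations in $\F$ annihilate $\cO_{X/\F}$, hence preserve the ideal $I'\cdot\cO_X$ and induce an inclusion $\F|_Z \subset \T_Z$ of equal generic rank -- whereas the paper deduces from $\Omega_{F/Z'} \cong \F^{\vee}|_F$ that $Z \to Z'$ has vanishing differential and hence that $\T_Z$ factors through $\F_Z$; both mechanisms are sound.
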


\begin{proof}
There is always a commutative diagram of relative Frobenii,
\begin{center}
    \begin{tikzcd}
    Z \arrow[r] \arrow[d] & Z^{(p)} \arrow[d]
    \\
    X \arrow[r] & X^{(p)}
    \end{tikzcd}
\end{center}
This is not Cartesian: the fiber product is rather the extension of $Z$ by nilpotents along its conormal sheaf. If $Z$ is a leaf, the map $\T_Z \to \T_X|_Z$ is identified with (the reflexive hull) of $\F_X|_Z \to \T_X$ therefore there is a compatible diagram of quotients by foliations
\begin{center}
    \begin{tikzcd}
     Z \arrow[d] \arrow[r] & Z^{(p)} \arrow[d] \arrow[r, equals] & Z^{(p)} \arrow[d]
     \\
     X \arrow[r] & X / \F \arrow[r] & X^{(p)}
    \end{tikzcd}
\end{center}
so the unique map $Z \to Z^{(p)} \times_{X / \F} X$ must be an isomorphism at the generic point by degree considerations. 
\par 
Since $Z$ is not contained in the singular locus of $\F$, we may shrink to assume that $Y = X / \F$ is smooth and $f : X \to X / \F = Y$ is flat. Now, let $Z' \subset X / \F$ be a subvariety and consider the fiber product $F := Z' \times_{X / \F} X$. By base change, the relative cotangent bundle $\Omega_{F/Z'}$ is the pullback of $\Omega_{X/Y} \cong \F^{\vee}$ where $f : X \to X / \F$ is the quotient map. Hence, if $F$ is generically reduced, i.e., the case when $Z$ is the preimage of $Z'$, then $Z \to Z'$ must have zero differential. Therefore, from the diagram
\begin{center}
    \begin{tikzcd}
    & & \T_Z \arrow[r, "0"] \arrow[d] \arrow[ld, dashed] & \T_{Z'}|_Z \arrow[d]
    \\
    0 \arrow[r] & (\F|_Z)^{\vee \vee} \arrow[r] & \arrow[r] \T_X |_Z \arrow[r] & \T_{X / \F}|_Z
    \end{tikzcd}
\end{center}
we see that $\T_Z \to \T_X|_Z$ factors through $(\F|_Z)^{\vee \vee} \to \T_X|_Z$. 
\end{proof}

\subsection{Bogomolov's strategy for boundedness of curves in characteristic zero} \label{section:bogomolov_strategy}

Before we prove the theorem, let us recall Bogomolov's construction in detail. Let $X$ be a smooth projective surface. For any generically immersive map $f : C \to X$ from a smooth projective curve to $X$, there is a unique ``holonomic'' lift $t_f : C \to \P_X(\Omega_X)$ defined by the map of bundles
\[ f^* : f^* \Omega_X \onto \im{f^*} \subset \omega_C \]
where $\im{f^*}$ is torsion-free of rank $1$ and hence a line bundle. This defines $t_f$ in such a way that $t_f^* \cO_{\P}(1) = \im{f^*}$. At the points $t \in C$ where $f$ is an immersion, the map $t_f$ is defined by $s \mapsto (f(s), \d{f}_s(\partial_s))$, viewing $\P := \P(\Omega_X)$ as the projectivization of the physical tangent bundle. Since $C$ is a smooth projective curve and $\P_X(\Omega_X)$ is proper, the extension over the points where $f$ is not an immersion is unique. 
\par 
Now let $\omega \in H^0(X, \Sym^{m}{\Omega_X})$ be a nonzero symmetric form. We can view
\[ \omega \in H^0(X, \Sym^{m}{\Omega_X}) = H^0(\P, \cO_{\P}(m)) \]
as a section of the line bundle $\cO_{\P}(m)$ hence its vanishing locus on $\P$ is a divisor $Z$. This represents the locus $(x, v) \in \P$ where $\omega_x(v) = 0$. Since $\P^1$ carries no symmetric differential forms, for any map $f : \P^1 \to X$ we must have $f^* \omega = 0$. By definition, this means $t_f : \P^1 \to X$ factors through $Z$. Similarly, for an elliptic curve $C$ and a map $f : C \to X$, either $f^* \omega$ is zero identically or is zero nowhere. Hence, if $V(\omega) = \{x \in X \mid \omega_x = 0\}$ contains a big divisor then at most finitely many curves may not meet $V(\omega)$. Except for a finite number of exceptions, for each elliptic curve, $f^* \omega$ is therefore zero. 
\par 
Since $\dim{X} = 2$, the divisor $Z$ is a union of surfaces. Let $S \to Z$ be a resolutions of singularities of a component of $Z$ dominating $X$. Except for finitely many rational or elliptic curves -- namely those which equal the image in $X$ of a component of $Z$ or which do not meet $V(\omega)$ -- all rational or elliptic curves on $X$ lift to a map $\wt{f} : C \to S$. Let $g : S \to X$ be the induced morphism. The quotient map $\pi^* \Omega_X \onto \cO_{\P}(1)$ defines a foliation on each component of $Z$. 

\begin{defn}
The \textit{induced holonomic foliation} on $S$ is defined as follows. Consider the pushout diagram,
\begin{center}
    \begin{tikzcd}
        g^* \Omega_X \arrow[r, two heads] \arrow[d, hook] & \iota^* \cO_{\P}(1) \arrow[d, hook]
        \\
        \Omega_{S} \arrow[r, two heads] & \pushout \cQ 
    \end{tikzcd}
\end{center}
where $\iota : S \to \P$ is the obvious map. Since $\Omega_{S} \onto \cQ$ is surjective, its dual $\F \embed \T_{S}$, with $\F = \cQ^{\vee}$, is a saturated reflexive subsheaf of generic rank $1$. Therefore, $[\F, \F] \subset \F$ because $\F$ has rank $1$ so it defines a foliation.  
\end{defn}

\begin{lemma} \label{lemma:holonomic_leaf}
Let $f : C \to X$ be a map from a smooth proper curve $C$ such that $f^* \omega = 0$. Then the lift $\wt{f} : C \to S$ is a leaf of $\F$.    
\end{lemma}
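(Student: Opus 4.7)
The plan is to pull back the defining pushout square for $\cQ$ along $\tilde{f} : C \to S$, use the universal property to construct a canonical map $\tilde{f}^{*} \cQ \to \Omega_C$ through which $d\tilde{f}$ factors, and then dualize to obtain a factorization $\T_C \to \tilde{f}^{*} \F \embed \tilde{f}^{*} \T_S$ of the tangent map of $\tilde{f}$; this is precisely the condition needed for $\tilde{f}(C)$ to be a leaf of $\F$.

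In more detail, since pullback preserves colimits, applying $\tilde{f}^{*}$ to the defining pushout exhibits $\tilde{f}^{*} \cQ$ as the pushout of $\tilde{f}^{*} \Omega_S$ and $\tilde{f}^{*} \iota^{*} \cO_{\P}(1)$ along $f^{*} \Omega_X$. We have two natural maps into $\Omega_C = \omega_C$, namely $d\tilde{f} : \tilde{f}^{*} \Omega_S \to \Omega_C$ and the tautological inclusion $\tilde{f}^{*} \iota^{*} \cO_{\P}(1) = t_f^{*} \cO_{\P}(1) \embed \omega_C$ coming from the construction of the holonomic lift $t_f = \iota \circ \tilde{f}$ recalled in \S\ref{section:bogomolov_strategy}, where $t_f^{*} \cO_{\P}(1)$ is identified with the image of the cotangent map $f^{*}$. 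Both of these restrict to the same cotangent map $f^{*} \Omega_X \to \omega_C$ along the arrows out of $f^{*} \Omega_X$: for the first, this is the chain rule applied to $g \circ \tilde{f} = f$; for the second, it is the very definition of $t_f$. The universal property of the pushout then produces the desired map $\tilde{f}^{*} \cQ \to \Omega_C$ fitting into a factorization $\tilde{f}^{*} \Omega_S \twoheadrightarrow \tilde{f}^{*} \cQ \to \Omega_C$ of $d\tilde{f}$.

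Dualizing on the smooth curve $C$ and using $\F = \cQ^{\vee}$, we get a factorization $\T_C \to \tilde{f}^{*} \F \to \tilde{f}^{*} \T_S$ of the tangent map to $\tilde{f}$. Because $f$ is generically immersive and $g \circ \tilde{f} = f$, the lift $\tilde{f}$ is itself generically immersive, so the induced map $\T_C \to \tilde{f}^{*} \F$ is nonzero; as both sides are generically rank-one torsion-free sheaves on $C$, this identifies $\T_C$ with the saturation of $\tilde{f}^{*} \F$ inside $\tilde{f}^{*} \T_S$, which is exactly the condition that $\tilde{f}(C)$ be a leaf of $\F$.

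The main obstacle is bookkeeping around torsion and saturation: one must keep track of where $\cQ$ fails to be locally free along $\tilde{f}(C)$, and verify that the two candidate maps into $\Omega_C$ literally agree on the common subsheaf $f^{*} \Omega_X$, not merely at the generic point, so that the universal property of the pushout applies on the nose. Once this is in place, the dualization and rank comparison on the curve $C$ go through routinely.
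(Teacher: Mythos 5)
Your proof is correct and follows essentially the same route as the paper: pull back the defining pushout square along $\wt{f}$, check that the differential of $\wt{f}$ and the tautological inclusion $t_f^*\cO_{\P}(1) \embed \omega_C$ agree on $f^*\Omega_X$, and invoke the universal property to factor $\d\wt{f}$ through $\wt{f}^*\cQ$. The paper leaves the final dualization and saturation step implicit, which you spell out correctly.
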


\begin{proof}
The lifted map $t_f : C \to \P$ is induced by the factorization 
\[ f^* \Omega_X \to t_f^* \cO_{\P}(1) \to \omega_C \]
using that $t_f$ factors through $S$ the following diagram shows that $\wt{f}^* \Omega_{S} \to \omega_C$ factors through $\wt{f}^* \Omega_{S} \onto \wt{f}^* \cQ$,
\begin{center}
    \begin{tikzcd}
    \wt{f}^* g^* \Omega_X \arrow[r] \arrow[d] & \wt{f}^* \iota^* \cO_{\P}(1) \arrow[d] \arrow[rdd, bend left]
    \\
    \wt{f}^* \Omega_{S} \arrow[r] \arrow[rrd, bend right] & \wt{f}^* \cQ \arrow[rd, dashed]
    \\
    & & \omega_C
    \end{tikzcd}
\end{center}
\end{proof}

From here, Bogomolov concludes by showing that each foliation can have at most finitely many rational leaves. This requires analytic input that fails in positive characteristic. In the subsequent section, we will replace this step with a detailed study of the successive Ekedahl quotients by these induced foliations.

\subsection{Proof of Theorem~\ref{thm:obstruction}}

The idea of the proof is as follows. For a rational curve $h : C \to X$ consider its lift $t_h :  C \to \P_X(\Omega_X)$. Because $h^* \omega = 0$, the lift lands inside the vanishing locus $Z$ of $\omega$ viewed as a section of $\cO_{\P}(2m)$. Either $h$ is contained in a component that does not dominate $X$, which comprises a finite number of curves that can be set aside, or it lies tangent to the holonomic foliation. We will identify these holonomic foliations with those induced on $X$ by the projections $\pi_i : C_1 \times C_2 \to C_i$ through the quotient structure. So $h$ must be tangent to, for example, the foliation induced by $\pi_2$. Applying Lemma~\ref{lem:tangent_leaf_pullback}, it arises as the pullback of $h' : C' \to X'$ along the quotient map $X \to X'$. This step is captured in Figure~\ref{fig:descent_step}. It turns out that $X'$ is also a product-quotient surface, birationally equivalent to $X^{1,0}$. We assume that $X^{1,0}$ also carries a diagonal symmetric form so we can run the argument again. Since $\deg{\pi_1 \circ h}$ or $\deg{\pi_2 \circ h}$ drops at each stage, this sets up a contradiction by infinite descent. 

\begin{figure} \label{fig:descent_step}
    \centering
    \includegraphics[trim={5cm 0 0 0},clip, angle=90, width=1\linewidth]{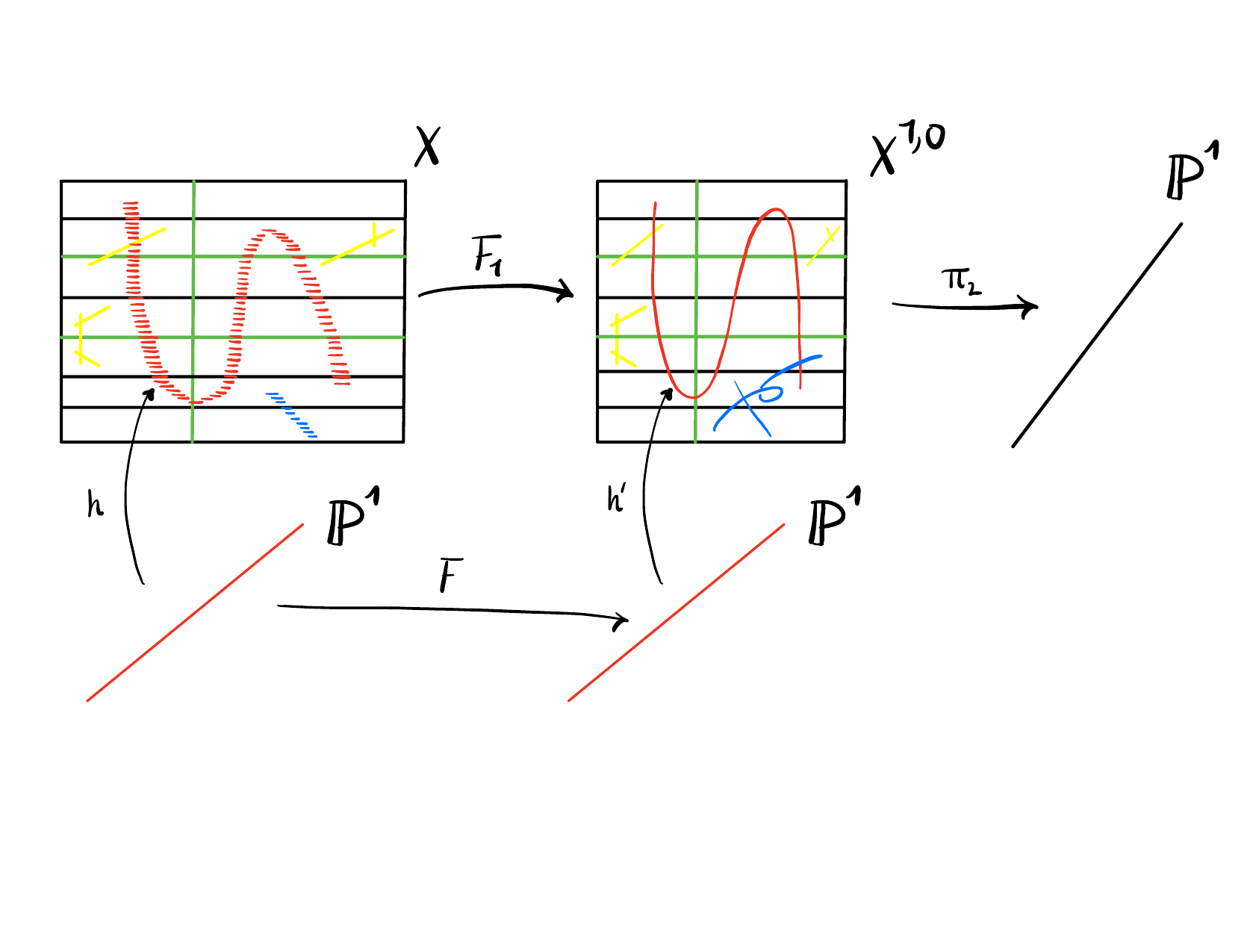}
    \caption{Descent of a rational curve (red) along $X \to X' = X^{1,0}$. Beginning with a rational curve $h : \P^1 \to X$ lifting to a curve on $S$ tangent to the foliation $\F$ -- represented here by the curve being formed from segments tangent to the fibers of $\pi_2$ -- it is the pullback of a rational curve $h'$ on $X'$ with $\deg{\pi_2 \circ h} = p \deg{\pi_2 \circ h'}$.   Exceptional divisors of the resolution (curves of type (1) in yellow) and fibers of either projection that are rational (curves of type (2) in green) are pulled back to themselves. However, rational components of $V(\omega^{1,0})$ (curves of type (3) in blue) have their pullback under partial Frobenius is again rational of $p$-times the degree. Such a pullback is denoted on $X$ (blue, marked as $p$-th order tangent to the fibers of $\pi_2$).}
    \label{fig:enter-label}
\end{figure}

\begin{proof}[Proof of Theorem]
We proceed by induction on the degree of $q : C_1 \times C_2 \rat X$ (i.e., the order of $G$). For $\deg{q} = 1$, the conclusion is obvious. 

Now choose diagonal symmetric forms $\omega^{a,b} \in H^0(X^{a,b}, \Sym^{2m}{\Omega_{X^{a,b}}})$ on each surface $X^{a,b}$. Denote by $\omega := \omega^{0,0}$ the form on $X$. Let $Z = Z(\omega) \subset \P_X(\Omega_X)$ be the vanishing locus of $\omega$ viewed as a section of $\cO_{\P}(2m)$. Denote by $V(\omega) \subset X$ the vanishing locus of $\omega$ viewed as a section of $\Sym^{2m}{\Omega_X}$. We will first set aside a finite collection of rational curves on each $X^{a,b}$ grouped into the following types:
\begin{enumerate}
    \item exceptional curves of the resolution $\pi^{a,b}$
    \item images under $q$ of the fibers of the projection maps $\pi_1$ and $\pi_2$ on $C_1 \times C_2$
    \item images of components of $Z$ not dominating $X$ 
\end{enumerate}
A curve is labeled uniquely by the first applicable type in the list (i.e., curves of type (3) are by definition not type (1) or (2)). Note that there are only finitely many curves of type (2) since otherwise $q^* \omega$ would have to be zero, but $q$ is assumed to be \etale. We run the construction of \S\ref{section:bogomolov_strategy} on $\P_X(\Omega_X)$ compatibly with the covering product surface. This gives a diagram,
\begin{center}
    \begin{tikzcd}
        Z(q^* \omega) \arrow[d, hook] \arrow[r, dashed] & Z(\omega) \arrow[d, hook]
        \\
        \P(\Omega_{C_1 \times C_2}) \arrow[d] \arrow[r, dashed] & \P_X(\Omega_X) \arrow[d]
        \\
        C_1 \times C_2 \arrow[r, "f", dashed] & X 
    \end{tikzcd}
\end{center}
We are only interested in components of $Z$ which dominate $X$ since there are at most finitely many curves that lift to the other components (since their image is a union of finitely many curves in $X$). Each of those components of $Z$ is dominated by a corresponding component of $Z(f^* \omega)$ dominating $C_1 \times C_2$. However, $q^* \omega$ is a diagonal form, meaning in local coordinates it has the form $f(z_1, z_2) \d{z_1}^m \d{z_2}^m$. Hence the components of $Z(q^* \omega)$ are those lying over components of the zero locus of $q^* \omega$ on $C_1 \times C_2$ and copies of the two sections $C_1 \times C_2 \to \P(\Omega_{C_1 \times C_2})$ corresponding to the quotients $\Omega_{C_1 \times C_2} \onto \pi_i^* \Omega_{C_i}$. Therefore, the resolution $g : S \to X$ of each component of $Z$ that dominates $X$ fits into a diagram
\[ C_1 \times C_2 \rat S \to X \]
factoring the rational map $q$. Moreover, the induced holonomic foliations are compatible so $\F \subset \T_S$ pulls back to one of the standard foliations $\pi_i^* \T_{C_i} \subset \T_{C_1 \times C_2}$. In particular, $\F$ is $p$-closed since this can be checked \etale-locally. By Galois theory, $C_1 \times C_2 \rat S$ makes $S$ birational to a product-quotient surface for some subgroup $H \subset G$. Since $q$ is generically \etale, so is $S \to X$ hence $g^* \omega$ is a global diagonal symmetric form on $S$. If $g : S \to X$ is not birational, then the degree of $C_1 \times C_2 \rat S$ is strictly less than $\deg{q}$ so the induction hypothesis applies, proving the conclusion for $S$. Since all rational curves on $X$ not of type (1) - (3) lift along one of the finitely many dominant maps $g : S \to X$, we have reduced to showing there are finitely many rational curves lifting to $g : S \to X$ for $g$ birational.
\par 
We may assume that the foliation $\F$ on $S$ pulls back to the foliation $\pi_1^* \T_{C_1} \subset \T_{C_1 \times C_2}$ on $C_1 \times C_2$ defined by the fibration $\pi_2 : C_1 \times C_2 \to C_2$. The other case is completely symmetric. The quotient of $C_1 \times C_2$ by this foliation is the geometric Frobenius on $C_1$ i.e., the map $F_1 : C_1 \times C_2 \to C_1^{(p)} \times C_2$ applying geometric Frobenius in the first coordinate. Because $G$ acts diagonally, the map $F_1 : C_1 \times C_2 \to C_1^{(p)} \times C_2$ is $G$-equivariant so it produces a map $X \to X'$ of the minimal resolutions of the quotients. In our previous notation, $X' = X^{1,0}$. Because the partial Frobenii are $G$-equivariant, there is a diagram,
\begin{center}
    \begin{tikzcd}
        C_1 \times C_2 \arrow[r, "F_1"] \arrow[d, dashed, "q"] & C_1^{(p)} \times C_2 \arrow[d, dashed, "f'"] \arrow[r, "F_2"] & C_1^{(p)} \times C_2^{(p)} \arrow[d, dashed, "q^{(p)}"]
        \\
        X \arrow[r] & X' \arrow[r, dashed] & X^{(p)}
        \\
        S \arrow[u] \arrow[r] & S / \F \arrow[u, dashed] \arrow[r] & S^{(p)} \arrow[u]
    \end{tikzcd}
\end{center}
Each of the top squares is generically Cartesian because $q$ and $q'$ are generically \etale. 
The map $S \to X \rat X'$ must factor through $S / \F$ because its tangent map kills $\F \subset \T_{S}$. This is because $X \rat X'$ is \etale-locally identified with $F_1$ and $\F$ pulls back to the foliation $\pi_1^* \T_{C_1} \subset \T_{C_1 \times C_2}$ on $C_1 \times C_2$ which induces $F_1$. Now, since both $S \to S / \F$ and $S \to X \rat X'$ have degree $p$, we conclude that $S / \F \rat X'$ is birational. 
\par 
This is relevant for the following reason. Let $h : C \to X$ be a rational curve not of type (1). Since $h^* \omega = 0$, the lift $t_h : C \to \P_X(\Omega_X)$ factors through one of the components of $Z$. As before, we restrict our attention to those curves $h$ factoring through $S \to X$. By Lemma~\ref{lemma:holonomic_leaf}, $\tilde{h} : C \to S$ is a leaf of $\F$ and therefore arises as the pullback of a curve $h' : C' \to X'$ via an application of Lemma~\ref{lem:tangent_leaf_pullback}. Because $C \to C'$ is a dominant map of curves, $C'$ is also rational.
\par 
Starting with $h : C \to X$, we have produced a ``simpler'' curve $h' : C' \to X'$ on a similar surface $X'$. Precisely, $\deg{\pi_2 \circ h'} = \frac{1}{p} \cdot \deg{\pi_2 \circ h}$ because the diagram
\begin{center}
\begin{tikzcd}
X \arrow[r] \arrow[r, bend left, "\pi_2"] & X' \arrow[r, "\pi_2"] & C_2 
\\
C \arrow[u, "h"] \arrow[r, "F"] & C' \arrow[u, "h'"]
\end{tikzcd}
\end{center}
commutes (see also Figure~\ref{fig:descent_step}). Intuitively, this is because, for each point of intersection between $h'$ and a fiber, the corresponding intersection point of $h$ on $X$ has multiplicity $p$ because the curve is made $p$-th order tangent to the fibers. Furthermore, $X' = X^{1,0}$ also carries a diagonal symmetric form by assumption, so we may repeat this process with $h'$. Restoring the symmetry of the situation, at each step either the sum $\deg{\pi_1 \circ h} + \deg{\pi_2 \circ h}$ drops or one of $\deg{\pi_i \circ h} = 0$. meaning $h$ lives inside a fiber of one of the projections. Those rational curves are already accounted for in type (1) or (2). Otherwise, as we iterate this process, the degree against one of the $\pi_i$ must decrease so eventually $h^{(i)} : C^{(i)} \to X^{a,b}$ must terminate at one of the finitely many rational curves already set aside where the process cannot continue. 
\par 
Let us take stock of the possibilities for rational curves on $X$ not currently ruled out. There are the curves of type (1) - (3) set aside earlier and all rational curves which can produced by pulling back these curves a finite number of times along a sequence of partial Frobenii. Notice that only curves of type (3) may proliferate after applying Frobenii since fibers of $\pi_i$ pull back to fibers and exceptional curves pull back to exceptional curves. Since the remaining curves have positive degree against both projections $\pi_1$ and $\pi_2$, the partial Frobenius pullback increases their degree against the ample divisor $F_1 + F_2$. Since there are only finitely many curves of type (1), and at each stage we apply either $F_1^{-1}$ or $F_2^{-1}$, there are only finitely many rational curves of each fixed degree on $X$. 
\end{proof}

\begin{rmk} \label{rmk:counting}
Explicitly, every rational curve on $X$ is either type (1) - (3) or is the pullback of a curve of type (1) on $X^{a,b}$. 
Note that, $F_1 \circ F_2 = F_X$ is simply the relative Frobenius of $X$ so any curve pulled back along both partial Frobenii becomes non-reduced. Alternatively, one can say that no reduced curve can be tangent to both foliations simultaneously. Therefore, in total the rational curves on $X$ are 
\[ \{ \text{type (1) - (3) on } X \} \cup \bigcup_{a \ge 1} (F_1^a)^{-1} \{ \text{type (1) on } X^{a,0} \} \cup \bigcup_{b \ge 1} (F_2^b)^{-1} \{ \text{type (1) on } X^{0,b} \} \]
so this analysis produces an upper bound of 
\[ \sum_{a \le \log_p{d}} \# \{ \text{type (3) on } X^{a,0}\} + \sum_{b \le \log_p{d}} \# \{ \text{type (3) on } X^{0,b}\} \]
for the number of rational curves on $X$ not of type (1) or (2) of degree $\le d$. Notice moreover, that any curve of type (3) on $X'$ not tangent to $F_2$ does pull back along $F_1$ to a reduced rational curve on $X$ with $p$ times the degree (it remains rational because $F_1^{-1}(C) \to C$ factors Frobenius $F_C : C^{(1/p)} \to C$ since $F_X^{-1}(C)$ is a non-reduced copy of $C^{(1/p)}$). If $X$ is defined over $\ol{\FF}_p$, then only finitely many $X^{a,b}$ are non-isomorphic so the collection of curves $\{ \text{type (3) on } X^{a,} \}$ has uniformly bounded size. In this case, if any such rational curve of type (3) exists, then $X$ genuinely does contain infinitely many rational curves although those curves will have exponentially growing degree. 
\end{rmk}

\subsection{Elliptic curves}

A similar argument applies to elliptic curves as well. 

\begin{theorem} \label{thm:obstruction_elliptic}
Suppose, for all $a,b \ge 0$, the surface $X^{a,b}$ carries a diagonal symmetric form whose vanishing locus in $X^{a,b}$ contains a big divisor. Then $X$ has only finitely many rational curves of fixed degree. If the vanishing loci of these diagonal forms contain a big divisor then there are only finitely many elliptic curves on $X$ of fixed degree. 
\end{theorem}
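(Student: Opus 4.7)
The plan is to repeat the proof of Theorem~\ref{thm:obstruction} for elliptic curves, with one new input at the very first step that uses the bigness hypothesis on $V(\omega)$. The rational-curve conclusion is already Theorem~\ref{thm:obstruction}, so I focus on the elliptic case.

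First, for a generically immersive $h : C \to X$ with $C$ of genus $1$, the canonical bundle $\omega_C \cong \cO_C$ is trivial, so the image of $h^* \omega$ under the natural surjection $h^* \Sym^{2m} \Omega_X \twoheadrightarrow \omega_C^{\otimes 2m} = \cO_C$ is a constant section, either identically zero or nowhere vanishing. In the latter case $h(C)$ misses $V(\omega)$ entirely, and hence misses the big divisor $D \subset V(\omega)$. Writing $n D \sim A + E$ with $A$ ample and $E$ effective, any irreducible curve disjoint from $D$ satisfies $C \cdot A > 0$ but also $C \cdot nD = 0$, forcing $C \subset \mathrm{supp}(E)$; there are only finitely many such components, and we set the corresponding elliptic curves aside.

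For all remaining elliptic $h$, the condition $\overline{h^* \omega} = 0$ is exactly what ensures that the holonomic lift $t_h : C \to \P_X(\Omega_X)$ factors through $Z(\omega)$, the same conclusion automatic for rational curves. Set aside also the finitely many elliptic curves of types (1)--(3) from the proof of Theorem~\ref{thm:obstruction}; for type (2) note that the bigness of $V(\omega)$ implicitly forces $g(C_1), g(C_2) \geq 2$, because any effective divisor on $C_1 \times C_2$ pulled back from a diagonal form is supported on fibers of one of the projections when either genus is at most $1$, and such a sum of fibers has self-intersection zero and so cannot be big. Under $g(C_i) \geq 2$, Riemann--Hurwitz bounds the elliptic-fiber images to finitely many. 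For each remaining elliptic curve, Lemmas~\ref{lemma:holonomic_leaf} and~\ref{lem:tangent_leaf_pullback} express $\tilde{h} : C \to S$ as the pullback of $h' : C' \to X'$ with $X' \in \{X^{1,0}, X^{0,1}\}$. Since $C \to C'$ is a dominant map of smooth projective curves from a genus-$1$ curve, Riemann--Hurwitz forces $g(C') \leq 1$, so $C'$ is either rational (bounded by Theorem~\ref{thm:obstruction} applied to $X'$) or elliptic, and in the latter case the hypothesis is inherited by $X'$ so the descent recurs.

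As in Theorem~\ref{thm:obstruction}, the degree $\deg(\pi_i \circ h)$ drops by a factor of $p$ at each descent step, so after $O(\log_p d)$ iterations any surviving curve of bounded degree $d$ reaches one of the finitely many exceptional classes on some $X^{a,b}$; the counting analysis of Remark~\ref{rmk:counting} then produces a quantitative bound. The main obstacle --- and the essential use of bigness --- is really the very first step: without bigness there is no way to absorb into a finite exceptional set the elliptic curves on which $\overline{h^* \omega}$ fails to vanish, and the entire descent collapses. This is why the hypothesis must be imposed on every $X^{a,b}$, so that the inductive step applies at each stage of the recursion.
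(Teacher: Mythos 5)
Your proof is correct and follows essentially the same route as the paper's: introduce the additional class of elliptic curves not meeting $V(\omega)$, use bigness of the divisor contained in $V(\omega)$ to show this class is finite, and then run the identical partial-Frobenius descent with the degree dropping by a factor of $p$ at each stage. Your extra observation that bigness of $V(\omega)$ forces $g(C_1), g(C_2) \ge 2$ --- and hence that only finitely many fiber images under $q$ can be elliptic --- is a worthwhile refinement, since the paper simply asserts finiteness of the type-(2) elliptic curves, and its justification from the rational-curve case (that $q^*\omega$ would vanish) does not by itself rule out $g(C_i) = 1$.
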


\begin{proof}
All the induction and descent steps are identical. However, for an elliptic curve $h : C \to X$, it is not automatic that $h^* \omega = 0$. What is true is that if $h^* \omega$ has a zero then $h^* \omega = 0$ identically. To force the lift $t_h : C \to \P_X(\Omega_X)$ to factor through $Z(\omega)$, we will want $h(C)$ to meet $V(\omega) \subset X$ the locus where $\omega$ is zero as a section of $\Sym^{2m} \Omega_X$. This is how the extra assumption is used. If $V(\omega)$ contains a big divisor $D \subset X$ then all but finitely many curves must meet $V(\omega)$. Therefore, we must amending the types of curves on $X^{a,b}$ set aside at the beginning. Indeed, consider the finitely many elliptic curves on $X^{a,b}$ grouped into the following types:
\begin{enumerate}
    \item exceptional curves of the resolution $\pi^{a,b}$
    \item images under $q$ of the fibers of the projection maps $\pi_i : C_1 \times C_2 \to C_i$
    \item images of components of $Z$ not dominating $X$ 
    \item curves not meeting $V(\omega)$.
\end{enumerate}
Now the argument proceeds exactly as before by forcing any curve not of type (1) - (4) above to arise as the pullback along a sequence of partial Frobenii. Now both curves of type (3) and (4) may proliferate under these operations. However, since the sum of their degrees increases at each stage by at least a factor of $p$, there can only be finitely many rational and elliptic curves in each fixed degree. 
\end{proof}

The explicit description of curves on $X$ immediately gives the following corollary.

\begin{theorem}
Suppose on each $X^{a,b}$ the diagonal symmetric forms span a line bundle  $\L \embed \Sym^{2m} \Omega_{X^{a,b}}$ whose stable base locus $\BB(\L) := \bigcap_{m \ge 0} B(\L^m)$ contains no rational (resp.\ elliptic) curves outside the exceptional locus of $\pi^{a,b}$, then $X$ contains only finitely many rational (resp.\ elliptic) curves.    
\end{theorem}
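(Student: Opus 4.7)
The plan is to combine the descent argument of Theorem~\ref{thm:obstruction} (respectively Theorem~\ref{thm:obstruction_elliptic}) with an \emph{adaptive} choice of diagonal form at each descent step. The key flexibility is that any section of $\L^k$ is a diagonal symmetric form of weight $2mk$, so the descent is valid for any $\omega$ drawn from $\bigoplus_{k \ge 1} H^0(X^{a,b}, \L^k)$.

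Let $C$ be a non-exceptional rational (resp.\ elliptic) curve on $X$. I would run the descent inductively: at stage $i$ we have a curve $C^{(i)}$ on $X^{a_i,b_i}$, non-exceptional since partial Frobenii preserve the exceptional locus (which lies over the singularities of $(C_1^{(p^{a_i})} \times C_2^{(p^{b_i})})/G$, themselves in canonical bijection across Frobenius twists). By the hypothesis, $C^{(i)} \not\subset \BB(\L)$, so some $k_i \ge 1$ and $\omega^{(i)} \in H^0(X^{a_i,b_i}, \L^{k_i})$ satisfy $C^{(i)} \not\subset V(\omega^{(i)})$; thus $C^{(i)}$ is not of type (3) with respect to $\omega^{(i)}$. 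The descent step of Theorem~\ref{thm:obstruction} (or~\ref{thm:obstruction_elliptic}) applied with this $\omega^{(i)}$ then forces $C^{(i)}$ to be of type (1), (2), (4) (in the elliptic case), or to pull back via partial Frobenius from a curve $C^{(i+1)}$ on $X^{a_i+1,b_i}$ or $X^{a_i,b_i+1}$ of strictly smaller $\pi_j$-degree.

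Since these coordinate degrees are nonnegative integers, descent terminates in finitely many steps; non-exceptionality rules out termination at type (1). By Remark~\ref{rmk:counting}, fibers of the projections pull back to themselves under partial Frobenii, and the analogous statement holds for type (4) curves in the elliptic case (finite on each $X^{a,b}$ by bigness of $V(\omega)$). Hence $C$, recovered from its terminal ancestor $C^{(N)}$ by iterated Frobenius pullbacks, is itself a fiber on $X$ (or, in the elliptic case, a fiber or a type (4) curve on $X$), landing in a finite set; combined with the finitely many exceptional curves, this yields the desired total finiteness. The main subtlety to verify is the legitimacy of choosing $\omega^{(i)}$ adaptively at each step: since the descent operation is local around the lifted curve in $\P(\Omega_{X^{a_i,b_i}})$ and depends only on the foliation induced by $\omega^{(i)}$ in a neighborhood there, altering $\omega$ between steps is harmless.
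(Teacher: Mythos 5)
Your rational-curve argument is essentially the paper's. The descent step uses only the two standard dominating components of $Z(\omega)$ and the standard foliations, neither of which depends on the particular diagonal form, so the descended curve $C^{(i+1)}$ is determined by $C^{(i)}$ alone and re-choosing $\omega^{(i)}$ at each stage is indeed harmless — this is the point you correctly isolate as the ``main subtlety.'' The paper phrases the same idea dually, shrinking the type-(3) set to the intersection of $V(\omega)$ over all diagonal forms (which is how $\BB(\L)$ enters), rather than choosing a form adaptively for each curve; the two formulations are equivalent, and for rational curves your proof is complete.

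The elliptic case has a gap in your treatment of type (4). First, type-(4) curves do \emph{not} pull back to type-(4) curves under the partial Frobenii: the proof of Theorem~\ref{thm:obstruction_elliptic} says explicitly that curves of types (3) and (4) ``may proliferate'' under these operations, and in any case the pullback is a different curve of $p$ times the degree, so a descent terminating at a type-(4) ancestor on some $X^{a,b}$ does not place the original curve in a finite set. Second, your finiteness claim for type (4) invokes bigness of $V(\omega)$, which is a hypothesis of Theorem~\ref{thm:obstruction_elliptic} but not of the present statement. What the argument actually requires is that every non-exceptional elliptic curve $C$ admit some $k$ and some $\omega \in H^0(X^{a,b}, \L^k)$ with $C \cap V(\omega) \neq \emptyset$ but $C \not\subset V(\omega)$, so that $h^*\omega$ vanishes somewhere, hence identically, and $C$ is of neither type (3) nor type (4) for that choice. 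If $\deg(\L|_C) > 0$ this follows from $C \not\subset \BB(\L)$; but if $\L|_C$ has degree zero, every section of every power of $\L$ is either identically zero or nowhere zero on $C$, and such a curve is stuck as type (3) or type (4) for every choice of form without lying in $\BB(\L)$. This case needs to be excluded or handled separately; it is the one point where the hypothesis on $\BB(\L)$ alone does not obviously suffice.
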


\begin{proof}
Since the argument can be run for any diagonal symmetric form $\omega^{a,b}$ we can restrict further types (3) (resp.\ types (3) and (4)) to only include curves in the stable base locus formed by intersecting $V(\omega^{a,b})$ running over all diagonal symmetric forms. 
The assumption means there are no rational (resp.\ elliptic) curves of type (3) (resp.\ types (3) or (4)) on any $X^{a,b}$. Hence the descent process must terminate in curves of type (1) or (2) but these are mapped to themselves under partial Frobenius preimage so $X$ has only the finite number of rational (resp.\ elliptic) curves of types (1) and (2). 
\end{proof}

In remark~\ref{rmk:counting}, we saw this result is almost sharp. If the vanishing locus of the symmetric form $\omega^{a,b}$ contains a rational (resp.\ elliptic) curve (not of type (1) or (2)) then its partial Frobenius iterates (under whichever of $F_1$ or $F_2$ it is not tangent to) form a sequence of rational (resp.\ elliptic) curves of unbounded degree. 
\par 
In characteristic zero, the same methods apply much more directly since curves tangent to the standard foliations must actually be fibers. In that case, we obtain finiteness without disallowing curves of type (3) hence recovering the main results of  \cite{lang_product_quotient} by essentially the same argument.

\section{Invariants of product-quotient surfaces}

Throughout this section, fix a finite group $G$ and two smooth projective curves $C_1, C_2$ defined on complex numbers, each equipped with a faithful action of the finite group $G$. Let $Y = (C_1 \times C_2) / G$ be the quotient by the diagonal action and $X \to Y$ the minimal resolution of the isolated cyclic quotient singularities of $X$. Write $Y^{\circ}$ for the smooth locus of $Y$, which is exactly the locus over which $C_1 \times C_2 \to Y$ is \etale. We also assume that $C_i / G \cong \P^1$; otherwise, the fundamental group of $X$ will always be infinite. These varieties fit into a diagram
\begin{center}
    \begin{tikzcd}
        & & & & X \arrow[d, "\varphi"] \arrow[rdd, bend left, "\sigma_2"] \arrow[ldd, bend right, "\sigma_1"']
        \\
        & C_1 \times C_2 \arrow[ld, "\pi_1"] \arrow[rd, "\pi_2"'] \arrow[rrr, "q", crossing over] & & & Y := (C_1 \times C_2)/G \arrow[rd, "\pi_2"'] \arrow[ld, "\pi_1"]
        \\
        C_1 \arrow[rrr, bend right] & & C_2 \arrow[rrr, bend right, crossing over] & \P^1 & & \P^1
    \end{tikzcd}
\end{center}

In this section, we compile computations of the various invariants of the surface $X$ in terms of the data $G, C_1, C_2$ and the actions. We state these results over the complex numbers for simplicity and because the proofs, especially those related to the fundamental group, rely on topological methods. Later, we will use base change and spreading out techniques to apply these results in arithmetic settings of interest.

\subsection{Finite quotient singularities} \label{section:quotient_sing}

\begin{defn}
A surface singularity is called \textit{finite quotient} if it is analytically isomorphic to the singularity produced as the quotient $\A^2 / G$ where $G \subset \GL_2(\CC)$ is a finite group. The singularity is called \textit{cyclic} if $G$ can be taken to be a cyclic group. In that case, the action is always equivalent to one generated by the diagonal matrix
\[ 
\begin{pmatrix}
    \zeta_n & 0
    \\
    0 & \zeta_n^a
\end{pmatrix} 
\]
with $\zeta_n := e^{2 \pi i / n}$ and $\gcd(a,n) = 1$. We call this a $\frac{1}{n} (1,a)$ singularity.
\end{defn}

\begin{rmk}
Note that among these, the \textit{canonical} or \textit{ADE} surface singularities are those for which we can choose $G \subset \SL_2(\CC)$. Among these, the cyclic canonical singularities are exactly those of type $\frac{1}{n+1}(1,n)$ called type $A_n$.
\end{rmk}

Cyclic quotient singularities are always klt, hence rational $\Q$-Gorenstein. For the reader's convenience, we recall some properties of the well-known Hirzebruch--Jung resolution, but the interested reader might also consult \cite[\S2.6]{Fulton:toric_varieties}.
The minimal resolution of a quotient singularity has, as its exceptional divisor, a tree of rational curves. For the cyclic quotient singularities, this tree is a chain of rational curves with self-intersections determined by the arithmetic of the fraction $\frac{n}{a}$. Let $\dbrac{b_1, b_2, \cdots, b_{\ell}}$ denote the continued fraction representation of $\frac{n}{a}$ using the convention
\[ \dbrac{b_1, b_2, \cdots, b_{\ell}} := b_1 - \frac{1}{b_2 - \frac{1}{b_3 - \cdots}} = \frac{n}{a}. \]

\begin{prop}
The exceptional divisor of the minimal resolution of a $\frac{1}{n} (1,a)$ singularity is a chain $Z_1 +  \dots + Z_{\ell}$ of smooth rational curves $Z_i$ ordered such that $Z_i \cdot Z_{i+1} = 1$ and $Z_i \cdot Z_j = 0$ for $|i - j| > 1$ and with self-intersections $Z_i^2 = -b_i$ where $b_1, \dots, b_{\ell}$ are the numbers appearing in the continued fraction representation $\frac{n}{a} = \dbrac{b_1, b_2, \cdots, b_{\ell}}$. Therefore, the dual graph of $E$ is \begin{center}
\begin{tikzpicture}
    \node[circle, fill=black, inner sep=2pt] (n1) at (0,0) {};
    \node[circle, fill=black, inner sep=2pt] (n2) at (2,0) {};
    \node[circle, fill=black, inner sep=2pt] (n3) at (5,0) {};
    \node[circle, fill=black, inner sep=2pt] (n4) at (7,0) {};
    
    \draw[thick] (n1) -- (n2);
    \draw[thick] (n3) -- (n4);
    
    \draw[thick, dashed] (n2) -- (n3);
    
    \node[above] at (n1.north) {$-b_1$};
    \node[above] at (n2.north) {$-b_2$};
    \node[above] at (n3.north) {$-b_{k-1}$};
    \node[above] at (n4.north) {$-b_k$};
\end{tikzpicture}
\end{center}
\end{prop}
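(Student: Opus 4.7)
The plan is to use toric geometry, which gives the most efficient route. The cyclic quotient singularity $\A^2/\mu_n$ with weights $(1,a)$ is the affine toric variety $U_\sigma$ associated to the cone $\sigma \subset N_\RR$ spanned by the primitive rays $u_0 := e_2$ and $u_{\ell+1} := n e_1 - a e_2$ in the standard lattice $N = \ZZ^2$. (Concretely, $\sigma^\vee \cap \ZZ^2$ is the semigroup of monomials $x^i y^j$ with $i + aj \equiv 0 \pmod n$, which is precisely the ring of $\mu_n$-invariants.) I would first verify this identification, and then use the general fact that resolutions of $U_\sigma$ correspond bijectively to smooth subdivisions of $\sigma$.

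Next, I would invoke the Hirzebruch–Jung continued fraction algorithm. Given $n/a = \dbrac{b_1, \ldots, b_\ell}$, define primitive vectors $u_1, \ldots, u_\ell \in N$ recursively by the three-term relation $u_{i-1} + u_{i+1} = b_i u_i$, with initial data chosen so that $\{u_0, u_1\}$ forms a lattice basis of $N$. An inductive check, using that each $b_i \geq 2$ and the continued-fraction identity, shows (i) each consecutive pair $\{u_{i-1}, u_i\}$ remains a lattice basis, (ii) the vectors lie on the boundary of $\sigma$ in the correct cyclic order, and (iii) the last one equals $u_{\ell+1}$. Thus $\{u_0, u_1, \ldots, u_{\ell+1}\}$ determines a smooth refinement of $\sigma$, giving a resolution $\tilde U \to U_\sigma$.

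Each new ray $u_i$ contributes a torus-invariant exceptional divisor $Z_i \cong \P^1$; the pair $\{u_{i-1}, u_i\}$ being a basis implies that $Z_{i-1}$ and $Z_i$ meet transversely in a single point, while non-adjacent $Z_i$ and $Z_j$ are disjoint. For the self-intersections, the toric relation $u_{i-1} - b_i u_i + u_{i+1} = 0$ expresses a principal divisor: there is an $m \in M$ with $\mathrm{div}(\chi^m) = Z_{i-1} - b_i Z_i + Z_{i+1} + (\text{non-exceptional})$. Intersecting with the complete curve $Z_i$ and using $Z_j \cdot Z_i = \delta_{|i-j|,1}$ for the neighbors plus the fact that $Z_i$ does not meet the non-exceptional part yields $Z_i^2 = -b_i$.

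The main obstacle is minimality: I need to rule out $(-1)$-curves in the exceptional chain. Because the convention in $\dbrac{\cdot}$ forces $b_i \geq 2$ for all $i$ (this is the defining property of Hirzebruch–Jung expansions, easily proved by induction on $n$), no $Z_i$ is a $(-1)$-curve, so the resolution is minimal. Conversely, any minimal resolution dominates this one (since one can always contract a $(-1)$-curve), so they agree. A secondary subtlety, if one does not want to quote minimality of resolutions in the rational case, is to check directly that contracting any $Z_i$ in our chain produces a non-smooth surface, which again follows from $b_i \geq 2$ by an explicit local computation.
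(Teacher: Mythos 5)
Your proof is correct and is precisely the standard toric argument: the paper does not prove this proposition itself but cites \cite[\S2.6]{Fulton:toric_varieties}, which is exactly the route you take (cone for the cyclic quotient, Hirzebruch--Jung subdivision, intersection numbers from the relations $u_{i-1}+u_{i+1}=b_iu_i$, and minimality from $b_i\ge 2$). Your final paragraph on minimality correctly matches the paper's convention that a minimal resolution is one contracting no $(-1)$-curves, so nothing further is needed.
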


\subsection{Chern numbers}

Denote the genera $g(C_i)$ by $g_i$. Following \cite[\S1]{bauer:classification}, we refer to the multi-set of \textit{types} $\tfrac{1}{n} (1,a)$ (where the types are symbols subject to the relation $\tfrac{1}{n}(1, a) = \tfrac{1}{n}(1, a')$ iff $a' \equiv a^{\pm 1} \mod n$) of singularities on $Y$ as the \textit{basket} $\cB(Y)$ of singularities. Furthermore, we define the following numerical invariants of a cyclic quotient singularity.

\begin{defn} (cf.\ \cite[Definition~1.4]{bauer:classification})
Let $\frac{n}{a} = \dbrac{b_1, \dots, b_{\ell}}$ be the continued fraction representation and $0 < a' < n$ the inverse of $a$ in $(\Z / n \Z)^{\times}$. Then define,
\begin{enumerate}
    \item $k(\tfrac{n}{a}) := -2 + \frac{2 + a + a'}{n} + \sum_i (b_i - 2)$
    \item $e(\tfrac{n}{a}) := \ell + 1 - \frac{1}{n}$
    \item $B(\tfrac{n}{a}) := 2 e(\tfrac{n}{a}) + k(\tfrac{n}{a})$
\end{enumerate}
Let $\cB$ be a basket of singularities. Then we define the invariants of the basket by summation,
\[ k(\cB) := \sum_{x \in \cB} k(x) \quad e(\cB) := \sum_{x \in \cB} e(x) \quad B(\cB) := \sum_{x \in \cB} B(x) = 3 k(\cB) + k(\cB) \]
\end{defn}

\begin{prop} \cite[Proposition 1.5]{bauer:classification} \label{prop:chern_numerics}
In the notation of this section
\[ K_X^2 = \frac{8(g_1 - 1)(g_2 - 1)}{\# G} - k(\mathcal{B}) \quad \text{ and } \quad c_2(X) = \frac{4(g_1 - 1)(g_2 - 1)}{\# G} + e(\mathcal{B}) \]
\end{prop}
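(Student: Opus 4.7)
The plan is to compute each invariant by separating a ``global'' contribution pulled back from $C_1 \times C_2$ from ``local'' contributions, one per cyclic quotient singularity of $Y$, and then matching the latter with the combinatorial definitions of $k$ and $e$ on the basket. The two geometric inputs I would use repeatedly are that $q : C_1 \times C_2 \to Y$ is finite of degree $\# G$ and \'etale in codimension one (since the $G$-action is faithful and the singularities are isolated), while the minimal resolution $\pi : X \to Y$ is an isomorphism over the smooth locus $Y^{\circ}$.

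For $c_2(X) = e(X)$, I would stratify $X$ as $X = (X \setminus E) \sqcup E$ with $X \setminus E \cong Y^{\circ}$ and apply additivity of the topological Euler characteristic:
\[ e(X) = e(Y^{\circ}) + \sum_{p \in \mathrm{Sing}(Y)} e\bigl(\pi^{-1}(p)\bigr). \]
Since $q$ restricts to an \'etale $G$-Galois cover of $Y^{\circ}$, with complement the finite fixed locus $F \subset C_1 \times C_2$, orbit--stabilizer gives $|F| = \sum_p \# G / n_p$, hence
\[ e(Y^{\circ}) = \frac{e(C_1) e(C_2) - |F|}{\# G} = \frac{4(g_1 - 1)(g_2 - 1)}{\# G} - \sum_p \frac{1}{n_p}. \]
The exceptional fiber above a $\tfrac{1}{n_p}(1, a_p)$ singularity is a chain of $\ell_p$ nodally meeting rational curves, so $e(\pi^{-1}(p)) = \ell_p + 1$. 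Summing and comparing with $e(\tfrac{n}{a}) = \ell + 1 - 1/n$ gives the claimed formula for $c_2(X)$.

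For $K_X^2$, I would decompose $K_X = \pi^* K_Y + \Delta$ with $\Delta = \sum_i a_i E_i$ supported on the exceptional locus. The projection formula and $\pi_* E_i = 0$ kill the cross term $\pi^* K_Y \cdot \Delta$, giving $K_X^2 = K_Y^2 + \Delta^2$. Because $q$ has trivial ramification divisor, $q^* K_Y = K_{C_1 \times C_2}$ as $\Q$-Cartier divisors, and pulling back yields $\# G \cdot K_Y^2 = K_{C_1 \times C_2}^2 = 8(g_1 - 1)(g_2 - 1)$, using $K_{C_1 \times C_2} = \pi_1^* K_{C_1} + \pi_2^* K_{C_2}$ on the product. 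All that remains is the local identity $\Delta^2 = -k(\tfrac{n_p}{a_p})$ at each singular point.

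The main obstacle is precisely this local continued-fraction computation. At a $\tfrac{1}{n}(1, a)$ singularity, the discrepancies $a_i \in (-1, 0] \cap \Q$ are uniquely determined by the linear system $\Delta \cdot E_j = K_X \cdot E_j = b_j - 2$ (from adjunction on each rational $E_j$ with self-intersection $-b_j$), which inverts the tridiagonal Hirzebruch--Jung intersection matrix against the right-hand side. One then evaluates $\Delta^2 = \sum_j a_j (b_j - 2)$ and must recognize the resulting rational number as $-\bigl(-2 + (2 + a + a')/n + \sum(b_i - 2)\bigr) = -k(\tfrac{n}{a})$. This is a purely combinatorial identity about continued fractions, which I would either derive directly from the toric model of the Hirzebruch--Jung resolution (as in \cite[\S 2.6]{Fulton:toric_varieties}) or cite from \cite{bauer:classification}; it is the only non-formal step of the argument.
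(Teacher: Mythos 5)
The paper does not prove this proposition; it is quoted verbatim from \cite[Proposition~1.5]{bauer:classification}, so there is no in-text argument to compare against. Your reconstruction is the standard proof and, as far as I can tell, it is the one in the cited source: additivity of the topological Euler characteristic over the stratification $X = Y^{\circ} \sqcup E$ together with $e(Y^{\circ}) = (e(C_1)e(C_2) - |F|)/\#G$ for the $c_2$ formula, and $K_X = \pi^*K_Y + \Delta$ with $(\pi^*K_Y)^2 = K_{C_1\times C_2}^2/\#G$ for the $K_X^2$ formula. All the formal steps check out: the stabilizer of a point of $C_1\times C_2$ acts with both tangent eigenvalues nontrivial (faithfulness on each factor plus characteristic zero), so $F$ is exactly the preimage of $\mathrm{Sing}(Y)$ and $|F| = \sum_p \#G/n_p$; the exceptional fiber is a chain of $\ell_p$ rational curves with $\ell_p - 1$ nodes, giving $e = \ell_p + 1$; negative definiteness of the intersection matrix pins down the discrepancies from $\Delta\cdot E_j = K_X\cdot E_j = b_j - 2$.

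The one step you leave open, the identity $\sum_j a_j(b_j-2) = -k(\tfrac{n}{a})$, equivalently $\sum_j(1+a_j)(b_j-2) = 2 - \tfrac{2+a+a'}{n}$, is genuinely the only nontrivial content, and it is a standard Hirzebruch--Jung fact (it is precisely why $k$ is defined the way it is). It is easily sanity-checked: for $A_{n-1} = \tfrac{1}{n}(1,n-1)$ all $b_j = 2$ and all $a_j = 0$, and both sides vanish since $k = 0$; for $\tfrac{1}{n}(1,1)$ there is a single $(-n)$-curve with $a_1 = -(n-2)/n$, giving $\Delta^2 = -(n-2)^2/n = -k$. Since the paper itself only cites the proposition, deferring this combinatorial identity to the toric literature or to \cite{bauer:classification} is entirely reasonable, and I see no gap.
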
 

\subsection{The fundamental group}

In this section, we suppose that the curve $C$ along with $G$-action satisfying $C / G = \P^1$ is presented, via the Riemann existence theorem, as the topological covering space associated to a map $\varphi : \pi_1(\P^1 \sm \{ p_1, \dots, p_r \}) \onto G$ sending the simple loop around $p_i$ to the monodromy $g_i = \varphi(\gamma_i)$ around $p_i$. Call the sequence of elements $g_1, \dots, g_r$ a \textit{spherical} list of generators, meaning a list that generates and satisfies $g_1 \dots g_r = 1$.  Write $m_i$ for the order of the monodromy $g_i = \varphi_i(\gamma_i)$ around $p_i$. It is more convenient to consider the finitely presented group:

\begin{defn}
Define the following group
\[ \TT(m_1, \dots, m_r) = \left< c_1, \dots, c_r \mid c_1^{m_1} = 1, \dots, c_r^{m_r} = 1, c_1 \cdots c_r = 1 \right> \]
\end{defn}

\begin{rmk}
$\TT(m_1, \dots, m_r)$ is the fundamental group of the orbifold curve $[C / G]$ which is $\P^1$ with a cyclic orbifold stabilizer at $p_i$ of order $m_i$. 
\end{rmk}

The cover $C \to \P^1$ can be interpreted as the \etale cover of orbifolds $C \to [C / G]$ and is determined by $\varphi : \TT(m_1, \dots, m_r) \onto G$ sending $c_i$ to the monodromy around $p_i$. In what follows, we have two curves $C_1, C_2$ each presented as a $G$-cover of $\P^1$. Let $m_1, \dots, m_r$ be the orders of monodromy for $C_1 \to \P^1$ and let $n_1, \dots, n_s$ be the orders of monodromy for $C_2 \to \P^1$ and let $\varphi_1 : \TT(m_1, \dots, m_r) \onto G$ and $\varphi_2 : \TT(n_1, \dots, n_s) \onto G$ be the classifying maps defined by spherical generators $g_1, \dots, g_r \in G$ and $h_1, \dots, h_s \in G$ respectively.

\begin{lemma}
The fundamental group of $Y^{\circ}$ fits into a diagram of exact sequences sequences
\begin{center}
    \begin{tikzcd}
     1 \arrow[r] & \pi_1(C_1) \times \pi_1(C_2) \arrow[r] \arrow[d, equals] & \pi_1(Y^{\circ}) \arrow[r] \arrow[d, hook] \pullback & G \arrow[d, hook, "\Delta"] \arrow[r] & 1
     \\
     1 \arrow[r] & \pi_1(C_1) \times \pi_1(C_2) \arrow[r] & \TT(m_1, \dots, m_r) \times \TT(n_1, \dots, n_s) \arrow[r] & G \times G \arrow[r] & 1
    \end{tikzcd}
\end{center}
\end{lemma}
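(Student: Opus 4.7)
My plan is to identify $\pi_1(Y^\circ)$ with the orbifold fundamental group of the quotient stack $Y^{\mathrm{orb}} := [(C_1\times C_2)/G]$, and then to read off the pullback square from the fact that $Y^{\mathrm{orb}}\to [C_1/G]\times[C_2/G]$ is the intermediate orbifold cover corresponding to the diagonal embedding $\Delta\colon G\hookrightarrow G\times G$.

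\textbf{Step 1 (isolated singularities and orbifold $\pi_1$).} The preimage of $Y^\circ$ in $C_1\times C_2$ is the open set $(C_1\times C_2)^\circ$ on which the diagonal $G$-action is free; its complement equals $\bigcup_{g\neq 1}\mathrm{Fix}(g,C_1)\times\mathrm{Fix}(g,C_2)$, which is finite because each $\mathrm{Fix}(g,C_i)$ is finite. Since $C_1\times C_2$ is a smooth real $4$-manifold, removing finitely many points does not change $\pi_1$, so $\pi_1((C_1\times C_2)^\circ)=\pi_1(C_1)\times\pi_1(C_2)$. The free quotient $(C_1\times C_2)^\circ\to Y^\circ$ is a Galois topological covering with deck group $G$. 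Endow $Y$ with the orbifold structure of $Y^{\mathrm{orb}}$; each singularity is a cyclic quotient $\C^2/H$ whose link is a lens space $S^3/H$ with $\pi_1=H$, and this matches the orbifold stabilizer $H$ at the same point. By van Kampen, the inclusion $Y^\circ\hookrightarrow Y^{\mathrm{orb}}$ induces an isomorphism $\pi_1(Y^\circ)\xrightarrow{\sim}\pi_1^{\mathrm{orb}}(Y^{\mathrm{orb}})$, since the meridian around each puncture is sent to the generator of the corresponding orbifold stabilizer, of the same order.

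\textbf{Step 2 (covers of the product orbifold).} By the Riemann existence theorem in orbifold form, each $G$-cover $C_i\to C_i/G=\P^1$ is classified, as an \'etale cover of the orbifold $[C_i/G]=[\P^1;\{p_j\}]$ with orbifold order $m_j$ at $p_j$, by $\varphi_i$. Hence $C_1\times C_2 \to [C_1/G]\times[C_2/G]$ is an \'etale Galois orbifold cover with group $G\times G$, classified by $\varphi_1\times\varphi_2\colon \TT(m_1,\dots,m_r)\times\TT(n_1,\dots,n_s)\twoheadrightarrow G\times G$, whose source is the orbifold fundamental group of the product. The factorization $C_1\times C_2\to Y^{\mathrm{orb}}\to [C_1/G]\times[C_2/G]$ exhibits $Y^{\mathrm{orb}}$ as the intermediate \'etale cover corresponding to the diagonal subgroup $\Delta(G)\subset G\times G$.

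\textbf{Step 3 (extracting both rows).} By the Galois correspondence for \'etale orbifold covers, $\pi_1^{\mathrm{orb}}(Y^{\mathrm{orb}})$ sits inside $\TT(m_1,\dots,m_r)\times\TT(n_1,\dots,n_s)$ as the preimage of $\Delta(G)$ under $\varphi_1\times\varphi_2$, which is exactly the pullback square in the claim (bottom row being the defining sequence of the product orbifold). The top row is then extracted from the pullback: the kernel of $\varphi_1\times\varphi_2$ is $\pi_1(C_1)\times\pi_1(C_2)$ and the image of the preimage of $\Delta(G)$ surjects onto $\Delta(G)\cong G$, giving the desired short exact sequence for $\pi_1(Y^\circ)$.

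\textbf{Main obstacle.} The substantive point is Step 1: the identification $\pi_1(Y^\circ)=\pi_1^{\mathrm{orb}}(Y^{\mathrm{orb}})$ for isolated cyclic quotient singularities. This is standard but relies crucially on the coincidence, special to complex dimension two, between the link fundamental group $H$ of an isolated $\C^2/H$ singularity and its orbifold stabilizer $H$; without this, removing the singular points would introduce or kill the wrong relations.
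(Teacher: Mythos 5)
Your proposal is correct and follows essentially the same route as the paper: identify $\pi_1(Y^\circ)$ with the orbifold fundamental group of $[(C_1\times C_2)/G]$, view that stack as the intermediate \'etale cover of $[C_1/G]\times[C_2/G]$ cut out by the diagonal $\Delta(G)\subset G\times G$, and read off the pullback square from the Galois correspondence. The only (minor) difference is in justifying $\pi_1(Y^\circ)\cong\pi_1^{\mathrm{orb}}$: you argue locally via van Kampen and the lens-space links of the isolated cyclic quotient points, whereas the paper compares the two $G$-cover exact sequences using that puncturing the smooth fourfold $C_1\times C_2$ at finitely many points does not change $\pi_1$ — both are standard and valid.
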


\begin{proof}
This arises from the diagram of \etale covers of orbifolds,
\begin{center}
\begin{tikzcd}
(C_1 \times C_2)^{\circ} \arrow[r, hook] \arrow[d, "\et"] & C_1 \times C_2 \arrow[r, equals] \arrow[d, "\et"] & C_1 \times C_2 \arrow[d, "\et"]
\\
Y^{\circ} \arrow[r] & \left[(C_1 \times C_2) / G \right] \arrow[r] & \left[ C_1 / G \right] \times \left[ C_2 / G \right]
\end{tikzcd}
\end{center}
where the left downward map is a $G$-cover and the right is a $(G \times G)$-cover compatible with the diagonal inclusion $\Delta : G \embed (G \times G)$. Furthermore, since the inclusion of a codimension $\ge 2$ subset of a smooth variety is an isomorphism on $\pi_1$, we conclude that the natural map
\[ \pi_1(Y^{\circ}) \to \pi_1([(C_1 \times C_2) / G]) \]
is an isomorphism because $\pi_1((C_1 \times C_2)^{\circ}) \iso \pi_1(C_1 \times C_2)$ is an isomorphism and the fibration exact sequences form a commutative diagram.
\end{proof}

\begin{lemma} \label{lemma:fund_group_torsion}
$\pi_1(X) = \pi_1(Y) = \pi_1(Y^{\circ}) / \left< \mathrm{tors} \right>$ where $\left< \mathrm{tors} \right>$ means the normal subgroup generated by elements of finite order. 
\end{lemma}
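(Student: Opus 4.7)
The statement splits into two equalities, which I would attack separately with standard techniques of geometric topology.

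\emph{First}, to obtain $\pi_1(X) = \pi_1(Y)$, I would apply Van Kampen to the decomposition $X = \mathrm{Tub}(E) \cup Y^{\circ}$, where $\mathrm{Tub}(E)$ is an analytic tubular neighborhood of the exceptional divisor. By the Hirzebruch--Jung description recalled in the preceding proposition, each connected component of $E$ is a chain of smooth rational curves, hence simply connected, and $\mathrm{Tub}(E)$ deformation retracts onto $E$. The common boundary decomposes as a disjoint union of lens spaces $L(n,a)$ -- the links of the cyclic quotient singularities -- with $\pi_1(L(n,a)) = \Z/n\Z$ the local fundamental group. Van Kampen then presents $\pi_1(X)$ as $\pi_1(Y^{\circ})$ modulo the normal closure of the images of these cyclic groups. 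Running the identical argument on $Y$, using small contractible analytic neighborhoods of the singular points in place of $\mathrm{Tub}(E)$, yields the same presentation for $\pi_1(Y)$ (with the same boundary links), making the equality tautological.

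\emph{Second}, the presentation above shows that $\ker(\pi_1(Y^{\circ}) \to \pi_1(Y))$ is normally generated by loops around the singular points, and each such loop is torsion in $\pi_1(Y^{\circ})$ (it generates a finite cyclic local fundamental group); so the kernel is contained in $\left\langle \mathrm{tors}\right\rangle$. For the reverse inclusion, I would invoke a classical theorem of Armstrong applied to the action of $\widetilde{G} := \pi_1(Y^{\circ})$ on the universal cover $M$ of $C_1 \times C_2$. By the preceding lemma, $\widetilde{G}$ is an extension of $G$ by $\pi_1(C_1) \times \pi_1(C_2)$ which acts properly discontinuously on $M$ with quotient $Y$. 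Armstrong's theorem computes $\pi_1(Y) = \widetilde{G}/H$, where $H$ is the normal subgroup of $\widetilde{G}$ generated by elements admitting a fixed point on $M$. Provided $M$ is contractible, every torsion element of $\widetilde{G}$ acts with a fixed point (a finite group acting on a contractible space fixes a point, e.g.\ by averaging a base point over its orbit), and conversely every element with a fixed point lies in a finite stabilizer, hence is torsion. Thus $H = \left\langle \mathrm{tors}\right\rangle$, giving the desired equality.

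\emph{The main obstacle} is verifying contractibility of $M$. This is automatic whenever neither $C_i$ is rational, since then each $\widetilde{C}_i$ is $\CC$ or the upper half-plane and $M = \widetilde{C}_1 \times \widetilde{C}_2$ is a product of contractible spaces; the application to Hurwitz curves of genus $14$ falls comfortably within this range. In degenerate situations where some $C_i \cong \P^1$, the cleanest fix is to replace $M$ with the orbifold universal cover of $[C_1/G] \times [C_2/G]$ and rerun the same argument; the only genuine exception, in which both orbifold curves are spherical, already forces $Y$ to be rational, in which case $\pi_1(X) = 1$ and the lemma holds trivially.
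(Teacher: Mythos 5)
Your proof is correct in substance, and for the key second equality it is the same argument the paper uses: the paper's proof simply cites Armstrong's theorem (together with Bauer--Catanese--Grunewald) for $\pi_1(Y) = \pi_1(Y^{\circ})/\left<\mathrm{tors}\right>$, and cites the standard fact for rational (klt) surface singularities, via Koll\'ar, for $\pi_1(X) = \pi_1(Y)$. What you do differently is supply the proofs behind the citations: a Seifert--Van Kampen computation with the tubular neighborhood of $E$ and the lens-space links for the first equality, and the explicit identification of Armstrong's subgroup $H$ with $\left<\mathrm{tors}\right>$ for the second. This buys self-containedness and makes transparent exactly which topological inputs are used (simple connectivity of the Hirzebruch--Jung chains, finiteness of the local fundamental groups); the paper's version buys brevity and covers more general singularities in one stroke.

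One point needs repair. Your parenthetical justification that ``a finite group acting on a contractible space fixes a point, e.g.\ by averaging a base point over its orbit'' is false as a general statement: there are fixed-point-free actions of finite groups on contractible complexes (Floyd--Richardson), and a naive average of an orbit makes no sense in a bare topological space. What saves the argument here is that the universal cover $M = \wt{C}_1 \times \wt{C}_2$ is a product of copies of the upper half-plane and/or $\CC$, hence a complete simply connected manifold of nonpositive curvature on which $\pi_1(Y^{\circ})$ acts by isometries (it sits inside a product of Fuchsian groups); the Cartan fixed point theorem (equivalently, the CAT(0) barycenter of an orbit) then gives the fixed point for any finite subgroup. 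You should phrase the step this way rather than as a claim about arbitrary contractible spaces. Note also that Armstrong's theorem only requires $M$ simply connected and the action properly discontinuous, so contractibility is not the real issue -- the nonpositive curvature needed for the fixed-point claim is. With that adjustment, and given that the paper's applications have $g(C_i) = 14$ so the degenerate spherical cases you mention at the end never arise, the argument is complete.
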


\begin{proof}
That the map $\pi_1(X) \to \pi_1(Y)$ is an isomorphism is well-known for rational surface singularities. Since $Y$ has klt singularities, it also follows from \cite[Theorem 7.8]{Kol93}. The second statement follows from the main result of \cites{Arm65, Arm68}; see also \cite[Proposition 3.4]{bauer:fundamental_groups} 
\end{proof}

In the remainder of the section, we prove a sufficient condition to check that a product-quotient surface with $C_1 = C_2$ (really, we just need that they have the same monodromy representation over $\P^1$ although the branch points may be different) is simply-connected. This will be useful in examples.
\par
From now on, we are working with $(g_1, \dots, g_r) = (h_1, \dots, h_s)$. Recall that $E := \pi_1(Y^\circ)$ is the subgroup of $\TT(m_1, \cdots, m_r)^2$ on which the two projections to $G$ agree. We want conditions on $(g_1, \dots, g_r)$ for which $E$ is normally generated by torsion elements.

\begin{defn}
We say that a list of spherical generators $(g_1, \dots, g_r)$ of a finite group $G$ \textit{extends a good presentation} if there exists a finite presentation
\[ G \cong \left< a_1, \dots, a_s \mid R \right> \]
where $R$ is a list of words in $a_1, \dots, a_s$ so that the following hold:
\begin{enumerate}
    \item $\{ a_1, \dots, a_s \} \subset \{ g_1, \dots, g_r \}$ under the isomorphism
    \item each $g_i = h_i a_{j(i)}^{\ell_i} h_i^{-1}$ for some integers $j(i), \ell_i$ and $h_i \in G$ 
    \item each word $w \in R$ is conjugate to one of the following forms
    \begin{enumerate}
        \item the words $w = a_i^{e_1} h(\ul{a}) a_j^{e_2} h(\ul{a})^{-1}$ where $h(\ul{a})$ is any word in the variables $a_1, \dots, a_s$ and $e_1, e_2$ are integers
        \item for any choice $\tilde{h}_i(\ul{a})$ of a word evaluating to $h_i$ (for some choice of $h_i$ as in (2)), the words
        \[ \prod_{i = 1}^r \tilde{h}_i(\ul{a}) a_{j(i)}^{\ell_i} \tilde{h}_i(\ul{a})^{-1} \]
    \end{enumerate}
\end{enumerate}
\end{defn}

\begin{prop} \label{prop:extends_good_presentation_normally_generated}
Suppose that $(g_1, \dots, g_r)$ are spherical generators of $G$ extending a good presentation. Then $E$ is normally generated by torsion, hence $\pi_1(X) = 1$ for the corresponding surface.  
\end{prop}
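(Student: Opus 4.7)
The plan is to let $N \trianglelefteq E$ denote the normal closure of the torsion elements of $E := \pi_1(Y^\circ)$ and show $N = E$; by Lemma~\ref{lemma:fund_group_torsion}, this gives $\pi_1(X) = E/N = 1$. Write $T := \TT(m_1,\dots,m_r)$ and $\varphi : T \onto G$ for the classifying map. The fiber product description of $E$ yields a semidirect decomposition
\[ E = \Delta(T) \ltimes (\{1\} \times \pi_1(C)), \quad \Delta(t) := (t,t), \]
since every $(u,v) \in E$ factors as $\Delta(u)\cdot (1, u^{-1}v)$ with $u^{-1}v \in \ker\varphi = \pi_1(C)$. Each $\Delta(c_i)$ is torsion of order $m_i$, so $\Delta(T) \subset N$, and it suffices to prove $\{1\} \times \pi_1(C) \subset N$. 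Since $N$ is $E$-normal and $\Delta(T)$-conjugation on the second factor is $T$-conjugation on $\pi_1(C)$, it is enough to produce elements $(1, \eta) \in N$ whose $\eta$'s normally generate $\pi_1(C)$ inside $T$.

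From the good presentation I would extract such a generating set. Choose lifts $\tilde a_j := c_{i(j)} \in T$ of $a_j$ (possible by condition (1)) and set
\[ \gamma_i := c_i^{-1} \tilde h_i(\ul{\tilde a}) \tilde a_{j(i)}^{\ell_i} \tilde h_i(\ul{\tilde a})^{-1} \in \pi_1(C), \qquad \tilde w := w(\tilde a_1,\dots,\tilde a_s) \in \pi_1(C) \text{ for } w \in R. \]
A standard Tietze argument---the $\tilde a_j$'s together with the relations ``$\gamma_i = 1$'' and ``$\tilde w = 1$'' convert the presentation of $T$ into that of $G$---shows these elements normally generate $\pi_1(C)$ inside $T$. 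To place $(1,\gamma_i)$ in $N$, observe the off-diagonal pair $(c_i,\, \tilde h_i \tilde a_{j(i)}^{\ell_i} \tilde h_i^{-1}) \in E$ has both entries torsion in $T$ (conjugates of powers of $c_k$'s), hence is torsion; multiplying on the left by $\Delta(c_i^{-1}) \in N$ yields $(1,\gamma_i) \in N$.

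For the relation generators I split on the two types in (3). In case (3a), $w = a_i^{e_1} h(\ul a) a_j^{e_2} h(\ul a)^{-1} = 1$ in $G$ rearranges as $a_i^{-e_1} = h a_j^{e_2} h^{-1}$, so the pair $(\tilde a_i^{-e_1},\, \tilde h \tilde a_j^{e_2} \tilde h^{-1}) \in E$ has both components torsion in $T$ and hence lies in $N$; multiplying on the left by $\Delta(\tilde a_i^{e_1})$ produces $(1,\tilde w) \in N$. In case (3b), using $\tilde h_i \tilde a_{j(i)}^{\ell_i} \tilde h_i^{-1} = c_i \gamma_i$ and the spherical relation $c_1 \cdots c_r = 1$ in $T$,
\[ \tilde w = \prod_{i=1}^r c_i \gamma_i = \prod_{i=1}^r s_i \gamma_i s_i^{-1}, \qquad s_i := c_1 \cdots c_i, \]
so $(1,\tilde w) = \prod_i \Delta(s_i)(1,\gamma_i)\Delta(s_i)^{-1}$ lies in $N$ since each $(1,\gamma_i) \in N$ and $N$ is $E$-normal. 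This yields $\{1\} \times \pi_1(C) \subset N$ and therefore $N = E$.

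The main obstacles are the Tietze step identifying $\{\gamma_i\} \cup \{\tilde w : w \in R\}$ as a normal generating set of $\pi_1(C)$ in $T$, and the combinatorial rewriting in case (3b). The latter is the conceptual heart: it is precisely the spherical relation $c_1 \cdots c_r = 1$ that converts the product $\prod c_i \gamma_i$ into a product of $T$-conjugates of the individual $\gamma_i$'s---explaining why the rigid form of the type (3b) relations is forced rather than cosmetic, and why an arbitrary relation could not be handled by this descent.
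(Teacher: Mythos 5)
Your proposal is correct and follows essentially the same route as the paper: reduce to the factor $\{1\}\times\ker\varphi$ (your semidirect-product decomposition is just a cleaner packaging of the paper's induction on word length), take the same normal generating set for $\ker\varphi$ from the good presentation, and exhibit each generator as a product of torsion elements of $E$ via the off-diagonal torsion pairs, with the spherical relation $c_1\cdots c_r=1$ handling the type (b) relators exactly as in the paper (your telescoping $\prod_i s_i\gamma_i s_i^{-1}$ is the same computation as the paper's product of the pairs $(c_i,\tilde h_i c_{j(i)}^{\ell_i}\tilde h_i^{-1})$).
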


\begin{proof}
We need to show that every element of $E$ is conjugate to one that can be written as a product of elements of finite order. First, we reduce to the subgroup $\{(1, k) \}_{k \in \ker{\varphi}}$. For any $(w_1, w_2) \in E$, towards induction on the word length of $w_1$, write $w_1 
= c_1 w_1'$ then $(w_1, w_2) = (c_1, c_1) \cdot (w_1', c_1^{-1} w_2)$ with both terms inside $E$. Since each $c_i \in \TT$ has finite order, the first term has finite order. Repeating this process we reduce to terms of the form $(1, k)$ with $k \in \ker{\varphi}$.
\par 
Now we use the good presentation extending $g_1, \dots, g_r$. Since $\varphi : c_i \mapsto g_i$, the kernel is normally generated\footnote{One might worry that $\ker{\varphi}$ being normally generated in $\TT$ by some set is not the same as saying that $\{(1,k)\}_{k \in \ker{\varphi}}$ is normally generated in $E$ by the same set. There is no issue because if $k = w k' w^{-1}$ for $w \in \TT$ then $(1, k) = (w, w) \cdot (1, k') \cdot (w^{-1}, w^{-1})$ and each term is in $E$.} by the relations between the $g_i$ in $G$. A generating set is $R$ plus the relations $g_i = \tilde{h}_i(\ul{a}) a_{j(i)}^{\ell_i} \tilde{h}_i(\ul{a})^{-1}$. It suffices to show that each of these relations is in the normal subgroup generated by elements of finite order in $E$. The relations in (2) give elements $(c_i, \tilde{h}_i(\ul{c}) c_{j(i)}^{\ell_i} \tilde{h}_i(\ul{c})^{-1}) \in E$. But this element has finite order $\lcm(m_i, m_{j(i)})$ since $c_i$ has order $m_i$ and the second term is conjugate to $c_{j(i)}$ which has order $m_{j(i)}$. Similarly, the words of type (a) give elements of finite order $(c_i^{-e_1}, h(\ul{c}) c_j^{e_2} h(\ul{c})^{-1})$. Finally, the words of type (b) use the relation $c_1 \cdots c_r = 1$. Using the elements of finite order 
\[ (c_i, \tilde{h}_i(\ul{c}) c_{j(i)}^{\ell_i} \tilde{h}_i(\ul{c})^{-1}) \in E \] 
and taking the product over $i$ we get $(1, w) \in E$ is a product of finite-order elements. 
\end{proof}

\begin{example}
Let $G = C_n$ be a cyclic group and $(g_1, \dots, g_r)$ a list of spherical generators and suppose that $g_1$ generates. Then taking $a_1 = g_1$ and $R = \{ a_1^n \}$ gives a good presentation since each $g_i = a_1^{e_i}$ and the relation $a_1^n$ is of the form (a).    
\end{example}

The following example will be used in the ``$D_7$ case'' of Theorem~\ref{intro:thm:example}.

\begin{example} \label{example:odd_dihedral}
Let $G = D_{n} = \left< r,s \mid r^n, s^2, (rs)^2 \right>$ be the dihedral group of order $2n$ with $n$ odd. Suppose we take a list of spherical generators $g_1, \dots, g_r$ so that $r$ and $s$ appear among the $g_i$. Up to Hurwitz moves and relabeling $r$ and $s$ by an automorphism, we may assume $g_1 = r$ and $g_2 = s$. Then $a_1 = r$ and $a_2 = s$ makes the above presentation good. 
Indeed, because $n$ is odd, every element is conjugate to $s$ or a power of $r$, so (2) is satisfied. Finally, the relations are all of the form $a_i^{e_i} = 1$, which are of type (a), except the relation $rsrs = 1$ which is actually a conjugation relation $r s r s^{-1}$ since $s = s^{-1}$ and hence also of type (a). 
\end{example}

Finally, we give an example showing the use of the ``product relation'' in a good presentation.

\begin{example} \label{example:reflection_dihedral}
Let $G = D_{n}$ using the Coxeter presentation 
\[ D_{n} = \left< f_1, f_2 \mid f_1^2, f_2^2, (f_1 f_2)^n \right>. \] 
Consider the set of spherical generators $g_1, \dots, g_{2n}$ where 
\[ g_i = 
\begin{cases}
f_1 & i \text{ odd}
\\
f_2 & i \text{ even}
\end{cases}
\]
Then we have $g_i = f_{j(i)}$ for some $j(i)$, so (2) is satisfied. Furthermore, the relations in the presentation are either $f_i^2 = 1$, which is type (a), or $(f_1 f_2)^n = 1$, which is type (b) using $\wt{h}_1 = 1$ and $\ell_i = 1$.
\end{example}

\subsection{Computing the basket of singularities} \label{section:basket}

Here we describe an algorithm for computing the basket of singularities $\cB$ of a surface $Y = (C_1 \times C_2) / G$ where the curves $C_i$ are presented as $G$-covers $f_i : C_i \to \P^1$ prescribed by spherical generators $g_1, \dots, g_r$ and $h_1, \dots, h_s$. This is described in \S1.2 of \cite{bauer:classification}; in particular, see \cite[Proposition 1.17]{bauer:classification} for additional details. In \S\ref{section:examples}, we used the \textsc{Magma} implementation of this algorithm included in \cite{bauer:classification} to actually perform calculations of the basket and its invariants. 
\par 
The fixed points of the actions $G \acts C_i$ lie in the fibers of $f_i : C_i \to \P^1$ over the ramification points $p_1, \dots, p_r$ and $p_1', \dots, p_s'$ respectively. The fiber $(f_1 \times f_2)^{-1}(p_i \times p_j')$ is identified with $G / \left< g_i \right> \times G / \left< h_j \right>$. Therefore, the fiber of the stack $[C_1 \times C_2 / G] \to C_1/G \times C_2/G = \P^1 \times \P^1$ over $(p_i, p_j')$ is identified with the groupoid
\[ [(G/\left< g_i \right> \times G/\left<h_j\right>) / G] \cong \bigsqcup_{r \in G / \left<g_i, h_j \right>} B (\left< g_i \right> \cap r \left< h_j \right> r^{-1})  \]
so these groups $\left< g_i \right> \cap r \left< h_j \right> r^{-1}$ form the stabilizers of the $[G : \left< g_i, h_j \right>]$ singular points of $Y$ in the fiber over $p_i \times p_j'$. What remains is to compute the type of the singularities corresponding to the above stabilizers. To do this, we need a local model. Let $(x, y) \in C_1 \times C_2$ be the point in the fiber over $p_i \times p_j'$ corresponding to $(1,1) \in G/\left< g_i \right> \times G / \left< h_j \right>$. This means $g_i$ fixes $x$ and $h_j$ fixes $y$. Since the monodromy element $g_i$ acts by lifting the simple clockwise closed loop downstairs and $f_i$, locally near $x$, takes the form  $z \mapsto z^{\ord(g_i)}$ we see that $g_i$ acts by $e^{\frac{2 \pi i}{\ord(g_i)}}$ on $T_x C_1$. Similarly, $h_j$ acts by $e^{\frac{2 \pi i}{\ord(h_j)}}$ on $T_y C_2$. Now the point $(1,r) \in G/\left< g_i \right> \times G / \left< h_j \right>$ corresponding to $(x, r \cdot y) \in C_1 \times C_2$ has stabilizer generated by $g_i^{\gamma}$ where $\gamma$ is the smallest positive integer such that $g_i^{\gamma} \in r \left< h_j \right> r^{-1}$. Write $g_i^{\gamma} = r h_j^{\delta} r^{-1}$ for some $\delta \le \ord(h_j)$. Therefore $g_i^{\gamma}$ acts on $T_x C_1 \oplus T_{r \cdot y} C_2$ via $(e^{\frac{2 \pi i \gamma}{\ord(g_i)}}, e^{\frac{2 \pi i \delta}{\ord(h_j)}})$. The quotient is a type $\frac{1}{n} (1, a)$ singularity where 
\[ n = \frac{\ord(g_i)}{\gamma} \quad \text{and} \quad a = \frac{\ord(g_i) \delta}{\ord(h_j) \gamma} = \frac{\delta}{\gcd(\delta, \ord(h_j))}. \]

\begin{rmk} \label{rmk:baseket_depends_only_conjugacy_multiset}
Notice that the basket of singularities depends only on the two multisets of conjugacy classes $\Cl(g_1), \dots, \Cl(g_r)$ and $\Cl(h_1), \cdots \Cl(h_s)$, not on the individual elements of either list of spherical generators or the order in which they appear in the list. This is because the singularities over $p_i \times p_j'$ are determined by the set of pairs $(\gamma, \delta)$ appearing in relations of the form $g_i^{\gamma} = r h_j^{\delta} r^{-1}$ as we vary over $r$. Conjugating $g_i$ and $h_j$ by elements $\ell$ and $\ell'$ respectively simply permutes the value of $r$ for each pair $(\gamma, \delta)$ since 
\[ (\ell g_i \ell^{-1})^{\gamma} = \ell g_i^{\gamma} \ell^{-1} = \ell r h_j^{\delta} r^{-1} \ell^{-1} = (\ell r \ell'^{-1}) (\ell' h_j^{\delta} \ell'^{-1}) (\ell r \ell'^{-1})^{-1} \]
so conjugation has the effect of permuting the set of $r$ via $r \mapsto \ell r \ell'^{-1}$ while preserving the multiset of pairs $(\gamma, \delta)$. 
\end{rmk}

\subsection{Supersingularity and \etale cohomology}

One advantage of product-quotient surfaces is that their cohomology is nicely controlled by the cohomology of the curves. In particular, working over an algebraically closed field of characteristic $p > 0$, the following lemma gives a very simple criterion for (Shioda) supersingularity.

\begin{lemma} \label{lem:supersingular_domination}
Let $f : X \rat Y$ be a dominant rational map of smooth proper surfaces. If $X$ is (Shioda) supersingular then so is $Y$.
\end{lemma}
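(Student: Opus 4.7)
The plan is to reduce to the case of a dominant generically finite \emph{morphism} of smooth proper surfaces, and then exploit the fact that such a morphism admits a ``Gysin'' retract $\tfrac{1}{\deg f} f_*$ of $f^*$.

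First I would eliminate the ``rational'' in ``rational map.'' By Hironaka's resolution of indeterminacy (or, since these are surfaces, by iterated blowups of the indeterminacy locus), choose $\pi : \wt{X} \to X$ a composition of point blowups so that $\wt{f} := f \circ \pi : \wt{X} \to Y$ is a morphism. Blowing up a smooth surface at a closed point adds a single exceptional $\P^1$, so $H^2_{\et}(\wt{X}, \Q_\ell(1)) \cong H^2_{\et}(X, \Q_\ell(1)) \oplus \Q_\ell$ as Frobenius modules, with the extra summand generated by the (algebraic) class of the exceptional curve on which Frobenius acts trivially (after the Tate twist). The odd cohomology is unchanged. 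Hence, inductively, $\wt{X}$ is (Shioda) supersingular whenever $X$ is. We may therefore replace $f$ by $\wt{f}$ and assume $f$ is a morphism. Since $X, Y$ are smooth proper of the same dimension and $f$ is dominant, $f$ is generically finite; set $d := \deg f$.

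Now the projection formula together with Poincar\'e duality gives $f_* f^* = d \cdot \mathrm{id}$ on $H^*_{\et}(Y, \Q_\ell)$, so $f^* : H^i_{\et}(Y, \Q_\ell) \to H^i_{\et}(X, \Q_\ell)$ is injective, and this inclusion is Frobenius-equivariant. Consequently, every Frobenius eigenvalue of $Y$ on $H^i$ occurs as a Frobenius eigenvalue of $X$ on $H^i$; if all the latter are $q^{i/2}$ times a root of unity, so are the former. This handles the ``Frobenius eigenvalue'' notion of supersingularity.

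For the Shioda version, let $\alpha \in H^{2i}_{\et}(Y, \Q_\ell(i))$. By hypothesis $f^* \alpha \in H^{2i}_{\et}(X, \Q_\ell(i))$ is a $\Q_\ell$-linear combination of classes of algebraic cycles on $X$. Pushforward of algebraic cycles under a proper morphism is algebraic, so $f_*(f^* \alpha)$ is a $\Q_\ell$-linear combination of cycle classes on $Y$. But $f_* f^* \alpha = d \cdot \alpha$, and $d \neq 0$, so $\alpha = \tfrac{1}{d} f_*(f^* \alpha)$ is itself in the $\Q_\ell$-span of algebraic cycle classes. Hence $Y$ is Shioda supersingular.

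The only real step worth checking carefully is the behavior of cohomology under blowup; everything else is formal manipulation with the projection formula. Note that we never needed $f$ to be separable: even if $f$ is purely inseparable, $f^*$ is still injective with left-inverse $\tfrac{1}{d} f_*$, so the argument goes through uniformly.
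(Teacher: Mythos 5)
Your proof is correct and follows essentially the same route as the paper: resolve the indeterminacy by blowups (noting that each blowup only adds an algebraic $\Q_\ell(-1)$ summand to $H^2$, so supersingularity is preserved), then use injectivity of $f^*$ for the eigenvalue statement and the projection formula $f_*f^* = d\cdot\mathrm{id}$ together with properness of cycle pushforward for the Shioda statement. The paper's proof is just a more compressed version of the same argument.
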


\begin{proof}
Resolving the map $\wt{f} : \wt{X} \to Y$, we get an inclusion $H^i_{\et}(Y, \Q_{\ell}) \to H^i_{\et}(\wt{X}, \Q_{\ell})$. Moreover, the pullback map $H^i_{\et}(X, \Q_{\ell}) \to H^i_{\et}(\wt{X}, \Q_{\ell})$ is an isomorphism for $i \neq 2$ and for $i = 2$ adds a copy of $\Q_{\ell}(-1)$ for each blowup. Hence, assuming $X$ is supersingular, we conclude the same for $\wt{X}$ and also $Y$. If $X$ is Shioda supersingular, then $H^2_{\et}(\wt{X}, \Q_{\ell}(1))$ is spanned by algebraic cycles so, using the push-pull formula, $H^2_{\et}(Y, \Q_{\ell}(1))$ is spanned by the pushforward of the cycle classes on $\wt{X}$ spanning $\im{(H^2_{\et}(Y, \Q_{\ell}) \to H^2_{\et}(\wt{X}, \Q_{\ell}))}$.
\end{proof}

Returning to the case that $X$ is a product-quotient surface defined over a number field $K$, the dominant rational map $C_1 \times C_2 \rat X$ immediately provides the following corollary.

\begin{cor} \label{cor:curves_supersingular}
If $C_1$ and $C_2$ have good supersingular reduction at some prime $\p \subset K$ and $X$ has good reduction at $\p$, then $X_{\p}$ is Shioda supersingular.
\end{cor}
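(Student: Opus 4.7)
The plan is to deduce this corollary directly from Lemma~\ref{lem:supersingular_domination} once we establish the auxiliary fact that a product of two supersingular curves is itself a Shioda supersingular surface. Since $X$, $C_1$, and $C_2$ all have good reduction at $\p$, the dominant rational map $q : C_1 \times C_2 \dashrightarrow X$ specializes to a dominant rational map $(C_1)_\p \times (C_2)_\p \dashrightarrow X_\p$ of smooth proper surfaces over $\ol{\FF}_p$. By Lemma~\ref{lem:supersingular_domination}, it therefore suffices to prove that the surface $S := (C_1)_\p \times (C_2)_\p$ is Shioda supersingular.

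For this, I would apply the K\"unneth formula. Writing $E_i := (C_i)_\p$ for brevity, we have a canonical decomposition
\[ H^2(S, \Q_\ell(1)) \,=\, H^2(E_1, \Q_\ell(1)) \,\oplus\, \bigl(H^1(E_1, \Q_\ell) \otimes H^1(E_2, \Q_\ell)\bigr)(1) \,\oplus\, H^2(E_2, \Q_\ell(1)). \]
The outer two summands are each one-dimensional and spanned by the cycle classes of the vertical and horizontal fibers $E_1 \times \{*\}$ and $\{*\} \times E_2$ respectively. Thus the whole question collapses to showing that the middle K\"unneth piece $M$ is spanned by algebraic cycles on $S$.

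The main arithmetic input, and the step where the supersingular hypothesis is essentially used, is Tate's theorem on homomorphisms of abelian varieties over finite fields. Let $J_i := \Jac(E_i)$. Poincar\'e duality for $E_1$ identifies $M$ with $\mathrm{Hom}(T_\ell J_1, T_\ell J_2) \otimes \Q_\ell$. Descending $E_1$ and $E_2$ (together with $J_1$, $J_2$) to a common finite subfield of definition $\FF_q \subset \ol{\FF}_p$ and then replacing $q$ by a sufficiently large power, the supersingularity assumption forces all eigenvalues of the geometric Frobenius on $M$ --- which are a priori roots of unity by hypothesis --- to become equal to $1$. Tate's theorem for abelian varieties over $\FF_q$ then produces a surjection
\[ \mathrm{Hom}_{\FF_q}(J_1, J_2) \otimes \Q_\ell \,\twoheadrightarrow\, M^{\mathrm{Frob}_q = 1} \,=\, M, \]
and each homomorphism $f : J_1 \to J_2$ yields, via pullback of its graph along Abel--Jacobi maps $E_i \hookrightarrow J_i$, a divisorial correspondence on $E_1 \times E_2 = S$ whose cycle class lies in $M$. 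Combined with the outer K\"unneth summands handled above, this shows $H^2(S, \Q_\ell(1))$ is spanned by algebraic cycles, as required. The principal obstacle is thus purely the middle K\"unneth piece, and it is overcome by transporting the question to one about homomorphisms of Jacobians and applying Tate; no further work is needed beyond invoking Lemma~\ref{lem:supersingular_domination}.
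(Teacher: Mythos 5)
Your proof is correct and follows essentially the same route as the paper: specialize the dominant rational map, reduce to the product $(C_1)_\p \times (C_2)_\p$ via Lemma~\ref{lem:supersingular_domination}, and handle the product by K\"unneth plus the Tate conjecture for products of curves. The only difference is that where the paper simply cites Tate's Theorem 5.2(a), you unpack its proof (identifying the middle K\"unneth piece with $\mathrm{Hom}(V_\ell J_1, V_\ell J_2)$ and invoking Tate's theorem on homomorphisms of abelian varieties), which is a correct but not genuinely distinct argument.
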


\begin{proof}
The Tate conjecture is known for products of curves by \cite[Theorem 5.2(a)]{Tate}. Hence, by the K\"{u}nneth formula, $C_1 \times C_2$ is Shioda supersingular if $C_1$ and $C_2$ are each supersingular. Then we conclude by Lemma~\ref{lem:supersingular_domination}. 
\end{proof}

In fact, we know the Tate conjecture for $X$ at all primes of good reduction by \cite[Theorem 5.2(b)]{Tate}. 

\section{Existence of symmetric differential forms}

In this section, we largely follow the strategy used in \cite{lang_product_quotient} to construct diagonal symmetric forms which the authors use to prove quasi-hyperbolicity results in characteristic zero. 
\par 
In a different direction, \cite{RR13} developed a general method based on orbifold techniques for producing symmetric forms. In particular, they proved the following.

\begin{theorem} \cite[Theorem 1]{RR13}
Let $\pi : X \to Y$ be the minimal resolution of a surface $Y$ with ADE singularities and suppose that
\[ \tfrac{1}{2} (K_X^2 - c_2(X)) + \sum_{n \ge 1} \left[ (n+1) (a_n + d_n + e_n) - \frac{a_n}{n+1} - \frac{d_n}{4(n-2)} \right] - \frac{e_6}{24} - \frac{e_7}{48} - \frac{e_8}{120} > 0 \]
where $a_n, d_n$, and $e_n$ are the numbers of $A_n, D_n$, and $E_n$ singularities respectively. Then $\Omega_X$ is big. In particular, $X$ carries a nonzero symmetric form.
\end{theorem}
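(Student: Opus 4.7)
The strategy is to establish bigness of $\Omega_X$ by lower-bounding $h^0(X, \Sym^m \Omega_X)$ of order $m^3$ via an orbifold Riemann--Roch computation on $Y$. First, I would introduce an auxiliary subsheaf $\cS^m \subseteq \Sym^m \Omega_X$ of \emph{orbifold symmetric differentials}: at each ADE singularity $p\in Y$, modeled analytically as $\A^2/G_p$ with $G_p\subset\SL_2(\CC)$, define $\cS^m$ locally to be the pushforward to $X$ of the $G_p$-equivariant descent of $\Sym^m \Omega_{\A^2}$. The inclusion $\cS^m \hookrightarrow \Sym^m\Omega_X$ is generically an isomorphism, so producing sections of $\cS^m$ is enough, and the advantage is that $\cS^m$ has ``extra'' sections coming from $G_p$-invariant polynomials whose derivatives need not descend to the minimal resolution.

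Next I would compute $\chi(X,\cS^m)$. The leading $m^3$-coefficient from the smooth locus is the usual rank-two answer $\tfrac{m^3}{6}(K_X^2 - c_2(X))$, obtained from Hirzebruch--Riemann--Roch applied to $\Sym^m\Omega_X$ (using $c_1(\Sym^m\Omega_X)=\binom{m+1}{2}K_X$ and the rank-two Chern identities). Each ADE singularity modifies this leading coefficient by a local contribution that is a Molien-series count of $G_p$-invariant homogeneous differentials of weight $\sim m$ in two variables: for an $A_n$ singularity (cyclic of order $n+1$) the correction has leading coefficient $(n+1)-\tfrac{1}{n+1}$; for a $D_n$ singularity (binary dihedral of order $4(n-2)$) one gets $(n+1)-\tfrac{1}{4(n-2)}$; and $E_6, E_7, E_8$ (binary tetrahedral, octahedral, icosahedral, of orders $24, 48, 120$) contribute the terms $\tfrac{1}{24}, \tfrac{1}{48}, \tfrac{1}{120}$ in the formula. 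Summing these reproduces (up to a universal positive scalar) the expression on the left-hand side of the stated inequality.

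The final step is to bound $h^2(X, \cS^m) = O(m^2)$ so that $h^0\geq \chi - O(m^2) \geq c m^3$ for some $c>0$. By Serre duality and the rank-two identity $(\Sym^m \Omega_X)^{\vee}\cong \Sym^m \Omega_X\otimes K_X^{-m}$, we have
\[ h^2(X,\Sym^m\Omega_X) = h^0\bigl(X,\, \Sym^m\Omega_X\otimes K_X^{1-m}\bigr), \]
and the analogous formula holds for $\cS^m$ with appropriate orbifold canonical twist. For $m\gg 0$ this twist is by a very negative line bundle, so the sections are controlled by iterating the Bogomolov vanishing for symmetric tangent sheaves tensored with line bundles whose degree is eventually negative against every curve. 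The main obstacle is making this $h^2$-bound uniform without assuming $X$ is of general type \emph{a priori}: the orbifold corrections in the hypothesis are precisely what compensate when $K_X^2-c_2(X)$ itself is small or negative, and pushing the vanishing argument through in that regime requires either an orbifold Bogomolov-type stability statement or a direct Riemann--Roch on $\Sym^m\T_X\otimes K_X^{1-m}$. Once $h^2 = O(m^2)$ is in hand, the estimate $h^0(X,\cS^m)\gtrsim m^3$ follows, and since $\rank(\Sym^m\Omega_X) = m+1$ on a surface, this growth rate is equivalent to bigness of $\Omega_X$; a nonzero symmetric form exists in particular.
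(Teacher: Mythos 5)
You should first note that the paper does not prove this statement at all: it is quoted from \cite[Theorem 1]{RR13} solely as a point of comparison for the author's own criterion (Corollary~\ref{cor:numerical_criterion}), which is proved by an entirely different and more elementary route (bigness of the explicit line bundle $\omega_X(-E)$ inside $\Sym^2\Omega_X$ via $(K_X-E)^2>0$). So there is no in-paper proof to compare against, and your proposal has to be judged as a reconstruction of the external result. In outline it does follow the Roulleau--Rousseau strategy: orbifold symmetric differentials on $Y$, an orbifold Riemann--Roch asymptotic whose leading $m^3$-coefficient is a positive multiple of the displayed quantity, and an $h^2$ bound so that $h^0(\Sym^m\Omega_X)\gtrsim m^3$, which for a rank-two bundle on a surface is indeed equivalent to bigness of $\Omega_X$.

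That said, the two places where the real content lies are asserted rather than proved. First, your sheaf $\mathcal{S}^m$ is only a subsheaf of $\Sym^m\Omega_X$ if $G_p$-invariant symmetric differentials near an ADE point extend to \emph{regular} sections of $\Sym^m\Omega_X$ across the exceptional divisor of the minimal resolution; this is an extension theorem of precisely the kind the paper itself labors over (Theorem~\ref{thm:miyaoka_extension}, Lemma~\ref{lem:pole_order}), not something that follows from writing down a pushforward, and without it the inclusion $\mathcal{S}^m\hookrightarrow\Sym^m\Omega_X$ on which everything rests is unjustified. Second, the $h^2$ step: you correctly reduce via Serre duality and the rank-two identity to bounding $h^0(X,\Sym^m\T_X\ot K_X)$, but you then concede you cannot control it without a general-type hypothesis. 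That concession is the gap: the statement as quoted here suppresses the general-type hypothesis present in the source, and under that hypothesis Bogomolov's vanishing gives $H^0(X,\Sym^m\T_X\ot K_X)=0$ for $m\ge 3$ outright; without it your argument does not close. Finally, the Molien-series computation producing the specific constants $(n+1)$, $\tfrac{1}{n+1}$, $\tfrac{1}{4(n-2)}$, $\tfrac{1}{24}$, $\tfrac{1}{48}$, $\tfrac{1}{120}$ is the quantitative heart of the theorem and is left entirely to the reader. As written, your proposal is a plausible roadmap, not a proof.
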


These results are not adequate for our purposes for two reasons. First, their method relies on $Y$ having only canonical singularities (so that $\pi$ is crepant), but the surfaces we will analyze have, for example, $\tfrac{1}{3}(1,1)$ singularities which are not canonical. Second, the symmetric forms produced this way need not be diagonal. 

\subsection{Constructing a bundle of diagonal forms}

Let $E$ be the exceptional divisor for $\pi : X \to Y$. In this section, we construct a map $\gamma : \omega_X(-E) \to \Sym^2 \Omega_X$ so that $q^* \im{\gamma}$ is diagonal. To construct this map, we need a few lemmas.

\begin{lemma} \label{lem:klt_push_pull_canonical}
Let $Y$ be a variety with klt singularities and $\pi : X \to Y$ a resolution of singularities. Then $\im( (\pi^* \pi_* \omega_X)^{\vee \vee} \to \omega_X) = \omega_X(\floor{- \sum a_i E_i})$ where the $E_i$ are the exceptional divisors of $\pi$ and the $a_i$ are the associated discrepancies.
\end{lemma}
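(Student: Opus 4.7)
The plan begins by invoking that klt singularities are rational, so that $\pi_* \omega_X = \omega_Y$ as reflexive sheaves on $Y$. Hence, the map in the lemma becomes the canonical map $\alpha : L \to \omega_X$ with $L := (\pi^* \omega_Y)^{\vee\vee}$. Since $X$ is smooth, $L$ is reflexive of rank one, hence a line bundle. Over $X_0 := X \setminus \bigcup_i E_i$, the map $\alpha$ is an isomorphism, so $\alpha$ is an injection of line bundles, identifying $L$ with a sub-line-bundle $\omega_X(-D)$ for an effective integral divisor $D = \sum_i d_i E_i$ supported on $\bigcup_i E_i$. The goal becomes to prove $d_i = \lceil a_i \rceil$, equivalently $-D = \lfloor -\sum_i a_i E_i\rfloor$.

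The assertion is local on $Y$, so fix a singular point $y$ and let $m$ be its local index, the minimum positive integer such that $\omega_Y^{[m]} := (\omega_Y^{\otimes m})^{\vee\vee}$ is invertible near $y$. The discrepancy relation $m K_X = \pi^*(mK_Y) + \sum_i (m a_i) E_i$ (now an equation of Cartier divisors, with $m a_i \in \Z$) gives a canonical isomorphism of line bundles
\[ \pi^* \omega_Y^{[m]} \;\cong\; \omega_X^{\otimes m}\bigl(-\textstyle\sum_i (m a_i)\, E_i\bigr) \]
on $\pi^{-1}(U)$ for a small neighborhood $U \ni y$. The line bundle $L^{\otimes m}$ also restricts to $\omega_{X_0}^{\otimes m}$ over $X_0$, so $L^{\otimes m} = \omega_X^{\otimes m}(-mD)$. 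The natural map $L^{\otimes m} \to \pi^*\omega_Y^{[m]}$, obtained by reflexivizing $\pi^*\omega_Y^{\otimes m} \to \pi^*\omega_Y^{[m]}$, is an injection of line bundles on $X$, and tracking this inclusion identifies $D$.

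Concretely, in formal coordinates $(s, t)$ at a generic point of $E_i$ with $E_i = (t = 0)$, a local generator of $\omega_Y^{[m]}$ pulls back to a unit multiple of $t^{-m a_i}(ds \wedge dt)^{\otimes m}$. A local section of $\omega_Y$, pulled back to $X$, is an $\struct{X}$-multiple of a ``formal $m$-th root'' $t^{-a_i}(ds \wedge dt)$; for such a section to land in the line bundle $\omega_X \supset L$, it must be multiplied by at least $t^{\lceil a_i \rceil}$. Consequently, along the generic point of $E_i$, sections of $L$ are generated by those sections of $\omega_X$ vanishing to order $\lceil a_i \rceil$ along $E_i$, yielding $d_i = \lceil a_i \rceil$. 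Compatibility at the intersection points $E_i \cap E_j$ is verified by an analogous two-variable formal computation in coordinates $(t_1, t_2)$ with $E_i = (t_i=0)$, where the vanishing conditions along the two components decouple by additivity of discrepancies.

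The principal obstacle is making the ``$m$-th root'' description rigorous within the framework of reflexive sheaves and verifying the patching at crossings of exceptional divisors. Both follow from the standard local toric structure of cyclic (or more general finite) quotient singularities together with the fact that a reflexive rank-one sheaf on a smooth surface is determined by its codimension-one behavior, but the translation of the $\Q$-valued discrepancies $a_i$ into integer orders of vanishing via the ceiling function is where the bookkeeping is most delicate.
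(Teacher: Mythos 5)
Your proposal follows essentially the same route as the paper: klt implies rational, so $\pi_*\omega_X = \omega_Y$ is reflexive, and the lemma reduces to identifying the reflexive pullback of $\omega_Y$ inside $\omega_X \otimes k(X)$ by passing to a Cartier power and doing floor/ceiling bookkeeping on orders of vanishing along each $E_i$ (the paper packages this as the identity $\pi^{[*]}\struct{Y}(D) = \struct{X}(\lfloor \pi^* D\rfloor)$ for $\Q$-Cartier Weil divisors). Two things to repair. First, there is a sign inconsistency: from your own isomorphism $\pi^*\omega_Y^{[m]} \cong \omega_X^{\otimes m}(-\sum (ma_i)E_i)$, a local generator of $\omega_Y^{[m]}$ pulls back to a unit times $t^{+ma_i}(ds\wedge dt)^{\otimes m}$, not $t^{-ma_i}(ds\wedge dt)^{\otimes m}$ — for instance, for the $\tfrac{1}{3}(1,1)$ point one has $a = -1/3$ and the pullback of $(dx\wedge dy)^{\otimes 3}$ acquires a simple pole along $E$. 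Taken literally, your formula leads to $d_i = \lceil -a_i\rceil$ (hence $d_i = 1$ at a typical non-canonical klt surface point), which contradicts both your stated answer $d_i = \lceil a_i\rceil$ and the isomorphism the paper deduces for minimal resolutions; with the corrected sign the inequality $\ord_{E_i}(\pi^{[*]}s) \ge \lceil a_i\rceil$ falls out as you intend. Second, that inequality is only the containment $\im \subseteq \omega_X(\lfloor -\sum a_iE_i\rfloor)$; the reverse containment requires a section of $\omega_Y$ whose pullback has order exactly $\lceil a_i\rceil$ along $E_i$, and the constraint that $f$ in $s^{[m]} = f\sigma$ be realized by an actual $m$-th power in $\omega_Y^{[m]}$ is nontrivial. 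You defer this to the local toric structure of quotient singularities, which is legitimate but silently restricts the argument to surfaces (klt surface points are quotient singularities), whereas the lemma is stated for arbitrary klt varieties and the paper's argument for this direction does not use the surface hypothesis.
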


\begin{proof}
By \cite[Theorem 3 + Theorem 4]{kovacs_rational}, $Y$ has rational singularities and $\pi_* \omega_X = \omega_Y$ which is a reflexive sheaf. Then we use the defining relation
\[ K_X = \pi^* K_Y + \sum a_i E_i \]
and the fact that  $\pi^{[*]} \struct{Y}(D) = \struct{X}(\floor{\pi^* D})$ where $\pi^{[*]}$ is the reflexive pullback, $D$ is a Weil $\Q$-Cartier divisor, and $f^* D$ is the pullback as a $\Q$-Cartier divisor. To prove this, let $\ell D$ be Cartier. Then any local section $s$ of $\struct{Y}(D)$ has $s^{[\ell]}$ a local section of $\struct{Y}(\ell D)$ and hence $\pi^* s^{[\ell]} = (\pi^{[*]} s)^\ell$ is a section of $\pi^* \struct{Y}(\ell D)$ so $\ell \div(\pi^{[*]} s) + f^* \ell D \ge 0$. Likewise, if $\ord_E(\pi^{[*]} s) \ge \ord_E(\pi^* D) + 1$ then dividing $s$ by a local parameter for $E$, we can ensure that $\struct{Y}(D)$ has local sections generating $\struct{X}(\floor{\pi^* D})$.
\end{proof}

In particular, if $\pi : X \to Y$ is the minimal resolution of a surface with klt singularities, then the discrepancies satisfy $-1 < a_i \le 0$ so we conclude that $(\pi^* \pi_* \omega_X)^{\vee \vee} \to \omega_X$ is an isomorphism. We also need Miyaoka's extension theorem \cite{miyaoka}, which, in the modern version due to Greb, Kebekus, and \Kovacs, says that reflexive tensor-forms on $Y$ extend to logarithmic tensor-forms on $X$. We use the notation $(-)^{[n]}$ and $\Sym^{[n]}(-)$ for the reflexive tensor power and reflexive symmetric power, meaning the reflexive hull of the usual tensor power or symmetric power respectively.

\begin{theorem} \cite[Corollary 3.2]{GKK10} \label{thm:miyaoka_extension}
Let $\pi : X \to Y$ is the resolution of a variety with isolated finite quotient singularities and $E$ be the reduced exceptional divisor. Then 
\[ \Sym^{[m]}\Omega_Y = \pi_* \Sym^m\Omega_X(\log{E}). \]
In particular, $\pi_* \Sym^{m} \Omega_X(\log{E})$ is reflexive and there is a pullback map
\[ H^0(Y, \Sym^{[m]}\Omega_Y) = H^0(Y^{\circ}, \Sym^m \Omega_{Y^\circ}) \to H^0(X, \Sym^{m} \Omega_X(\log{E})) \]
\end{theorem}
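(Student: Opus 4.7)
The plan is to establish the stated sheaf equality locally on $Y$; over the smooth locus $Y^\circ$, on which $\pi$ is an isomorphism, the identification $\Sym^{[m]}\Omega_{Y^\circ} = \Sym^m \Omega_{Y^\circ}$ is manifest. The content of the theorem lies at each isolated quotient singularity $y \in Y$. I would work analytically, so $(Y,y) \cong (V/G, 0)$ with $V = \A^2$ and $G \subset \GL_2(\CC)$ a small finite group (no pseudo-reflections) having only $0$ as a fixed point. Writing $p : V \to Y$ for the quotient map, which is \etale outside codimension two, the reflexive sheaf $\Sym^{[m]}\Omega_Y$ is identified near $y$ with the invariants $(p_* \Sym^m \Omega_V)^G$.

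Let $Z$ denote the normalization of the fiber product $X \times_Y V$, with the induced diagonal $G$-action. This produces a commutative square in which $f : Z \to X$ is a finite Galois cover of group $G$, tamely branched precisely along the simple normal crossings divisor $E$ (by purity of branch locus), and $q : Z \to V$ is a proper birational morphism with exceptional divisor $f^{-1}(E) = q^{-1}(0)$. By the Abhyankar/Kummer local description of a tame Galois cover branched along a simple normal crossings divisor, one has $f^*\Sym^m\Omega_X(\log E) = \Sym^m\Omega_Z(\log f^{-1}(E))$; invoking the projection formula and taking $G$-invariants recovers $\Sym^m \Omega_X(\log E) = (f_*\Sym^m\Omega_Z(\log f^{-1}(E)))^G$. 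Pushing forward by $\pi$ and using the commutativity $\pi \circ f = p \circ q$ reduces the theorem to the identification
\[
q_* \Sym^m \Omega_Z(\log f^{-1}(E)) \;=\; \Sym^m \Omega_V
\]
as $G$-sheaves on $V$, after which taking $G$-invariants of both sides yields the desired equality.

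This last identification is the main obstacle. The inclusion $\Sym^m \Omega_V \subset q_* \Sym^m \Omega_Z(\log f^{-1}(E))$ follows because $q^* \Sym^m \Omega_V$ embeds in $\Sym^m \Omega_Z \subset \Sym^m \Omega_Z(\log f^{-1}(E))$. The reverse inclusion requires an explicit local toric computation on the Hirzebruch--Jung chain of rational curves described in Section~\ref{section:quotient_sing}: in the continued-fraction coordinates along each exceptional $\P^1$, one checks monomial by monomial that the apparent log poles along $f^{-1}(E)$ of a pushed-down form are cancelled by the Jacobian of $q$, so that the pushforward lies in $\Sym^m \Omega_V$. Granting this computation, the ``in particular'' statement is immediate: taking global sections of the sheaf equality over $Y$ yields the displayed identification, with the middle equality $H^0(Y, \Sym^{[m]}\Omega_Y) = H^0(Y^\circ, \Sym^m \Omega_{Y^\circ})$ expressing reflexivity of $\Sym^{[m]}\Omega_Y$.
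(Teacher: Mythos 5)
The paper offers no proof of this statement; it is quoted directly from \cite[Corollary 3.2]{GKK10}, so there is no internal argument to compare yours against. Your skeleton --- reduce to an analytic-local quotient presentation $(Y,y)\cong (V/G,0)$ with $G$ small, pass to the normalized fiber product $Z$ of $X$ and $V$ over $Y$ with its $G$-action, and transport the question to the smooth chart $V$ --- is precisely the covering trick used in the cited source and in the Miyaoka-type extension theorems it descends from, so the approach is the right one.

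There is nevertheless a genuine gap, and it sits in a different place from where you locate it. The inclusion you defer to an unexecuted ``explicit local toric computation,'' namely $q_*\Sym^m\Omega_Z(\log f^{-1}(E))\subseteq \Sym^m\Omega_V$, is in fact formal: a section of the left-hand side restricts, over $U\setminus\{0\}$ where $q$ is an isomorphism, to a section of $\Sym^m\Omega_V$, and this extends across the codimension-two point $0$ because $\Sym^m\Omega_V$ is locally free on the smooth $V$; no analysis of the Hirzebruch--Jung chain is required. The actual content of the theorem is concentrated in the step you treat as a quotable standard fact, the identity $f^*\Sym^m\Omega_X(\log E)=\Sym^m\Omega_Z(\log f^{-1}(E))$. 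The Abhyankar/Kummer comparison of logarithmic differentials is proved for covers of \emph{smooth} varieties branched along simple normal crossings divisors, whereas your $Z$ is only normal: the Hirzebruch--Jung fan is smooth for the lattice of $Y$ but in general not for the index-$\#G$ sublattice corresponding to $V$, so $Z$ typically acquires singularities at the nodes of $f^{-1}(E)$ and $\Omega_Z(\log f^{-1}(E))$ is not locally free, so the identity cannot be cited as stated. To repair this you must prove the comparison at the codimension-one points of $Z$ over the generic points of $E$, where $Z$ is regular and the extension of discrete valuation rings is tame --- this is the $dt/t = r\,ds/s$ computation, and it is the heart of the whole proof --- and then pass to reflexive hulls on both sides to extend over the singular locus of $Z$; you also need $(f_*\struct{Z})^G=\struct{X}$, i.e.\ $Z/G=X$, to get the descent $(f_*f^*\mathcal{L})^G=\mathcal{L}$ for the locally free sheaf $\mathcal{L}=\Sym^m\Omega_X(\log E)$. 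As written, the proposal omits the one computation that carries the content while presenting a Hartogs-type triviality as the main obstacle, so it does not yet constitute a proof.
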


To construct a map $\gamma$ we first build a map $\omega_X \to \Sym^2 \Omega_X(\log{E})$ and prove that, when restricted to $\omega_X(-E)$ it factors through $\Sym^2 \Omega_X$. This map is built as follows:
\[ \pi_* \omega_X \iso \omega_Y \to (q_* \omega_{C_1 \times C_2})^G \to (q_* \Sym^2 \Omega_{C_1 \times C_2})^G = \Sym^{[2]} \Omega_Y = \pi_* \Sym^{2} \Omega_X(\log{E}) \]
where the last equality uses the logarithmic extension theorem~\ref{thm:miyaoka_extension}.  Note that the diagonal inclusion map $\omega_{C_1 \times C_2} \embed \Sym^{2} \Omega_{C_1 \times C_2}$ is $G$-equivariant because $G$ acts diagonally on $C_1 \times C_2$. Adjunction produces a map
\[ \alpha : \pi^* \pi_* \omega_X \to \Sym^{2} \Omega_X(\log{E}) \]
but $(\pi^* \pi_* \omega_X)^{\vee \vee} \iso \omega_X$ by the remark after Lemma~\ref{lem:klt_push_pull_canonical} and $\Sym^{2} \Omega_X(\log{E})$ is reflexive, in fact locally free, so $\alpha$ factors through the reflexive hull of the source, giving the desired map
\[ \alpha' : \omega_X \to \Sym^{2} \Omega_X(\log{E}). \]
We want to show that $\alpha'$ lands in honest symmetric forms when restricted to $\omega_X(-E)$. This holds immediately when restricting to $\omega_X(-2E)$ but the refined result is necessary for the numerics of our examples. This is achieved via the subsequent lemma that follows the philosophy of a result due to Greb, Kebekus, \Kovacs, and Peternell \cite{GKKP} establishing the existence of pullback maps $\pi^* : H^0(Y, \Omega_Y^{[p]}) \to H^0(X, \Omega_X^p)$ for a resolution of klt singularities $\pi : X \to Y$. In view of the logarithmic extension theorem~\ref{thm:miyaoka_extension}, this result says exactly that the extended logarithmic form always has ``one fewer pole than expected'' along $E$. Likewise, we conclude a similar ``one fewer pole'' statement for symmetric forms.

\begin{lemma} \label{lem:pole_order}
Let $Y$ be a surface with rational singularities and $\pi : X \to Y$ the minimal resolution with exceptional divisor $E$. Then for any integer $m > 0$,
\[ \pi_* \left( \big[\Sym^m \Omega_X(\log{E}) \big](-E) \right) = \Sym^{[m]} \Omega_Y. \]
\end{lemma}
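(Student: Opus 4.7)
The containment $\pi_*\bigl(\Sym^m\Omega_X(\log E)(-E)\bigr) \subseteq \pi_*\Sym^m\Omega_X(\log E) = \Sym^{[m]}\Omega_Y$ is immediate from Theorem~\ref{thm:miyaoka_extension}, and both sides agree trivially over the smooth locus $Y^\circ$ where $\pi$ is an isomorphism. The work is therefore to show that the Miyaoka--GKK extension $\omega_X \in \Sym^m\Omega_X(\log E)$ of a reflexive symmetric form $\omega$ on $Y$ actually lies in the smaller subsheaf $\Sym^m\Omega_X(\log E)(-E)$, i.e.\ has ``one fewer log pole than expected'' along each exceptional component.

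The approach is to pass to a common smooth cover. Working \'etale-locally on $Y$, I would reduce to the case $Y = U/G$ with $U$ smooth and $|G|$ coprime to the characteristic (the only setting relevant to this paper). Let $\bar X$ be a smooth surface dominating both $X$ via a $G$-Galois cover $p \colon \bar X \to X$ and $U$ via a proper birational morphism $q \colon \bar X \to U$ (a sequence of blow-ups, since both are smooth surfaces); for example, take the normalization of $X \times_Y U$ and further resolve. If $\tilde\omega$ is the $G$-invariant symmetric form on $U$ corresponding to $\omega$, then as rational symmetric forms on $\bar X$ one has $p^*\omega_X = q^*\tilde\omega$.

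The key technical input is the identity
\[ q^*\Sym^m\Omega_U \;=\; \Sym^m\Omega_{\bar X}(\log \bar F)(-m\bar F), \]
where $\bar F$ denotes the total exceptional divisor of $q$. For a single blow-up with local form $q(s,t) = (s,st)$, one computes $q^*\Omega_U = \langle ds,\, s\,dt\rangle \subset \Omega_{\bar X}$, and taking $\Sym^m$ produces generators $s^b\,ds^a dt^b = s^m(ds/s)^a dt^b$ for $a+b=m$, matching the right-hand side. An inductive chart computation for iterated blow-ups, tracking how the ``$s^m$-factor'' propagates when the center of a further blow-up lies on an existing exceptional component, extends the identity globally. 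In particular $p^*\omega_X = q^*\tilde\omega \in \Sym^m\Omega_{\bar X}(\log \bar F)(-m\bar F)$.

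The lemma now follows by descent along $p$. Near a component $\bar F_j$ of $\bar F$ lying over an exceptional divisor $E_i \subset X$ with tame ramification index $e_j \geq 1$, local coordinates give $u = \bar u^{e_j}$, so $du/u = e_j\,d\bar u/\bar u$. Expanding $\omega_X = \sum_{c+d=m}\phi_{c,d}\,(du/u)^c dt^d$ with $\phi_{c,d}$ regular, the divisibility $\bar u^m \mid e_j^c\,p^*\phi_{c,d}$ forced by the preceding identity translates to $u^{\lceil m/e_j\rceil} \mid \phi_{c,d}$, hence $u \mid \phi_{c,d}$ since $m \geq 1$. Every coefficient $\phi_{c,d}$ therefore vanishes along $E_i$, giving $\omega_X \in \Sym^m\Omega_X(\log E)(-E_i)$. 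Running over all components of $E$ yields $\omega_X \in \Sym^m\Omega_X(\log E)(-E)$, as required. The principal obstacle is the key identity for iterated blow-ups: the single-blow-up case is transparent, but the propagation of the $s^m$-factor through blow-ups whose centers meet existing exceptional components requires careful bookkeeping in coordinate charts.
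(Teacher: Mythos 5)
Your proof is correct in substance but takes a genuinely different route from the paper's. The paper reduces the statement to the vanishing $H^0(E, \Sym^m \Omega_X(\log{E})|_E) = 0$, proves the case $m = 1$ by Steenbrink vanishing and local vanishing (mixed Hodge theory, via duality), and then bootstraps to general $m$ using that each $\Omega_X(\log{E})|_{E_i}$ is a sum of line bundles on $\P^1$ with twists all of one sign -- so symmetric powers of global sections surject -- together with an equalizer argument over the dual graph of $E$. That argument is cohomological and applies to rational surface singularities in general, which is the stated generality of the lemma. Your argument is instead a local computation on a smooth cover and only treats tame quotient singularities; this suffices for every application in the paper (and has the mild advantage of avoiding Hodge theory, so it would survive in positive characteristic for tame quotients), but strictly speaking it does not prove the lemma as stated.

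Two corrections to the details. First, your ``key identity'' $q^* \Sym^m \Omega_U = \Sym^m \Omega_{\bar X}(\log{\bar F})(-m\bar F)$ is \emph{false} as an equality once $q$ is a composite of two or more blow-ups: already for $m=1$, blowing up a point of the first exceptional curve and computing in the chart $s = s''t''$, $t = t''$ gives $q^*\Omega_U = \langle t''\,ds'' + s''\,dt'',\ s''t''\,dt''\rangle$, which is a \emph{proper} subsheaf of $\Omega_{\bar X}(\log{\bar F})(-\bar F) = \langle t''\,ds'',\ s''\,dt''\rangle$ (the quotient is $\cO/(t'')$). Fortunately your argument only uses the inclusion $q^*\Sym^m\Omega_U \subseteq \Sym^m\Omega_{\bar X}(\log{\bar F})(-m\bar F)$, and that inclusion needs no bookkeeping at all: for a regular $1$-form $\alpha$ on $U$, the restriction of $q^*\alpha$ to any component of $\bar F$ equals $(q|_{\bar F})^*\alpha = 0$ because $q$ contracts $\bar F$ to points, which in local coordinates at a general point of $\bar F$ says exactly that $q^*\alpha \in \langle ds,\ s\,dt\rangle = \Omega_{\bar X}(\log{\bar F})(-\bar F)$; now apply $\Sym^m$. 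So the ``principal obstacle'' you flag dissolves once you notice only one inclusion is used. Second, after normalizing $X \times_Y U$ and resolving, $p : \bar X \to X$ is only generically finite, not a finite Galois cover, and its exceptional locus may contain components contracted by $p$; this is harmless because you only need to test divisibility of the coefficients $\phi_{c,d}$ at a general point of each $E_i$, where you may choose a component of $p^{-1}(E_i)$ dominating $E_i$ and invoke the tame local form $u = \bar u^{e_j}$ -- but the write-up should say this explicitly.
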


\begin{proof}
Since these sheaves agree on the smooth locus $Y^{\circ}$, it suffices to prove that any reflexive symmetric form on $Y$ extends to a section of $\big[\Sym^m \Omega_X(\log{E}) \big](-E)$. Using the logarithmic extension theorem~\ref{thm:miyaoka_extension},
there is a diagram
\begin{center}
\begin{tikzcd}
\pi_* \Sym^m \Omega_X(\log{E}) \arrow[r, equals] & \Sym^{[m]} \Omega_Y
\\
\pi_* \left( \big[\Sym^m \Omega_X(\log{E}) \big](-E) \right) \arrow[u, hook] \arrow[ru]
\end{tikzcd}
\end{center}
and we wish to prove the upward inclusion is surjective.
This is local on the normal surface $Y$, so we may shrink so that $Y$ is affine and contains a single isolated singularity. Then we claim 
\[ H^0(X, \big[\Sym^m \Omega_X(\log{E}) \big](-E)) \to H^0(X, \Sym^m \Omega_X(\log{E})) \]
is surjective. It would suffice to establish the following vanishing result:
\[ H^0(E, \Sym^m \Omega_X(\log{E})|_E) = 0. \]
In degree $m = 1$, this is essentially a consequence of the Hodge index theorem (see \cite[\S5]{local_vanishing}). It can also be seen from results in mixed Hodge theory, specifically Steenbrink vanishing: $R^1 \pi_* \Omega_X(\log{E})(-E) = 0$ \cite[Theorem~14.1]{GKKP}, and local vanishing: $R^1 \pi_* \Omega_X(\log{E}) = 0$ \cite[Theorem~B]{local_vanishing}. Indeed, duality gives
\[ \Rbf \pi_* \Omega_X(\log{E})(-E) = \RHom_{\cO_X}(\Rbf \pi_* \Omega_X(\log{E}), \omega_Y). \]
Hence, $\pi_* \Omega_X(\log{E})(-E)$ is reflexive because $\Rbf \pi_* \Omega_X(\log{E}) = \pi_* \Omega_X(\log{E})$ by local vanishing and $\omega_Y$ is reflexive. Then the long exact sequence
\[ H^0(X, \Omega_X(\log{E})(-E)) \iso H^0(X, \Omega_X(\log{E})) \to H^0(E, \Omega_X(\log{E})|_E) \to H^1(X, \Omega_X(\log{E})(-E)) \]
proves $H^0(E, \Omega_X(\log{E})|_E) = 0$ from Steenbrink vanishing.
\par 
Now we use the $m = 1$ case to prove the result for general $m$. The exceptional divisor $E$ is a tree of smooth rational curves $E_i$. On each component there is a sequence
\[ 0 \to \Omega_{E_i}(E' \cdot E_i) \to \Omega_X(\log{E})|_{E_i} \to \cO_{E_i} \to 0  \]
where $E' = E - E_i$. Let $\E_i$ denote the vector bundle $\Omega_X(\log{E})|_{E_i}$ on $E_i$. Note that $\E_i$ is a sum of line bundles $\cO_{\P^1}(a)$ over twists $a$ that are all $\ge 0$ or all $\le 0$. Therefore, $\E_i$ has the remarkable property that
\[ \Sym^m H^0(E_i, \E_i) \to H^0(E_i, \Sym^m \E_i) \]
is surjective for all $m$. Let $W_{ij}$ be the fiber of $\Omega_X(\log{E})$ at $E_i \cap E_j$. Then $H^0(E, \Omega_X(\log{E})|_E)$ is the equalizer of the two maps
\[ \bigoplus_{i} H^0(E_i, \E_i) \rightrightarrows \bigoplus_{i < j} W_{ij} \]
and likewise $H^0(E, \Sym^m \Omega_X(\log{E})|_E)$ is the equalizer of 
\[ \bigoplus_{i} H^0(E_i, \Sym^m \E_i) \rightrightarrows \bigoplus_{i < j} \Sym^m W_{ij}. \]
The noted surjectivity shows it suffices to prove that 
\[ \bigoplus_{i} \Sym^m H^0(E_i, \E_i) \rightrightarrows \bigoplus_{i < j} \Sym^m W_{ij} \]
has a trivial equalizer where the maps are sums of $\Sym^m(-)$ of the maps at level $m = 1$. Now it becomes entirely a question of linear algebra. Given the data $(V_i, W_k, \varphi_{i,k})$ consisting of a list of vector spaces $V_i$ and $W_k$ with maps $\varphi_{i,k} : V_i \to W_k$ if there are no nonzero lists $v_i \in V_i$ so that $\varphi_{i,k}(v_i)$ are equal over all $i,k$ then the same holds for the data $(\Sym^m V_i, \Sym^m W_k, \Sym^m \varphi_{i,k})$ for essentially the same reason that symmetric powers preserve injectivity.
\end{proof}

Our proof is highly particular to the surface case so it would be interesting to study extensions of this result in higher dimension. 

\subsection{Numerical criteria}

In this section, we prove a purely numerical criterion for the existence on $X$ of (many) diagonal symmetric forms. Using the map $\gamma$ of the previous section, it suffices to show that $\omega_X(-E) = \struct{X}(K_X - E)$ has (many) sections. 

\begin{prop} \label{prop:numerical_volume}
Let $\pi : X \to Y$ be the minimal resolution of a klt surface with exceptional divisor $E$. Then 
\[ (K_X - E)^2 = K_X^2 - 3 \sum_i (-E_i^2 - 2) - 2 \# Y^{\mathrm{sing}} \]
Moreover, if $X$ is of general type and $(K_X - E)^2 > 0$, then $K_X - E$ is big.
\end{prop}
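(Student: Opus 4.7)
The plan is to prove the numerical identity by direct expansion and combinatorics of the exceptional divisor, and then establish bigness via the standard ``$D^2 > 0$ dichotomy'' by ruling out the opposite case using a pullback argument to the singular model $Y$.

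\textbf{Numerical identity.} I will first expand $(K_X - E)^2 = K_X^2 - 2 K_X \cdot E + E^2$ and evaluate $K_X \cdot E$ and $E^2$ separately. Since $\pi$ is the minimal resolution of a klt surface, every exceptional component $E_i$ is a smooth rational curve with $E_i^2 \leq -2$, and the components form a disjoint union of trees meeting transversally, one tree per singular point of $Y$. Adjunction on each $E_i \cong \mathbb{P}^1$ gives $K_X \cdot E_i = -E_i^2 - 2$, hence $K_X \cdot E = \sum_i (-E_i^2 - 2)$. For the self-intersection, the tree structure gives $\sum_{i < j} E_i \cdot E_j = N - \# Y^{\mathrm{sing}}$, where $N$ is the total number of components (each tree with $\ell_p$ components contributes $\ell_p - 1$ edges). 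Substituting and simplifying yields the stated formula.

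\textbf{Bigness.} The plan is to invoke the following dichotomy: on a smooth projective surface, if $D^2 > 0$, then either $D$ or $-D$ is big; this is a standard consequence of asymptotic Riemann--Roch together with the observation that $D$ and $-D$ cannot both be pseudoeffective unless they are numerically trivial. Applied to $D = K_X - E$, it suffices to rule out the alternative that $E - K_X$ is big. The key step will be to pair against the pullback $\pi^* H$ of an ample divisor $H$ on $Y$: this pullback is nef on $X$ and satisfies $\pi^* H \cdot E_i = 0$ for every exceptional component by the projection formula. If $E - K_X$ were big, an effective representative $F$ of a large multiple would satisfy $F \cdot \pi^* H \geq 0$ by nefness; on the other hand, writing $K_X = \pi^* K_Y + \sum a_i E_i$, the projection formula gives $\pi^* H \cdot (E - K_X) = -H \cdot K_Y$. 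Since $X$ is of general type and $\pi$ is birational to the klt surface $Y$, Kodaira dimension is preserved and $Y$ is also of general type, so $K_Y$ is big and $H \cdot K_Y > 0$. This will contradict nefness of $\pi^* H$ and hence establish bigness of $K_X - E$.

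\textbf{Main obstacle.} The calculational portion is routine once the dual-graph combinatorics of the exceptional divisor is in hand. The more subtle point is the bigness step: since $K_X$ need not itself be nef on the resolution (even when $X$ is of general type), one cannot directly intersect $K_X - E$ with $K_X$ to exclude the wrong case. The main obstacle, which is really the essential trick of the argument, is recognizing that the right nef class to pair against is the pullback $\pi^* H$ from $Y$, which vanishes identically on the exceptional locus while still detecting the bigness of $K_Y$ through the projection formula.
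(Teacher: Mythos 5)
Your proposal is correct and follows essentially the same route as the paper: the numerical identity is obtained by the identical expansion, adjunction on the rational components, and edge-count of the dual forest, and the bigness step is the same Riemann--Roch argument — the paper just unwinds your ``$D^2>0$ dichotomy'' explicitly by bounding $h^2(m(K_X-E)) = h^0(mE-(m-1)K_X)$ via Serre duality and killing it with exactly your pairing $\pi^*H \cdot (mE-(m-1)K_X) = -(m-1)H\cdot K_Y < 0$. The ``essential trick'' you identify (pairing against $\pi^*H$, which is nef and annihilates the exceptional components) is indeed the one the paper uses.
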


\begin{proof}
Expand
\[ (K_X - E)^2 = K_X^2 - 2 K_X \cdot E + E^2 \]
and write $E = E_1 + \cdots + E_r$. Since the support of $E$ is a forest (a union of trees) consisting of smooth rational curves, we get 
\[ E^2 = \sum_i E_i^2 + 2 \sum_{i > j} E_i \cdot E_j \]
but the intersections $E_i \cdot E_j$ correspond to edges of the dual graph $\Gamma$ of $E$. Since $\Gamma$ is a forest, there is exactly one more node (component of $E$) than edge for each connected component. Hence,
\[ E^2 = \sum_i (E_i^2 + 2) - 2 \# \pi_0(\Gamma). \]
Note also that $\pi_0(\Gamma) = \# Y^{\mathrm{sing}}$ since, by Zariski's main theorem, the exceptional divisor above an isolated singularity is connected. Furthermore, the adjunction formula gives that $(K_X + E_i) \cdot E_i = -2$ so 
\[ K_X \cdot E = - \sum_i (E_i^2 + 2) \]
Putting everything together,
\[ (K_X - E)^2 = K_X^2 + 3 \sum_i (E_i^2 + 2) - 2 \# Y^{\mathrm{sing}}. \]
Now, using Hirzebruch--Riemann-Roch, 
\[ h^0(\struct{X}(m(K_X - E)) + h^2(\struct{X}(m(K_X - E))) \ge \frac{1}{2} m^2 (K_X - E)^2 + O(m) \]
but $h^2(\struct{X}(m(K_X - E))) = h^0(\struct{X}(m E - (m-1) K_X)) = 0$ for $m > 1$ because if $H$ is an ample divisor on $Y$, then $\pi^* H \cdot (m E - (m-1) K_X) = -(m-1) H \cdot K_Y < 0$ so $m E - (m-1) K_X$ cannot be effective. Thus, $K_X - E$ is big as long as $(K_X - E)^2 > 0$. 
\end{proof}

\begin{cor}  \label{cor:numerics_of_each_sing}
If $Y$ has $C$ canonical singularities (type $A,D,E$ singularities) and additionally $N_{n,a}$ non-canonical cyclic quotient singularities of type $\frac{1}{n} (1,a)$, then
\[ (K_X - E)^2 = K_X^2 - 2 C - \sum_{n \ge 1} \sum_{a = 1}^{n-2} I(\tfrac{n}{a}) N_{n,a} \]
where, writing $\frac{n}{a} = \dbrac{b_1, \dots, b_{\ell}}$, we define:
\[ D(\tfrac{n}{a}) := 3 \sum_{i=1}^{\ell} (b_i - 2) + 2 \]
In the notation of \cite{bauer:classification}, $D(\tfrac{n}{a}) = 3 k(\tfrac{n}{a}) - 3 \frac{2 + a + a'}{n} + 8$ where $0 \le a' < n$ is the multiplicative inverse of $a$ in $(\Z / n \Z)^{\times}$. 
\end{cor}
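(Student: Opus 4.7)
The plan is to deduce this corollary directly from Proposition~\ref{prop:numerical_volume} by regrouping the contributions according to the singularity type, and then to compare with the invariant $k(\tfrac{n}{a})$ from \cite{bauer:classification}.

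First I would rewrite Proposition~\ref{prop:numerical_volume} as
\[ (K_X - E)^2 = K_X^2 + \sum_{p \in Y^{\mathrm{sing}}} \Big( 3 \sum_{E_i \subset \pi^{-1}(p)} (E_i^2 + 2) - 2 \Big), \]
so that it suffices to compute the inner sum for each singular point separately. The inner quantity is intrinsic to the analytic type of the singularity because the minimal resolution is an analytic invariant.

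Next I would handle the two cases. For a canonical (ADE) singularity, every component of $\pi^{-1}(p)$ is a smooth rational $(-2)$-curve, so $E_i^2 + 2 = 0$ for each $i$, and the whole contribution collapses to $-2$; summing over the $C$ canonical singularities gives $-2C$. For a non-canonical cyclic singularity of type $\tfrac{1}{n}(1,a)$, the Hirzebruch--Jung proposition at the end of \S\ref{section:quotient_sing} identifies $\pi^{-1}(p)$ with a chain of smooth rational curves with self-intersections $-b_i$, where $\tfrac{n}{a} = \dbrac{b_1, \dots, b_\ell}$. Therefore the inner sum becomes
\[ 3 \sum_{i=1}^{\ell} (-b_i + 2) - 2 = -\Big( 3 \sum_{i=1}^{\ell}(b_i - 2) + 2 \Big) = -D(\tfrac{n}{a}), \]
which, after summing over all non-canonical cyclic singularities grouped by type, yields the claimed formula (the typo $I$ for $D$ is to be corrected).

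Finally, for the reformulation in terms of Bauer's invariant $k(\tfrac{n}{a})$, I would simply substitute the definition $k(\tfrac{n}{a}) = -2 + \frac{2+a+a'}{n} + \sum_i (b_i - 2)$, solve for $\sum_i (b_i - 2)$, and plug into the definition of $D$; this gives $D(\tfrac{n}{a}) = 3k(\tfrac{n}{a}) - 3\frac{2+a+a'}{n} + 8$ after combining constants. There is no real obstacle in this proof --- it is a direct bookkeeping argument --- but one should be careful that Proposition~\ref{prop:numerical_volume} indexes $\# Y^{\mathrm{sing}}$ by singular points (the connected components of $E$), so that the $-2$ must be counted once per singularity and not once per exceptional component, which is exactly what allows the cancellation for ADE points and the clean formula $-D(\tfrac{n}{a})$ for cyclic non-canonical points.
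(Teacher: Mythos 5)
Your proposal is correct and follows exactly the route the paper intends: the corollary is stated without a separate proof precisely because it is the direct per-singularity regrouping of Proposition~\ref{prop:numerical_volume} that you carry out, using the Hirzebruch--Jung description $E_i^2 = -b_i$ and the connectedness of the exceptional fiber over each singular point (so the $-2$ is counted once per singularity). Your identification of the typo ($I$ should be $D$) and the algebraic check of $D(\tfrac{n}{a}) = 3k(\tfrac{n}{a}) - 3\tfrac{2+a+a'}{n} + 8$ are both accurate.
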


\begin{cor} \label{cor:numerical_criterion}
For a product-quotient surface,
\[ (K_X - E)^2 = \frac{8(g_1 - 1)(g_2 - 1)}{\# G} - k(\mathcal{B}) - D(\mathcal{B}) \]
If $X$ is of general type and this number is positive, then $\omega_X(-E)$ is big and $X$ carries a diagonal symmetric form.
\end{cor}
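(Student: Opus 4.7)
\medskip

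\noindent\textbf{Proof proposal.} The strategy is to repackage the earlier results of this section. The numerical identity will follow by substituting Proposition~\ref{prop:chern_numerics} into Corollary~\ref{cor:numerics_of_each_sing}. The bookkeeping goes as follows: a canonical cyclic singularity has continued fraction $\dbrac{2, \dots, 2}$, so $D(\tfrac{n}{a}) = 2$ for each such singularity, which exactly accounts for the $2C$ term in Corollary~\ref{cor:numerics_of_each_sing}, while the non-canonical cyclic singularities contribute $D(\tfrac{n}{a}) N_{n,a}$ in the second sum. Because the singularities of $Y$ are all cyclic quotients by the discussion of \S\ref{section:basket}, this packages into $D(\mathcal{B}) = \sum_{x \in \mathcal{B}} D(x)$ and yields
\[ (K_X - E)^2 = \frac{8(g_1 - 1)(g_2 - 1)}{\# G} - k(\mathcal{B}) - D(\mathcal{B}). \]

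For the bigness claim I will invoke the second half of Proposition~\ref{prop:numerical_volume} directly: since $X$ is of general type and $(K_X - E)^2 > 0$ by hypothesis, $K_X - E$ is big, which is to say the line bundle $\omega_X(-E)$ is big.

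To produce a diagonal symmetric form, I will use the map $\gamma : \omega_X(-E) \to \Sym^{2} \Omega_X$ constructed in \S4.1. By construction its pullback $q^* \gamma$ factors through the direct-summand inclusion $\omega_{C_1 \times C_2} \embed \Sym^2 \Omega_{C_1 \times C_2}$ on the \etale locus of $q$, so $\gamma$ is nonzero and, being a map out of a line bundle, injective. Its $m$-fold tensor power then yields an injection
\[ \gamma^{\otimes m} : \omega_X^{\otimes m}(-mE) \embed \Sym^{2m} \Omega_X. \]
Since $\omega_X(-E)$ is big, $h^0(X, \omega_X^{\otimes m}(-mE))$ grows quadratically in $m$ for $m \gg 0$; the image of any such nonzero section under $\gamma^{\otimes m}$ is the desired diagonal symmetric $2m$-form on $X$.

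I do not anticipate genuine obstacles at this stage: the substantive work -- namely the construction of $\gamma$ together with the one-fewer-pole cancellation of Lemma~\ref{lem:pole_order} -- has already been carried out in the previous subsection. The only point requiring care is verifying that $\gamma$ remains nonzero after the successive reflexive-hull and adjunction operations, which is immediate from the fact that $\omega_{C_1 \times C_2} \embed \Sym^2 \Omega_{C_1 \times C_2}$ is split $G$-equivariantly and $q$ is generically \etale, so the diagonal summand survives faithfully on $X$.
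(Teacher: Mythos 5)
Your proposal is correct and follows the paper's proof, which simply combines Proposition~\ref{prop:chern_numerics}, Corollary~\ref{cor:numerics_of_each_sing}, and the map $\gamma$ built from Lemma~\ref{lem:pole_order}; your observation that canonical cyclic singularities have $D(\tfrac{n}{a}) = 2$ and hence account for the $2C$ term is exactly the right bookkeeping. The passage from bigness of $\omega_X(-E)$ to diagonal $2m$-forms via $\gamma^{\otimes m}$ is likewise the intended argument, matching Definition~\ref{def:diagonal}.
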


\begin{proof}
Combine Corollary~\ref{cor:numerics_of_each_sing} with Lemma~\ref{lem:pole_order} and Proposition~\ref{prop:chern_numerics}.
\end{proof}

When $Y$ has only ADE singularities (meaning it has only $A_n$ singularities since all the singularities of $Y$ are cyclic quotient), it is interesting to compare this estimate to the estimate of \cite{RR13}. Recall that an $A_n$ singularity is type $\frac{1}{n+1}(1, n)$, so in this case,
\[ k(\cB) = 0, \quad \quad e(\cB) = \sum_{n \ge 1} \frac{n(n+2)}{n+1} a_n, \quad \quad B(\cB) = 2 e(\cB)  \]
where $a_n$ is the number of $A_n$ singularities on $Y$. Therefore \cite{RR13} produces symmetric forms when
\[ 2(K_X^2 - c_2) + \sum_{n \ge 1} \frac{n(n+2)}{n+1} a_n = \frac{8(g_1 - 1)(g_2 - 1)}{\# G} - \sum_{n \ge 1} \frac{n(n+2)}{n+1} a_n  \]
is positive. 
On the other hand, our method produces diagonal symmetric forms when
\[ \frac{4(g_1 - 1)(g_2 - 1)}{\# G} - \sum_{n \ge 1} a_n \]
is positive. A more recent computation in \cite{bruin:an-singularities} improves the result of \cite{RR13} for $A_n$ singularities. They show that $\Omega_X$ is big if 
\[ K_X^2 - c_2(X) + \sum_{n \ge 1} \frac{n(n+2)}{n+1} a_n - (\tfrac{4}{3} \pi^2 - 12) \sum_{n \ge 1} a_n > 0 \]
which, in our case, is the inequality
\[ \frac{4(g_1 - 1)(g_2 - 1)}{\# G} - (\tfrac{4}{3} \pi^2 - 12) \sum_{n \ge 1} a_n > 0. \]
Note that $\tfrac{4}{3} \pi^2 - 12 \approx 1.2 > 1$ so our inequality is more easily satisfied. However, for the particular case of only $A_1$ singularities, \cite{bruin:an-singularities} gives a better inequality:
\[ \frac{4(g_1 - 1)(g_2 - 1)}{\# G} - \frac{66}{108} a_1 > 0. \]

\section{Constructions and examples} \label{section:examples}

In this section, we prove Theorem~\ref{intro:thm:example} by checking that the example described in the introduction satisfies the necessary conditions to apply Theorem~\ref{intro:thm:criterion}. 

\subsection{Hurwitz curves of genus $14$}

A \textit{Hurwitz curve} is any hyperbolic smooth projective curve $C$ over $\CC$ realizing equality in the Hurwitz bound
\[ \# \Aut(C) = 84(g(C) - 1). \]
Any such curve is a \Belyi curve and hence is defined over $\ol{\Q}$. As a complex curve, $C$ can be realized as a quotient of the upper half-plane $\HH$ by a Fuchsian group of finite index in the von Dyck group $\TT(2,3,7)$. This result arises from $2,3,7$ being the unique (up to reordering) sequence of positive integers $e_1, \dots, e_r$ minimizing the sum 
\[ \sum_i \left( 1 - \frac{1}{e_i} \right) \]
over all sequences such that the sum is $ > 2$ and obtaining the minimum value of $2 + \frac{1}{42}$. This means we can always present a Hurwitz curve as a \Belyi map $C \to \P^1$ ramified over $0,1,\infty$ to order $2,3,7$. Because the ramification orders are distinct, given $C$, this map is unique up to isomorphism. Therefore, the data to specify a Hurwitz curve with automorphism group $G$ is a list of spherical generators $g_1, g_2, g_3 \in G$ of orders $2,3,7$ up to simultaneous conjugacy (which allows for isomorphisms not respecting a marked base-point in $X$). A group admitting such generators is called a \textit{Hurwitz group}. Actually, what we have specified is a $G$-\Belyi cover of $\P^1$ meaning along with a particular choice of isomorphism $G \iso \Aut(C)$. To remove this additional choice, we can consider tuples $(g_1, g_2, g_3)$ equivalent up to overall automorphism of $G$ rather than only inner automorphisms.  When $G$ has trivial center, the choice of an isomorphism $\varphi G \to \Aut(C)$ rigidifies the pair $(C, \varphi)$, giving a point on a certain Hurwitz scheme.
\par 
For $g = 14$, there exist exactly three isomorphism classes of Hurwitz curves which are exchanged by $\Gal(\ol{\Q} / \Q)$. In fact, as we will see in Appendix~\ref{appendix:arithmetic}, these three curves arise from a curve $C / K_+$ under the three complex embeddings $K_+ \embed \CC$ of the field $K_+ = \Q(\eta)$ for $\eta = \zeta_7 + \zeta_7^{-1}$. These curves have $\Aut(C) \cong \PSL_2(\FF_{13})$ which is a simple group of order $1092$. However, there are $6$ isomorphism classes of $\PSL_2(\FF_{13})$-\Belyi $(2,3,7)$-ramified covers of $\P^1$ corresponding to the $6$ equivalence classes of $(2,3,7)$-spherical generators up to simultaneous conjugation. Under the unique outer automorphism of $\PSL_2(\FF_{13})$, these $6$ classes form $3$ orbits  corresponding to the three Hurwitz curves of genus $14$, each presented as a $\PSL_2(\FF_{13})$-cover of $\P^1$ in two distinct ways up to $\PSL_2(\FF_{13})$-equivariant isomorphism. In fact, in  Appendix~\ref{appendix:computations}, we show that the stack $\mathcal{H}_{14, \PSL_2(\FF_{13})}$ of pairs $(C, \varphi)$ consisting of a curve $C$ of genus $14$ and an isomorphism $\varphi : \PSL_2(\FF_{13}) \to \Aut(C)$ is, over $\Q$, isomorphic to $\Spec{K}$ where $K = K_+(\sqrt{-3\eta -2})$ is the ray class field of a prime in $\cO_{K_+}$ lying over $13$. This is the degree $6$ number field recorded as \cite[\href{https://www.lmfdb.org/NumberField/6.2.31213.1}{Number field 6.2.31213.1}]{lmfdb} in the LMFDB, and the various embeddings $K \embed \CC$ correspond to the $6$ isomorphism classes of $\PSL_2(\FF_{13})$-\Belyi curves considered above. Note that $K_+ \subset K$ is the subfield consisting of totally real elements, justifying the notation. 
\par 
For example, we can choose a particular set of spherical generators $(g_1, g_2, g_3)$ defining a surjection $\TT(2,3,7) \onto \PSL_2(\FF_{13})$,
\[ g_1 = \begin{pmatrix}
    5 & 3
    \\
    0 & 8
\end{pmatrix}
\quad \quad 
g_2 = \begin{pmatrix}
-1 & 3
\\
4 & 0
\end{pmatrix}
\quad \quad 
g_3 = \begin{pmatrix}
0 & 2
\\
6 & 6
\end{pmatrix}.
\]
This corresponds to the ``refined passport'' \cite[\href{https://www.lmfdb.org/HigherGenus/C/Aut/14.1092-25.0.2-3-7.1}{Higher Genus Family: 14.1092-25.0.2-3-7.1}]{lmfdb} with conjugacy classes $(2,3,5)$ using the labeling scheme of the LMFDB. We call this curve $C^{\nu_0}$, or to be precise $(C, \varphi)^{\nu_0}$, with its $\PSL_2(\FF_{13})$-action, for a certain fixed embedding $\nu_0 : K \embed \CC$. Because in Theorem~\ref{thm:obstruction} it is also necessary to understand partial Frobenius twists, it is insufficient to consider only $C^{\nu_0}$. We also look at the $6$ Galois conjugates corresponding to different embeddings $\nu : K \embed \CC$. These are permuted by $\Gal(K'/\Q) \cong C_2 \times A_4$ where $K'$ is the Galois closure of $K$. Generators for all $6$ isomorphism classes of $\PSL_2(\FF_{13})$-\Belyi covers can be found in \cite[\href{https://www.lmfdb.org/HigherGenus/C/Aut/14.1092-25.0.2-3-7}{Higher Genus Family: 14.1092-25.0.2-3-7}]{lmfdb}.

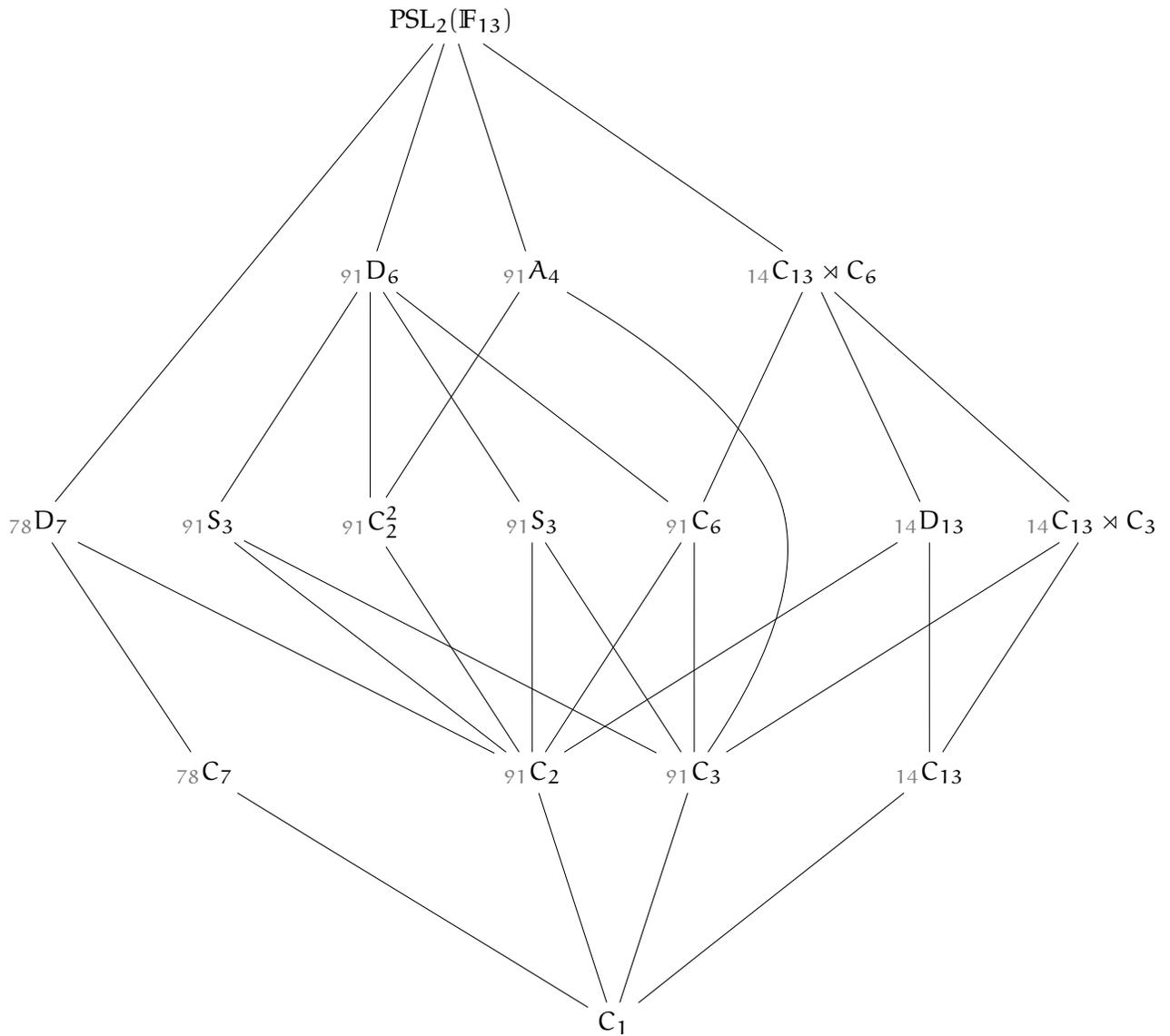
\begin{figure}
\makebox[\textwidth]{
\begin{tikzpicture}[scale = 0.8,line join=bevel]
\node (1) at (334.5bp,18.0bp) [draw,draw=none] {$C_1$};
  \node (2) at (292.5bp,148.0bp) [draw,draw=none] {${}_{\textcolor{gray}{91}} C_2$};
  \node (3) at (376.5bp,148.0bp) [draw,draw=none] {${}_{\textcolor{gray}{91}} C_3$};
  \node (4) at (122.5bp,148.0bp) [draw,draw=none] {${}_{\textcolor{gray}{78}} C_7$};
  \node (5) at (498.5bp,148.0bp) [draw,draw=none] {${}_{\textcolor{gray}{14}} C_{13}$};
  \node (6) at (208.5bp,278.0bp) [draw,draw=none] {${}_{\textcolor{gray}{91}} C_2^2$};
  \node (7) at (124.5bp,278.0bp) [draw,draw=none] {${}_{\textcolor{gray}{91}} S_3$};
  \node (8) at (376.5bp,278.0bp) [draw,draw=none] {${}_{\textcolor{gray}{91}} C_6$};
  \node (9) at (292.5bp,278.0bp) [draw,draw=none] {${}_{\textcolor{gray}{91}} S_3$};
  \node (10) at (36.5bp,278.0bp) [draw,draw=none] {${}_{\textcolor{gray}{78}} D_{7}$};
  \node (11) at (498.5bp,278.0bp) [draw,draw=none] {${}_{\textcolor{gray}{14}} D_{13} $};
  \node (12) at (582.5bp,278.0bp) [draw,draw=none] {${}_{\textcolor{gray}{14}} C_{13}\rtimes C_3$};
  \node (14) at (292.5bp,408.0bp) [draw,draw=none] {${}_{\textcolor{gray}{91}} A_4$};
  \node (13) at (208.5bp,408.0bp) [draw,draw=none] {${}_{\textcolor{gray}{91}} D_{6}$};
  \node (15) at (437.5bp,408.0bp) [draw,draw=none] {${}_{\textcolor{gray}{14}} C_{13}\rtimes C_6$};
  \node (16) at (250.5bp,538.0bp) [draw,draw=none] {$\mathrm{PSL}_2(\mathbb{F}_{13})$};
  \draw [] (1) ..controls (320.82bp,60.692bp) and (306.25bp,105.1bp)  .. (2);
  \draw [] (1) ..controls (348.18bp,60.692bp) and (362.75bp,105.1bp)  .. (3);
  \draw [] (1) ..controls (265.33bp,60.761bp) and (191.49bp,105.35bp)  .. (4);
  \draw [] (1) ..controls (388.01bp,60.761bp) and (445.13bp,105.35bp)  .. (5);
  \draw [] (2) ..controls (265.14bp,190.69bp) and (236.0bp,235.1bp)  .. (6);
  \draw [] (2) ..controls (237.69bp,190.76bp) and (179.17bp,235.35bp)  .. (7);
  \draw [] (2) ..controls (319.86bp,190.69bp) and (349.0bp,235.1bp)  .. (8);
  \draw [] (2) ..controls (292.5bp,190.69bp) and (292.5bp,235.1bp)  .. (9);
  \draw [] (2) ..controls (210.59bp,189.95bp) and (120.23bp,235.13bp)  .. (10);
  \draw [] (2) ..controls (359.71bp,190.76bp) and (431.46bp,235.35bp)  .. (11);
  \draw [] (3) ..controls (295.05bp,190.37bp) and (206.07bp,235.57bp)  .. (7);
  \draw [] (3) ..controls (376.5bp,190.69bp) and (376.5bp,235.1bp)  .. (8);
  \draw [] (3) ..controls (349.14bp,190.69bp) and (320.0bp,235.1bp)  .. (9);
  \draw [] (3) ..controls (443.71bp,190.76bp) and (515.46bp,235.35bp)  .. (12);
  \draw [] (3) ..controls (407.78bp,194.41bp) and (438.71bp,251.25bp)  .. (418.5bp,296.0bp) .. controls (399.24bp,338.64bp) and (354.6bp,371.45bp)  .. (14);
  \draw [] (4) ..controls (94.488bp,190.69bp) and (64.652bp,235.1bp)  .. (10);
  \draw [] (5) ..controls (498.5bp,190.69bp) and (498.5bp,235.1bp)  .. (11);
  \draw [] (5) ..controls (525.86bp,190.69bp) and (555.0bp,235.1bp)  .. (12);
  \draw [] (6) ..controls (208.5bp,320.69bp) and (208.5bp,365.1bp)  .. (13);
  \draw [] (6) ..controls (235.86bp,320.69bp) and (265.0bp,365.1bp)  .. (14);
  \draw [] (7) ..controls (151.86bp,320.69bp) and (181.0bp,365.1bp)  .. (13);
  \draw [] (8) ..controls (321.69bp,320.76bp) and (263.17bp,365.35bp)  .. (13);
  \draw [] (8) ..controls (396.37bp,320.69bp) and (417.53bp,365.1bp)  .. (15);
  \draw [] (9) ..controls (265.14bp,320.69bp) and (236.0bp,365.1bp)  .. (13);
  \draw [] (10) ..controls (89.858bp,343.33bp) and (197.28bp,472.83bp)  .. (16);
  \draw [] (11) ..controls (478.63bp,320.69bp) and (457.47bp,365.1bp)  .. (15);
  \draw [] (12) ..controls (535.27bp,320.69bp) and (484.97bp,365.1bp)  .. (15);
  \draw [] (13) ..controls (222.18bp,450.69bp) and (236.75bp,495.1bp)  .. (16);
  \draw [] (14) ..controls (278.82bp,450.69bp) and (264.25bp,495.1bp)  .. (16);
  \draw [] (15) ..controls (376.49bp,450.76bp) and (311.35bp,495.35bp)  .. (16);
\end{tikzpicture}}
    \caption{Subgroup lattice of $\PSL_2(\FF_{13})$ up to conjugacy. The small numbers in gray represent the number of conjugate copies of a subgroup.}
    \label{fig:subgroups}
\end{figure}

The following computations will be based on the subgroups of $\PSL_2(\FF_{13})$ and the curves appearing as intermediate quotients of $C$. The subgroup lattice of $\PSL_2(\FF_{13})$ is recorded in Figure~\ref{fig:subgroups}. $\mathrm{Out}(\PSL_2(\FF_{13})) \cong C_2$ acts trivially on this diagram except that it exchanges the two copies of $S_3$. For each subgroup $H \subset G := \PSL_2(\FF_{13})$ we can compute the intermediate curve $C \to C / H \to \P^1$ as follows. The curve $C / H \to \P^1$ is a \Belyi curve whose monodromy is given by the action of $g_1, g_2, g_3$ on the fiber $G/H$. The cycle decomposition of this action determines the ramification structure over $0,1,\infty$. Using Riemann--Hurwitz, we get a diagram of quotient curves whose genera are labeled in Figure~\ref{fig:subgroups_genera}. Notice that the genus $1$ quotients are via the two copies of $S_3$ and are hence exchanged under the outer automorphism. This suggests, as we will see is indeed the case in Appendix~\ref{appendix:arithmetic}, that the elliptic curve factor is not defined over $K_+$ but rather over the field of definition $K$ of $\Aut(C)$ whose relative Galois group $\Gal(K / K_+) \cong C_2$ acts on conjugacy classes of subgroups of $\Aut(C)$ by the full group of outer automorphisms. Additionally, as is verified in the \textsc{Magma} script \cite[\texttt{subgroup\_schemes.m}]{github}, $\Aut(C)$, as a form of $\PSL_2(\FF_{13})$ over $K_+$, admits subgroups $H \subset \Aut(C)$ which recover a copy of each of $D_7, D_6, A_4$ geometrically. Hence, the surfaces in Theorem~\ref{intro:thm:example} are defined over $K_+$ (we can also use Proposition~\ref{prop:arithmetic_resolution} of Appendix~\ref{appendix:computations} to obtain the Hirzebruch--Jung resolution over $K_+$).

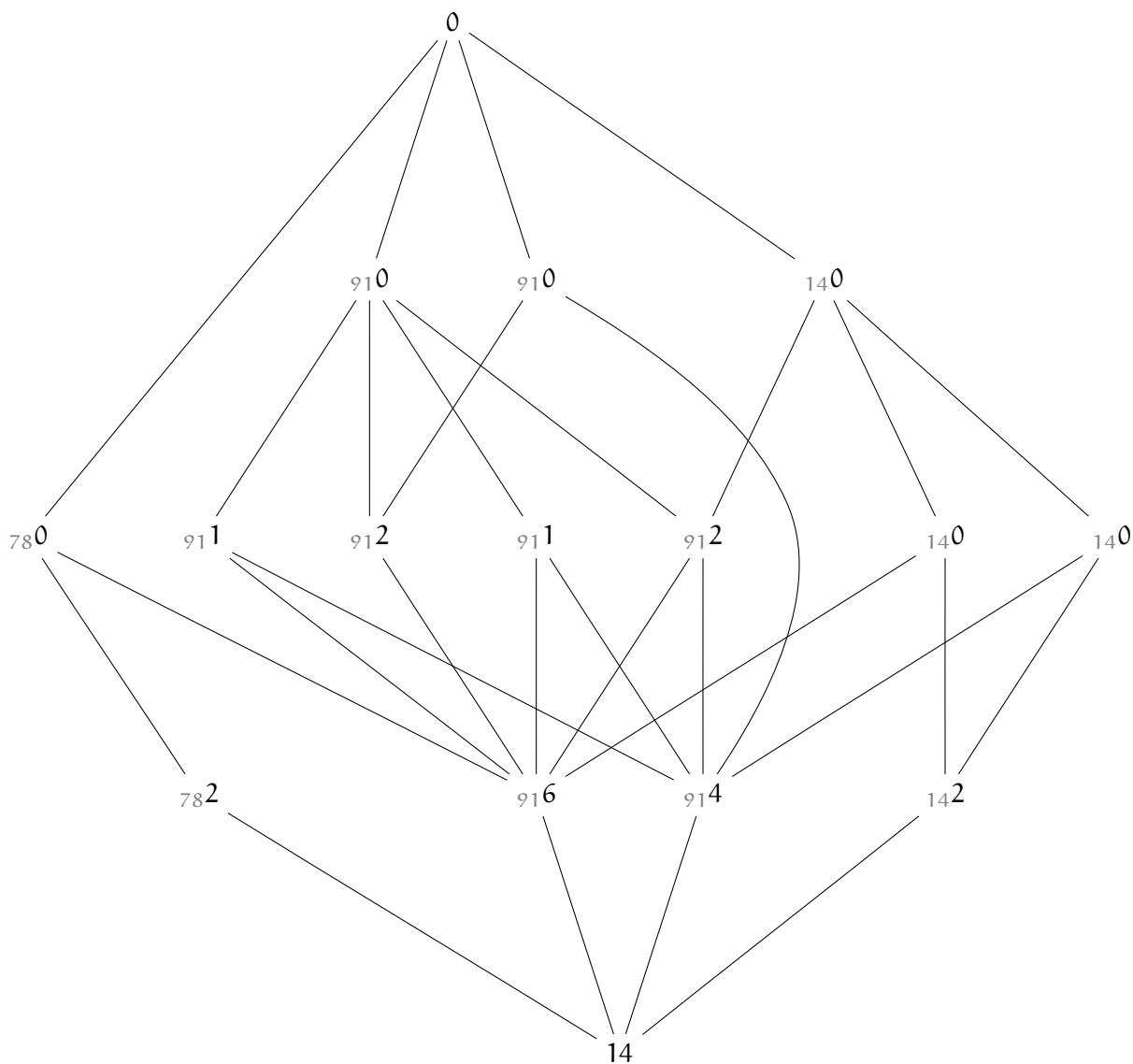
\begin{figure}
\makebox[\textwidth]{
\begin{tikzpicture}[scale = 0.8,line join=bevel]
\node (1) at (334.5bp,18.0bp) [draw,draw=none] {$14$};
  \node (2) at (292.5bp,148.0bp) [draw,draw=none] {${}_{\textcolor{gray}{91}} 6$};
  \node (3) at (376.5bp,148.0bp) [draw,draw=none] {${}_{\textcolor{gray}{91}} 4$};
  \node (4) at (122.5bp,148.0bp) [draw,draw=none] {${}_{\textcolor{gray}{78}} 2$};
  \node (5) at (498.5bp,148.0bp) [draw,draw=none] {${}_{\textcolor{gray}{14}} 2$};
  \node (6) at (208.5bp,278.0bp) [draw,draw=none] {${}_{\textcolor{gray}{91}} 2$};
  \node (7) at (124.5bp,278.0bp) [draw,draw=none] {${}_{\textcolor{gray}{91}} 1$};
  \node (8) at (376.5bp,278.0bp) [draw,draw=none] {${}_{\textcolor{gray}{91}} 2$};
  \node (9) at (292.5bp,278.0bp) [draw,draw=none] {${}_{\textcolor{gray}{91}} 1$};
  \node (10) at (36.5bp,278.0bp) [draw,draw=none] {${}_{\textcolor{gray}{78}} 0$};
  \node (11) at (498.5bp,278.0bp) [draw,draw=none] {${}_{\textcolor{gray}{14}} 0$};
  \node (12) at (582.5bp,278.0bp) [draw,draw=none] {${}_{\textcolor{gray}{14}} 0$};
  \node (14) at (292.5bp,408.0bp) [draw,draw=none] {${}_{\textcolor{gray}{91}} 0$};
  \node (13) at (208.5bp,408.0bp) [draw,draw=none] {${}_{\textcolor{gray}{91}} 0$};
  \node (15) at (437.5bp,408.0bp) [draw,draw=none] {${}_{\textcolor{gray}{14}} 0$};
  \node (16) at (250.5bp,538.0bp) [draw,draw=none] {$0$};
  \draw [] (1) ..controls (320.82bp,60.692bp) and (306.25bp,105.1bp)  .. (2);
  \draw [] (1) ..controls (348.18bp,60.692bp) and (362.75bp,105.1bp)  .. (3);
  \draw [] (1) ..controls (265.33bp,60.761bp) and (191.49bp,105.35bp)  .. (4);
  \draw [] (1) ..controls (388.01bp,60.761bp) and (445.13bp,105.35bp)  .. (5);
  \draw [] (2) ..controls (265.14bp,190.69bp) and (236.0bp,235.1bp)  .. (6);
  \draw [] (2) ..controls (237.69bp,190.76bp) and (179.17bp,235.35bp)  .. (7);
  \draw [] (2) ..controls (319.86bp,190.69bp) and (349.0bp,235.1bp)  .. (8);
  \draw [] (2) ..controls (292.5bp,190.69bp) and (292.5bp,235.1bp)  .. (9);
  \draw [] (2) ..controls (210.59bp,189.95bp) and (120.23bp,235.13bp)  .. (10);
  \draw [] (2) ..controls (359.71bp,190.76bp) and (431.46bp,235.35bp)  .. (11);
  \draw [] (3) ..controls (295.05bp,190.37bp) and (206.07bp,235.57bp)  .. (7);
  \draw [] (3) ..controls (376.5bp,190.69bp) and (376.5bp,235.1bp)  .. (8);
  \draw [] (3) ..controls (349.14bp,190.69bp) and (320.0bp,235.1bp)  .. (9);
  \draw [] (3) ..controls (443.71bp,190.76bp) and (515.46bp,235.35bp)  .. (12);
  \draw [] (3) ..controls (407.78bp,194.41bp) and (438.71bp,251.25bp)  .. (418.5bp,296.0bp) .. controls (399.24bp,338.64bp) and (354.6bp,371.45bp)  .. (14);
  \draw [] (4) ..controls (94.488bp,190.69bp) and (64.652bp,235.1bp)  .. (10);
  \draw [] (5) ..controls (498.5bp,190.69bp) and (498.5bp,235.1bp)  .. (11);
  \draw [] (5) ..controls (525.86bp,190.69bp) and (555.0bp,235.1bp)  .. (12);
  \draw [] (6) ..controls (208.5bp,320.69bp) and (208.5bp,365.1bp)  .. (13);
  \draw [] (6) ..controls (235.86bp,320.69bp) and (265.0bp,365.1bp)  .. (14);
  \draw [] (7) ..controls (151.86bp,320.69bp) and (181.0bp,365.1bp)  .. (13);
  \draw [] (8) ..controls (321.69bp,320.76bp) and (263.17bp,365.35bp)  .. (13);
  \draw [] (8) ..controls (396.37bp,320.69bp) and (417.53bp,365.1bp)  .. (15);
  \draw [] (9) ..controls (265.14bp,320.69bp) and (236.0bp,365.1bp)  .. (13);
  \draw [] (10) ..controls (89.858bp,343.33bp) and (197.28bp,472.83bp)  .. (16);
  \draw [] (11) ..controls (478.63bp,320.69bp) and (457.47bp,365.1bp)  .. (15);
  \draw [] (12) ..controls (535.27bp,320.69bp) and (484.97bp,365.1bp)  .. (15);
  \draw [] (13) ..controls (222.18bp,450.69bp) and (236.75bp,495.1bp)  .. (16);
  \draw [] (14) ..controls (278.82bp,450.69bp) and (264.25bp,495.1bp)  .. (16);
  \draw [] (15) ..controls (376.49bp,450.76bp) and (311.35bp,495.35bp)  .. (16);
\end{tikzpicture}}
    \caption{Lattice of intermediate curves of the cover $C \to \P^1$ up to conjugacy with their genera marked. The small numbers in gray represent the number of isomorphic conjugate copies of a curve represented as a particular quotient.}
    \label{fig:subgroups_genera}
\end{figure}

In order for the product-quotient surface $X \to (C^{\nu_0} \times C^{\nu_0})/H$ to have zero irregularity, we must choose $H \subset G$ so that $C / H$ has genus $0$. Among these, the minimal groups having rational quotients are most promising since the larger $\# H$ the more difficult it becomes for $(K_X - E)^2 > 0$ to be satisfied. From these figures, the minimal groups with genus $0$ quotient are $D_7, D_6, A_4$, and $C_6 \rtimes C_3$. It turns out, the first three, but not the last, will produce simply-connected examples with $(K_X - E)^2 > 0$.

\subsubsection{Supersingularity} 

The Hurwitz curve $C$ of genus $14$ has the amiable property that its Jacobian is isogenous to $14$ copies of some elliptic curve $E$. Moreover, this curve $E$ is defined over the field $K = (K_+)^{\p_{13} \infty} = K_+(\sqrt{-3 \eta - 2})$ by Theorem~\ref{app:thm:isogeny_decomp_defined_over_K} in Appendix~\ref{appendix:arithmetic}. Since $K$ has a real place, we may apply Elkies' theorem \cite{Elkies} to conclude that there are infinitely many places of $K$ at which $E$ has supersingular reduction. Since supersingularity is a geometric property, at each prime $\p \subset \cO_{K_+}$ lying under a prime in $\cO_K$ of supersingular reduction for $E$ where $X_{\p}$ has good reduction, we conclude that $X_{\p}$ is supersingular by Corollary~\ref{cor:curves_supersingular}.  

\subsection{Proof of Theorem~\ref{intro:thm:example}}

For any product-quotient surface $X$ dominated by a product of Hurwitz curves of genus $14$, we saw in the previous section that $X$ has infinitely many primes of supersingular reduction. 
In order for $X$ to violate Shioda's conjecture, we must show that $\pi_1(X) = 1$ and, to apply Theorem~\ref{thm:obstruction}, that its asymmetric Frobenius twists all carry diagonal symmetric forms.
\par 
For the minimal resolution $\pi : X \to Y := (C \times C)/H$ of each surface defined by a subgroup $H \subset \PSL_2(\FF_{13})$ for $H$ among the copies of $D_7, D_6, A_4$ in $\PSL_2(\FF_{13})$, we will verify these claims. Fix the following notation: for $\sigma, \tau \in \Gal(K'/\Q)$, denote by $\pi^{\sigma, \tau} : X^{\sigma, \tau} \to ((C^{\nu_0})^{\sigma} \times (C^{\nu_0})^{\tau}) / H$ the minimal resolution, where $H$ acts diagonally using the fixed inclusion $H \subset G$ and the induced actions under the Galois twists of $G$. We will need to check
\begin{enumerate}
    \item $\pi_1(X_{\CC}) = 1$
    \item for all $\sigma, \tau \in \Gal(K'/\Q)$ we have $(K_{X^{\sigma, \tau}} - \Exc{\pi^{\sigma, \tau}})^2 > 0$
\end{enumerate}

Let us first see why this is sufficient to prove Theorem~\ref{intro:thm:example} by an application of Theorem~\ref{thm:obstruction} and Theorem~\ref{thm:obstruction_elliptic}. Let $f : \X \to \Spec{\cO_{K',S}}$ be a spreading out as a smooth projective morphism over the ring of $S$-integers of $K'$ away from some finite set of primes $S$. An explicit set $S$ is computed in Appendix~\ref{appendix:computations}. For any prime $\frP \subset \cO_{K',S}$, by Artin comparison \cite[Expos\'{e} XII, Cor. 5.2]{SGA1} and surjectivity of Grothendieck's specialization maps \cite[Expos\'{e} X, Cor. 2.3]{SGA1}, the geometric fiber of $\X$ is simply-connected, $\pi_1^{\et}(\X_{\ol{\frP}}) = 1$. By Corollary~\ref{cor:numerical_criterion}, $X^{\sigma, \tau}$ carries a diagonal symmetric form whose zero locus contains a big divisor. Likewise, spread out $X^{\sigma, \tau}$ to $f^{\sigma, \tau} : \X^{\sigma, \tau} \to \Spec{\cO_{K', S}}$. For any prime $\frP \subset \cO_{K',S}$ there exists $\Frob_{\frP} \in \Gal(K'/\Q)$ in the decomposition group of $\frP$ lifting Frobenius of the residue field $\cO_{K'} / \frP$. Therefore, setting $\sigma := (\Frob_{\frP})^a$ and $\tau := (\Frob_{\frP})^b$ we obtain that the fiber $(\X^{\sigma, \tau})_{\frP}$ is naturally isomorphic to $(\X_{\frP})^{a,b}$ in the notation of \S\ref{sec:obstruction}. By upper semi-continuity, $(\X^{\sigma, \tau})_{\frP}$ also carries a diagonal symmetric form whose zero locus contains a big divisor. Hence, Theorem~\ref{thm:obstruction} and Theorem~\ref{thm:obstruction_elliptic} apply to the geometric fibers $\X_{\ol{\frP}}$.
\par 
It remains to check the numerical claims, which are computable over any field extension. For the remainder of this section, everything is base-changed to $\CC$ along $\nu_0 : K \embed \CC$ without comment.
To perform the calculations in each case, we need to know the spherical generators for the monodromy of an intermediate cover $C \to C / H \cong \P^1$. This is computed as follows. Consider the factorization $C \to C / H \to C / G \cong \P^1$. Above each ramification point $p_i \in \P^1$ of $f : C \to \P^1$, the fiber of $C / H \to \P^1$ is in bijection with the cycle decomposition of the monodromy $g_i$ action on $G/H$. There is a loop $\gamma_{ij}$ around each point of the fiber corresponding to a cycle $\sigma_{ij}$ in this decomposition. To compute the monodromy of $C \to C / H$, we need to know the monodromy around each $\gamma_{ij}$. Let $\ell$ be the order of $\sigma_{ij}$; by definition $g_i^{\ell}$ fixes any coset $H r$ appearing in the cycle $\sigma_{ij}$. This means $r g_i^{\ell}$ is conjugate to $r$ by some element $h_{ij} \in H$ well-defined up to $H$-conjugacy. The conjugacy class of $h_{ij}$ is the conjugacy class of the monodromy action of $\gamma_{ij}$ on the fiber $H$ of $C \to C / H$. Note that this procedure only determines each monodromy element up to conjugacy and additional care must be taken to extract spherical generators, i.e., a well-defined map $\pi_1( (C/H)^\circ ) \to H$ since there may be multiple inequivalent covers defined by spherical generators with the same conjugacy classes. Luckily for the purposes of computing baskets of singularities, any two choices are equivalent (see Remark~\ref{rmk:baseket_depends_only_conjugacy_multiset}). 
\par 
From here, we can verify $\pi_1(Y) = 1$ and compute the basket of singularities of $Y$. The explicit computation of the basket $\cB$ is done as in \S\ref{section:basket}. 
Using Lemma~\ref{lemma:fund_group_torsion}, it suffices to show that $\pi_1(Y^{\circ})$ is normally generated by torsion. For each example, we will apply Proposition~\ref{prop:extends_good_presentation_normally_generated} to conclude that $\pi_1(X) = 1$. Finally, we will directly verify $(K_{X^{\sigma, \tau}} - \Exc{\pi^{\sigma, \tau}})^2 > 0$ by applying Proposition~\ref{prop:numerical_volume}.

\subsubsection{Case: $H \cong D_{7}$} \label{section:case_D7}

Let $Y = (C \times C)/H$ with $H$ acting diagonally and $\pi : X \to Y$ the minimal resolution. 
Using the method described in the preceding section, we compute that $g : C \to C / H$ is ramified at $7$ points with conjugacy classes
\[ \Cl(s), \Cl(s), \Cl(s), \Cl(s), \Cl(s), \Cl(s), \Cl(r). \]
This means $g : C \to C / H \cong \P^1$ corresponds to the refined passport 
\cite[\href{https://www.lmfdb.org/HigherGenus/C/Aut/14.14-1.0.2-2-2-2-2-2-7.1}{Higher Genus Family: 14.14-1.0.2-2-2-2-2-2-7.1}]{lmfdb} with conjugacy classes $(2, 2, 2, 2, 2, 2, 3)$ using the labeling scheme of the LMFDB. The spherical generators of $g : C \to \P^1$ viewed as a $D_{7}$-cover fit the description of Example~\ref{example:odd_dihedral}. Therefore, applying Proposition~\ref{prop:extends_good_presentation_normally_generated}, we see that $\pi_1(X) = 1$. 
\par 
Using the method described in \S\ref{section:basket}, $Y$ has the following basket of singularities
\[ \cB(Y) := \{ 36 \times A_1, 1 \times A_6, 1 \times \tfrac{1}{7} (1,1) \}. \]
Plugging into Proposition~\ref{prop:chern_numerics} gives Chern numbers:
\[ K_X^2 = 93 \quad \quad c_2(X) = 111. \]
Using Proposition~\ref{cor:numerical_criterion}:
\[ (K_X - E)^2 = 93 - 2 \cdot 36 - 2 \cdot 1 - 17 \cdot 1 = 2 \]
is positive and hence $\omega_X(-E)$ is big. This surface has Hodge diamond:
\begin{center}
\begin{tabular}{ccccc}
 &  & $1$ &  &  \\
 & $0$ &  & $0$ &  \\
$16$ &  & $77$ &  & $16$ \\
 & $0$ &  & $0$ &  \\
 &  & $1$ &  &  \\
\end{tabular}    
\end{center}
and is of general type. It is a routine matter to repeat these calculations for $X^{\sigma, \tau}$. The \textsc{Magma} script \texttt{verify\textunderscore examples.m}, available on \texttt{github} at \cite{github}, performs the necessary checks.

\begin{example} \label{example:twists_have_different_numerics}
A surprising phenomenon appears in this example: $X$ and $X^{\sigma, \tau}$ do not even have the same numerical invariants. For example, among the $X^{\sigma, \tau}$ for $H \cong D_7$ is the product-quotient surface $X'$ whose defining pair of spherical generators for $D_7$ have conjugacy classes,
\[ \Cl(s), \Cl(s), \Cl(s), \Cl(s), \Cl(s), \Cl(s), \Cl(r). \]
and
\[ \Cl(s), \Cl(s), \Cl(s), \Cl(s), \Cl(s), \Cl(s), \Cl(r^2) \]
which leads to a basket of singularities:
\[ \cB(Y') := \{ 36 \times A_1, 1 \times \frac{1}{7}(1,3), 2 \times \tfrac{1}{7} (1,4) \}. \]
which is different from $X$, replacing the $\frac{1}{7}(1,1)$ and $\frac{1}{7}(1,6)$ singularities by the pair $\frac{1}{7}(1,3)$ and $\frac{1}{7}(1,4)$. This leads to different numerics. It has Chern numbers,
\[ K_{X'}^2 = 95 \quad \quad c_2(X') = 109 \]
and thus a Hodge diamond
\begin{center}
\begin{tabular}{ccccc}
 &  & $1$ &  &  \\
 & $0$ &  & $0$ &  \\
$17$ &  & $73$ &  & $17$ \\
 & $0$ &  & $0$ &  \\
 &  & $1$ &  &  \\
\end{tabular}    
\end{center}
Finally, using Corollary~\ref{cor:numerical_criterion},
\[ (K_{X'} - E')^2 = 95 - 2 \cdot 36 - 5 \cdot 1 - 8 \cdot 1 = 10. \]
\end{example}

\subsubsection{Case: $H \cong D_{6}$} \label{section:case_D6}

Let $Y = (C \times C)/H$ with $H$ acting diagonally and $\pi : X \to Y$ the minimal resolution. 
Using the method described in the preceding section, we compute that $g : C \to C / H$ is ramified at $8$ points with conjugacy classes
\[ \Cl(r^3), \Cl(s), \Cl(s), \Cl(s), \Cl(rs), \Cl(rs), \Cl(rs), \Cl(r^2). \]
This means $g : C \to C / H \cong \P^1$ corresponds to the refined passport 
\cite[\href{https://www.lmfdb.org/HigherGenus/C/Aut/14.12-4.0.2-2-2-2-2-2-2-3.5}{Higher Genus Family: 14.12-4.0.2-2-2-2-2-2-2-3.5}]{lmfdb} with conjugacy classes $(2, 3, 3, 3, 4, 4, 4, 5)$ using the labeling scheme of the LMFDB. We can use \textsc{Magma} to enumerate the possible spherical generators with these prescribed conjugacy classes and for each check explicitly that they extend a good presentation. Therefore, applying Proposition~\ref{prop:extends_good_presentation_normally_generated}, we see that $\pi_1(X) = 1$. 
\par 
Using the method described in \S\ref{section:basket}, $Y$ has the following basket of singularities
\[ \cB(Y) := \{ 42 \times A_1, 2 \times A_2, 2 \times \tfrac{1}{3} (1,1) \}. \]
Plugging into Proposition~\ref{prop:chern_numerics} gives Chern numbers:
\[ K_X^2 = 112 \quad \quad c_2(X) = 128 \]
and using Corollary~\ref{cor:numerical_criterion}:
\[ (K_X - E)^2 = 112 - 2 \cdot 42 - 2 \cdot 2 - 5 \cdot 2 = 14 \]
is positive and hence $\omega_X(-E)$ is big. This surface has Hodge diamond:
\begin{center}
\begin{tabular}{ccccc}
 &  & $1$ &  &  \\
 & $0$ &  & $0$ &  \\
$19$ &  & $88$ &  & $19$ \\
 & $0$ &  & $0$ &  \\
 &  & $1$ &  &  \\
\end{tabular}    
\end{center}
and is of general type. Unlike the $D_7$ case, all partial twists have the same Hodge diamond and $(K_X - E)^2$, checked in \texttt{verify\textunderscore examples.m}.

\subsubsection{Case: $H \cong A_4$} \label{section:case_A4}

Let $Y = (C \times C)/H$ with $H$ acting diagonally and $\pi : X \to Y$ the minimal resolution. Fix notation $a = (1 \, 2)(3 \, 4)$ and $b = (1 \, 2 \, 3)$ and $c = (1 \, 3 \, 2)$.
Using the method described in the preceding section, we compute that $g : C \to C / H$ is ramified at $7$ points with conjugacy classes,
\[ \Cl(a), \Cl(a), \Cl(a), \Cl(b), \Cl(b), \Cl(c), \Cl(c). \]
This means $g : C \to C / H \cong \P^1$ corresponds to the refined passport 
\cite[\href{https://www.lmfdb.org/HigherGenus/C/Aut/14.12-3.0.2-2-2-3-3-3-3.1}{Higher Genus Family: 14.12-3.0.2-2-2-3-3-3-3.1}]{lmfdb} with conjugacy classes $(2, 2, 2, 3, 3, 4, 4)$ using the labeling scheme of the LMFDB. We can use \textsc{Magma} to enumerate the possible spherical generators with these prescribed conjugacy classes and for each check explicitly that they extend a good presentation. Therefore, applying Proposition~\ref{prop:extends_good_presentation_normally_generated}, we see that $\pi_1(X) = 1$. 
\par 
Using the method described in \S\ref{section:basket}, $Y$ has the following basket of singularities:
\[ \cB(Y) := \{ 18 \times A_1, 8 \times A_2, 8 \times \tfrac{1}{3} (1,1) \}. \]
Plugging into Proposition~\ref{prop:chern_numerics} gives Chern numbers,
\[ K_X^2 = 110 \quad \quad c_2(X) = 118. \]
and using Corollary~\ref{cor:numerical_criterion}:
\[ (K_X - E)^2 = 110 - 2 \cdot 18 - 2 \cdot 8 - 5 \cdot 8 = 18 \]
is positive and hence $\omega_X(-E)$ is big. This surface has Hodge diamond:
\begin{center}
\begin{tabular}{ccccc}
 &  & $1$ &  &  \\
 & $0$ &  & $0$ &  \\
$18$ &  & $80$ &  & $18$ \\
 & $0$ &  & $0$ &  \\
 &  & $1$ &  &  \\
\end{tabular}    
\end{center}
and is of general type. Unlike the $D_7$ case, all partial twists have the same Hodge diamond and $(K_X - E)^2$, checked in \texttt{verify\textunderscore examples.m}.

\appendix

\section{Arithmetic of the first Hurwitz triplet} \label{appendix:arithmetic}

We survey Shimura's groundbreaking work \cite{shimura:curves} constructing canonical models of Shimura curves with an eye toward the arithmetic of certain Hurwitz curves. The main purpose of this appendix is to prove that the elliptic curve $E$ appearing as an isogeny factor in the Jacobian of a genus $14$ Hurwitz curve $C$ is defined over a number field with a real place; see Theorem~\ref{app:thm:isogeny_decomp_defined_over_K}. This is necessary to apply Elkies' theorem \cite{Elkies} to show that $C$ has infinitely many primes of supersingular reduction.

\subsection{Canonical models of Shimura curves}

The exposition follows \S2--3 of \cite{shimura:curves}. Let $F$ be a totally real number field and $B$ a quaternion algebra over $F$. Write $g = [F : \Q]$ and $\frD(B/F)$ for the product of the non-archimedean primes of $F$ at which $B$ is \textit{non-split}, meaning $B \ot_F F_{\p}$ is a division algebra over $F_{\p}$. At all other finite places $B$ is \textit{split} (or \textit{ramified}) meaning $B \ot_F F_{\p} \cong M_2(F_{\p})$. Let $\nu_1, \dots, \nu_g : F \embed \RR$ be the archimedean places ordered such that $B$ is split at $\nu_1, \dots, \nu_r$ and non-split at $\nu_{r+1}, \dots, \nu_{g}$. We also choose an isomorphism compatible with these fixed places
\[ B \ot_{\QQ} \RR \iso M_2(\RR)^r \times \HH^{g-r} \]
where $\HH$ are the Hamiltonian quaternions. Assuming $r > 0$, this defines an embedding
\[ \iota_{\infty} : B \embed M_2(\RR)^r. \]
Let $B^+$ denote the elements of $B$ with totally positive reduced norm (note that Shimura writes $N_{B/F} : B \to F$ for the reduced norm whereas often this denotes the full norm $(\Nm_{B/F})^2$ where $\Nm_{B/F}$, often written as $\mathrm{nrd}_{B/F}$, is the reduced norm).  Equivalently, since the definite quaternion algebra $\HH$ has only elements of positive norm, these are the elements $\gamma \in B$ so that $\iota_{\infty}(\gamma)$ is a sequence of real matrices with positive determinant.
\par 
Via the embedding $\iota_{\infty} : B^+ \to \GL_2^+(\RR)^r$ we get an action $B^+ \acts \frH$ where $\frH \subset \CC$ is the upper half-plane. Quotients by special subgroups of $B^+$ will lead to Shimura varieties. The associated Shimura datum is for the algebraic group $G := \Res_{F/\Q} B^\times$, the functor $A \mapsto (B \times_{\Q} A)^\times$ on $\Q$-algebras, with the conjugacy class $X$ generated by the map $h : \SS \to G_{\RR}$ defined by 
\[ \CC^\times \to \GL_2(\RR)^r \times \HH^{g-r} \quad \quad x + i y \mapsto \left( \begin{pmatrix}
    x & y 
    \\
    -y & x 
\end{pmatrix}, \dots, \begin{pmatrix}
    x & y 
    \\
    -y & x 
\end{pmatrix}, 1, \dots, 1 \right).  \]
In general, the Shimura datum $(G, X)$ is of abelian type but not necessarily of Hodge type \cite{milne}. Fix a maximal order $\cQ \subset B$ and set $\Gamma(\cQ, 1) := \cQ^\times \cap B^+$. The action $\Gamma(\cQ, 1) \acts \frH^r$ is totally discontinuous and cocompact as long as $B$ is not split over $F$. For a finite index subgroup $\Gamma \subset \Gamma(\cQ, 1)$ the quotients $Y = \frH^r / \Gamma$ are the complex points of the associated Shimura variety. For example:
\begin{enumerate}
    \item when $B = M_2(F)$ then $\cQ = M_2(\cO_F)$ and $Y$ is a Hilbert-Blumenthal modular variety which parametrizes abelian $2g$-folds with real multiplication by $\cO_F$
    \item $F = \Q$ and $B$ is a non-split indefinite quaternion algebra with discriminant $d$ then $Y$ is a Shimura curve parameterizing abelian surfaces with QM by $B$
    \item likewise, when $B$ is non-split over $F$ and $r = g$ then $Y$ is an honest moduli space parameterizing abelian $2g$-folds with QM by $\cQ$
    \item when $r = 1$ these $Y$ are the \textit{Shimura curves} but for $g > 1$ these are not honest moduli spaces of abelian varieties.
\end{enumerate}

These moduli interpretations make sense over a number field, leading to \textit{canonical models} for the modular Shimura varieties over this field. As in Shimura's original article, we focus here on the case $r = 1$. Even when $g > 1$ and the Shimura curve is not an honest moduli space, Shimura was able to construct canonical models over number fields by using isogenies to other moduli spaces. For an ideal $\frJ \subset \cO_F$, Shimura defines \textit{congruence subgroups}
\[ \Gamma(\cQ, \frJ) := \{ \gamma \in \cQ^\times \cap B^+ \mid \gamma - 1 \in \frJ \cQ \} \]
and he produces a canonical model for $\frH / \Gamma(\cQ, \frJ)$ as follows.

\begin{theorem}\cite[(3.2) Main Theorem I and (3.3)]{shimura:curves} \label{thm:canonical_models}
Let $F^{(\frJ \infty)}$ be the narrow ray class field of $\frJ$. Fix an embedding $F^{(\frJ \infty)} \embed \CC$ compatible with $\nu_1 : F \embed \RR$. Then there exists a curve $C$ defined over $F^{(\frJ \infty)}$ and a holomorphic map $\varphi : \frH \to C_{\CC}$ such that
\begin{enumerate}
    \item $\varphi$ descends to a biholomorphism $\frH  /\Gamma(\cQ, \frJ) \iso C_{\CC}$
    \item For any $K/F$ totally imaginary quadratic extension so that $B \ot_F K \cong M_2(K)$ and embedding $K \embed \CC$ compatible with $\nu_1 : F \embed \RR$ and any $F$-linear embedding $f : K \embed B$ so that $f(\cO_K) \subset \cQ$, let $z$ be the fixed point of $f(K) \acts \frH$; then $K^{(\frJ' \infty)}$ equals the compositum $K \cdot F^{(\frJ \infty)}(\varphi(z)) = K^{(\frJ' \infty)}$ where $\frJ' := \frJ \cO_K$ and $F^{(\frJ \infty)}(\varphi(z))$ is the field of definition of the point $\varphi(z) \in C(\ol{\Q})$.  
\end{enumerate}
Furthermore, let $(C, \varphi)$ and $(C', \varphi')$ be two such pairs. Then there exists an isomorphism $j : C \to C'$ defined over $F^{(\frJ \infty)}$ such that $\varphi' = j_{\CC} \circ \varphi$. 
\end{theorem}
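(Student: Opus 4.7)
The plan is to characterize the canonical model $(C, \varphi)$ by specifying the Galois action on the set of CM points via class field theory, and then to prove existence by reducing to a situation with an abelian-variety moduli interpretation. Uniqueness will follow once the Galois action on CM points is fully determined, since those points are analytically dense in $C_{\CC}$.

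First I would enumerate the CM points on $\frH / \Gamma(\cQ, \frJ)$. For each totally imaginary quadratic extension $K/F$ that splits $B$ and each $F$-linear embedding $f : K \embed B$ with $f(\cO_K) \subset \cQ$, the group $f(K^{\times})$ acts on $\frH$ via $\iota_{\infty}$ with a unique fixed point $z_f$, and I would use strong approximation on $B^{\times}$ to show that as $(K, f)$ vary the images $\varphi(z_f)$ are analytically dense in $C_{\CC}$ and that the $\Gamma(\cQ, \frJ)$-orbits among them are a torsor for an appropriate ray class group of $K$.

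For existence, since $r = 1$ and $g > 1$ the Shimura curve $\frH / \Gamma(\cQ, \frJ)$ is not itself a moduli space of abelian varieties. The trick is to embed the Shimura datum $(G, X)$, where $G = \Res_{F/\Q} B^{\times}$, into an auxiliary PEL datum $(G', X')$ — for instance via an inclusion $B \embed B \otimes_F F'$ for a totally real extension $F'/F$ chosen so that $B \otimes_F F'$ splits at every real place of $F'$ — whose associated Shimura variety parameterizes abelian varieties with $\cQ$-multiplication and $\frJ$-level structure. By Baily--Borel this larger Shimura variety is quasi-projective over $\CC$, and by Shimura's construction via theta series (or, with modern tools, by descent results of Deligne and Milne) it admits a canonical model over its reflex field. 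Descending along $(G, X) \hookrightarrow (G', X')$ yields $C$ over the expected field; identifying this field with $F^{(\frJ \infty)}$ reduces to computing the reflex field of $(G, X)$ (which is $F$ embedded via $\nu_1$) together with Shimura reciprocity for the added level structure.

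The hardest part is verifying the reciprocity statement (2): $K^{(\frJ' \infty)} = K \cdot F^{(\frJ \infty)}(\varphi(z_f))$. This is Shimura's main reciprocity law for Shimura curves, and the plan is, for each idele class $a$ of $K$ prime to $\frJ'$, to show that $\sigma_a(\varphi(z_f)) = \varphi(f(\hat a)^{-1} \cdot z_f)$, where $\sigma_a$ is the Artin symbol and $f(\hat a)$ is the image of $a$ in the adelic group of $B$. Transported to the auxiliary moduli setting from the previous step, this becomes a direct consequence of the Shimura--Taniyama formula for the Galois action on CM abelian varieties. Combined with the standard description of $K^{(\frJ' \infty)}$ by its fixed idele classes, this gives the required equality of fields. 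Uniqueness then follows essentially formally: any two pairs $(C, \varphi)$ and $(C', \varphi')$ satisfying (1) and (2) have matched sets of CM points with the same residue fields, so the analytic isomorphism $C_{\CC} \iso C'_{\CC}$ induced by $\varphi$ and $\varphi'$ is Galois-equivariant on a dense set of algebraic points and therefore descends to the required isomorphism $j : C \to C'$ defined over $F^{(\frJ \infty)}$.
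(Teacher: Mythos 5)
First, a point of comparison: the paper does not prove this statement. It is quoted (up to notation) directly from Shimura's 1967 Annals paper, cited as (3.2) Main Theorem I and (3.3), so there is no in-paper proof to measure your proposal against; it has to be judged against the argument in the literature (Shimura's original proof, or the modern treatment via Deligne and Carayol).

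The architecture you describe -- pin the model down by the prescribed Galois action on a dense set of CM points, construct it by transport from an auxiliary Shimura variety that has a genuine moduli interpretation, verify the reciprocity law there via Shimura--Taniyama, and deduce uniqueness from Galois-equivariance on the dense set -- is the correct one, and your uniqueness and reciprocity steps are fine as sketches. But the existence step contains a genuine error. You propose to embed $(G,X)$ into the datum attached to $B \otimes_F F'$ for a totally real extension $F'/F$ ``chosen so that $B \otimes_F F'$ splits at every real place of $F'$.'' No such $F'$ exists when $r < g$: if $w$ is a real place of $F'$ lying over a real place $v$ of $F$, then $F'_w = F_v = \RR$, hence $(B\otimes_F F')\otimes_{F'} F'_w \cong B \otimes_F F_v$, so a totally real base change cannot alter the archimedean ramification of $B$; the places where $B$ is a Hamilton algebra stay that way. (Even if such an $F'$ existed, the resulting totally indefinite algebra would produce a Shimura variety of dimension $[F':\Q]$, and you would still owe an argument that the curve embeds compatibly with canonical models.) The standard fix is to use a totally imaginary quadratic extension $K_0/F$ splitting $B$ and pass to the unitary similitude group of a $B\otimes_F K_0$-hermitian module, whose Shimura variety is of PEL type and carries the needed moduli of abelian varieties; this is the construction of Deligne and Carayol, and essentially the device in Shimura's original argument as well. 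With that replacement the outline goes through, though you should also explain why the geometrically connected component $\frH/\Gamma(\cQ,\frJ)$ is defined over the narrow ray class field $F^{(\frJ\infty)}$ rather than merely over the reflex field $\nu_1(F)$: that requires identifying $\pi_0$ of the full adelic Shimura variety via the reduced norm and class field theory, which is precisely where the narrow ray class group enters.
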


Due to the uniqueness property, we call such a pair $(C, \varphi)$ a \textit{canonical model} and denote $C$ by $\X(\cQ, \frJ)$. This uniqueness property has the following consequence for automorphisms of $\X(\cQ, \frJ)$. Note that $\Gamma(\cQ, \frJ) \triangleleft \Gamma(\cQ, 1)$ so there is a well-defined action $\Gamma(\cQ, 1) \acts \frH/\Gamma(\cQ, \frJ)$ which induces an action on $\X(\cQ, \frJ)_{\CC}$ via $\varphi$.

\begin{lemma} \label{lem:automorphisms_defined_over_reflex}
$\Gamma(\cQ, 1)$ acts on $\X(\cQ, \frJ)$ via automorphisms \textit{defined over} $F^{(\frJ \infty)}$.
\end{lemma}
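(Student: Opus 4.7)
The plan is to deduce the statement from the uniqueness clause of Theorem~\ref{thm:canonical_models}. Given $\gamma \in \Gamma(\cQ, 1)$, I want to show that the pair $(\X(\cQ, \frJ),\, \varphi \circ \gamma)$ is itself a canonical model in the sense of that theorem. Then uniqueness will produce an $F^{(\frJ\infty)}$-isomorphism $j_\gamma : \X(\cQ,\frJ) \to \X(\cQ,\frJ)$ with $\varphi \circ \gamma = (j_\gamma)_{\CC} \circ \varphi$, and $\gamma \mapsto j_\gamma$ is the desired action.

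First I would check condition (1). Because elements of $\cQ^\times$ are ring-theoretic units of $\cQ$, conjugation by $\gamma$ preserves $\cQ$, and hence the condition $\delta - 1 \in \frJ\cQ$ defining $\Gamma(\cQ, \frJ)$: indeed $\gamma \delta \gamma^{-1} - 1 = \gamma(\delta-1)\gamma^{-1} \in \gamma (\frJ \cQ) \gamma^{-1} = \frJ \cQ$. So $\Gamma(\cQ, \frJ) \triangleleft \Gamma(\cQ, 1)$, and $\gamma$ normalizes $\Gamma(\cQ,\frJ)$; consequently $\varphi \circ \gamma$ still factors through a biholomorphism $\frH/\Gamma(\cQ,\frJ) \iso \X(\cQ,\frJ)_{\CC}$, verifying condition (1) for the twisted pair.

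Next I would verify condition (2). Let $K/F$ be a totally imaginary quadratic extension splitting $B$, and let $f : K \embed B$ be $F$-linear with $f(\cO_K) \subset \cQ$ and fixed point $z \in \frH$. Then $\gamma(z)$ is the fixed point of the conjugated embedding $f' := \gamma f(\cdot) \gamma^{-1}$, and $f'(\cO_K) = \gamma f(\cO_K) \gamma^{-1} \subset \gamma \cQ \gamma^{-1} = \cQ$; hence $f'$ is again an admissible embedding. Applying the CM-point property of the genuine canonical model $(\X(\cQ,\frJ), \varphi)$ to the pair $(f', \gamma(z))$ gives $K \cdot F^{(\frJ\infty)}((\varphi \circ \gamma)(z)) = K^{(\frJ'\infty)}$, which is exactly condition (2) for $(\X(\cQ,\frJ),\, \varphi \circ \gamma)$. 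Since $f \mapsto \gamma f \gamma^{-1}$ is a bijection on the set of admissible embeddings, this really covers every CM point.

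Now the uniqueness clause of Theorem~\ref{thm:canonical_models} produces a unique $j_\gamma \in \Aut(\X(\cQ,\frJ)/F^{(\frJ\infty)})$ with $\varphi \circ \gamma = (j_\gamma)_{\CC} \circ \varphi$, and uniqueness forces $\gamma \mapsto j_\gamma$ to be a group homomorphism that factors through $\Gamma(\cQ, 1)/\Gamma(\cQ, \frJ)$ and recovers the given complex-analytic action via $\varphi$. The only subtlety is confirming that $\varphi \circ \gamma$ satisfies condition (2) for \emph{every} admissible $(K, f)$ rather than only a single one; the conjugation bijection above dispatches this point, and the rest is a direct application of the uniqueness already proven by Shimura.
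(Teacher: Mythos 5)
Your proposal is correct and follows essentially the same route as the paper: show that $(\X(\cQ,\frJ), \varphi\circ\gamma)$ is itself a canonical model by conjugating the CM embeddings $f$ by $\gamma$ (using $\gamma \in \cQ^\times$ to preserve $f(\cO_K)\subset\cQ$), then invoke the uniqueness clause of Theorem~\ref{thm:canonical_models}. Your explicit verification of condition (1) and the remark that conjugation is a bijection on admissible embeddings are slightly more thorough than the paper's write-up, but the argument is the same.
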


\begin{proof} 
For any $\alpha \in \cQ^\times \cap B^+$, we can conjugate $f : K \to B$ to get $f^{\alpha} : K \to B$ with image $\alpha f(K) \alpha^{-1}$ and $f^{\alpha}(\cO_K) = \alpha f(\cO_K) \alpha^{-1} \subset \cQ$ since $\alpha \in \cQ^\times$ and $f(\cO_K) \subset \cQ$. Thus, if $z \in \frH$ is the fixed point of $f(K)$ then $\alpha \cdot z$ is fixed by $f^{\alpha}(K)$, so the hypothesis says that $K \cdot F^{(\frJ \infty)}(\varphi(\alpha \cdot z)) = K^{(\frJ \infty')}$. Let $\varphi' := \varphi \circ \alpha$ viewing $\alpha$ as its action on $\frH$. Then the pair $(C, \varphi')$ is a canonical model because $\varphi(z') = \varphi(\alpha \cdot z)$ also satisfies $K \cdot F^{(\frJ \infty)}(\varphi(z)) = K^{(\frJ' \infty)}$ for any choice of $K/F$. Therefore, there exists $j : C \to C$ defined over $F^{(\frJ \infty)}$ so that $\varphi' = j_{\CC} \circ \varphi$, meaning exactly that the automorphism of $\X(\cQ, \frJ)$ induced by $\alpha$ is defined over $F^{(\frJ \infty)}$.
\end{proof}

\subsection{Congruence subgroups}

At split places of $B$ we can also define other congruence subgroups in analogy with the modular curves $X_0(p)$ and $X_1(p)$. Then we can fix an isomorphism:
\[ \iota_{\p} : B \ot_F F_{\p} \iso M_2(F_{\p}) \]
with the property that it restricts to an isomorphism $\iota_{\p} : \cQ \ot_{\cO_F} \cO_{F_\p} \iso M_2(\cO_{\p})$ (see \cite[Lemma~10.4.3]{quaternion_algebras}). 
Hence, reduction mod $\p^{\ell}$ gives an isomorphism
\[ \iota_{\p} : \cQ / \p \cQ \iso M_2(\kappa_{\p}) \]
where $\kappa_{\p} = \cO_F / \p$ is the residue field. This defines a reduction map
\[ \iota_{\p} : \Gamma(\cQ, 1) \to \GL_2(\kappa_{\p}) \]
whose kernel is exactly $\Gamma(\cQ, \p)$. More generally, if $\frJ \subset \cO_F$ is any ideal whose prime factorization consists entirely of primes splitting $B$, then we can fix a reduction map
\[ \iota_{\frJ} : \Gamma(\cQ, 1) \to \GL_2(\cO_F / \frJ).  \]
Taking the preimage of the upper Borel (resp.\ its unipotent radical, the upper triangular matrices) gives congruence subgroups $\Gamma_0(\cQ, \frJ)$ (resp.\ $\Gamma_1(\cQ, \frJ)$). Beware: this is not consistent with Shimura's use of the notation $\Gamma_1$. The groups $\Gamma_0(\cQ, \frJ)$ are the totally positive units in an \textit{Eichler order} $\cQ_0(\frJ) \subset \cQ$ of level $\frJ$ cf.\ \cite[\S3]{Voight:explicit_methods}.
\par 
However, there are two caveats with these definitions: first $\iota_{\frJ} : \Gamma(\cQ, 1) \to \GL_2(\cO_F / \frJ)$ may not be surjective because of the restriction that elements live in $B^+$. Second, $\iota_{\infty}$ does not give a \textit{faithful} action of $\Gamma(\cQ, 1)$ on $\frH$ because the action factors through $\GL_2^+(\RR) \to \PGL_2^+(\RR)$. In order to understand the relationship between quotients by the various congruence subgroups, we need to precisely understand these failings. 
\par 
For any $\Gamma \subset \Gamma(\cQ, 1)$ denote by $\mathrm{P} \Gamma$ the quotient 
\[ \mathrm{P} \Gamma := \frac{\Gamma}{\cO_F^\times \cap \Gamma} = \frac{\cO_F^{\times} \cdot \Gamma}{\cO_F^{\times}} \]
where $\cO_F^\times \embed \Gamma(\cQ, 1)$ are the scalar matrices. Since $\cQ^{\Nm = 1} \subset \Gamma(\cQ, 1)$ generates $\SL_2(\cO_{\p}) \subset \GL_2(\cO_{\p})$ it is clear that $\im{\iota_{\frJ}}$ contains $\SL_2(\cO_F / \frJ)$. Let 
\[ \Gamma^1(\cQ, \frJ) := \Gamma(\cQ, \frJ)^{\Nm = 1} = \{ \gamma \in \cQ^{\Nm = 1} \mid \gamma - 1 \in \frJ \cQ \} \] 
be the subgroup consisting of elements of $\Gamma(\cQ, \frJ)$ with reduced norm $1$. This is the kernel of $\iota_{\frJ}$ restricted to $\Gamma^1(\cQ, 1) := \cQ^{\Nm = 1}$.
We get a diagram of exact sequences:
\begin{center}
    \begin{tikzcd}
        & 0 \arrow[d] & 0 \arrow[d] & 0 \arrow[d] & 0 \arrow[d]
        \\
        0 \arrow[r] & L \arrow[d, hook] \arrow[r] & \Gamma^1(\cQ, \frJ) \arrow[d, hook] \arrow[r] & \mathrm{P} \Gamma(\cQ, \frJ) \arrow[r, "\Nm"] \arrow[d, hook] & U_{\p}^+ / U_\p^2\arrow[d, hook] &
        \\
        0 \arrow[r] & \{ \pm 1 \} \arrow[r] \arrow[d] & \Gamma^1(\cQ, 1) \arrow[r] \arrow[d, "\iota_{\frJ}"] & \mathrm{P} \Gamma(\cQ, 1) \arrow[r, "\Nm"] \arrow[d] & (\cO_F^\times)^+ / (\cO_F^{\times})^2 \arrow[d]
        \\
        0 \arrow[r] & \mu_2 \arrow[r] & \SL_2(\cO_F / \frJ) \arrow[r] & \PGL_2(\cO_F / \frJ) \arrow[r, "\det"] & (\cO_F / \frJ)^\times / (\cO_F / \frJ)^{\times 2} \arrow[r] & 0  
    \end{tikzcd}
\end{center}
where $U_{\p} \subset \cO^\times_F$ is the group of $u \in \cO_F^{\times}$ such that $u \equiv 1 \mod \p$ and $L$ is $\{ \pm 1 \}$ if $2 \in \p$ and trivial otherwise. Therefore, when $(\cO_F^\times)^+ = (\cO_F^\times)^2$, for example, when the narrow class number of $F$ equals its class number, then the diagram shows that 
\[ \Gamma^1(\cQ, \frJ) / L = \mathrm{P} \Gamma(\cQ, \frJ) \text{ and } \Gamma^1(\cQ, 1) / \{ \pm 1 \} = \mathrm{P} \Gamma(\cQ, 1) \text{ and } \im{\iota_{\frJ}} = \PSL_2(\cO_F / \frJ). \]
Therefore, in this case, $\X(\cQ, \frJ) \to \X(\cQ, 1)$ is a $\PSL_2(\cO_F / \frJ)$-cover. It has intermediate covers 
\[ \X(\cQ, \frJ) \to \X_1(\cQ, \frJ) \to \X_0(\cQ, \frJ) \to \X(\cQ, 1) \]
which are quotients of $\X(\cQ, \frJ)$ by the Borel and its unipotent radical respectively. 

Let $H \subset \GL_2(\cO_{\p})$ be an open subgroup for a prime $\p$ at which $B$ is split. The \textit{level} of $H$ is the smallest $\ell$ such that $H$ is the preimage of a subgroup $\ol{H} \subset \GL_2(\cO_F / \p^{\ell})$. Then we can define the Shimura curve $\X_H(\cQ)$ with $H$-level structure as the quotient of $\X(\cQ, \p^{\ell})$ by the subgroup $\Gamma_H(\cQ) := \Gamma(\cQ, 1) \cap H = \iota_{\p^{\ell}}^{-1}(\ol{H})$. By Lemma~\ref{lem:automorphisms_defined_over_reflex}, $\X_H(\cQ)$ also has a canonical model over $F^{(\frJ \infty)}$. 

\subsection{Certain Hurwitz curves as Shimura curves}

\newcommand{\Hur}{\mathrm{Hur}}
\newcommand{\rmP}{\mathrm{P}}

Much of the favorable arithmetic of the genus 14 Hurwitz curves is due to their realization as Shimura curves for a particular quaternion algebra $B$ over $K_+ = \Q(\eta)$ where $\eta = \zeta_7 + \zeta_7^{-1} = 2 \cos{\tfrac{2\pi}{7}}$. The exposition here closely follows \S3.19 of \cite{shimura:curves} and Elkies' article \cite[\S4.4]{klein_quartic}. Choose as $B$ the following quaternion algebra:
\[ B = \left( \frac{\eta, \eta}{F} \right) := \frac{K_+ \left< i, j \right>}{(i^2 - \eta, j^2 - \eta, ij + ji)}. \]
This is the unique quaternion algebra over $K_+$ that is non-split exactly over the two real places of $K_+$ where $\eta$ embeds negatively and split over all non-archimedean places. Let $v_{\infty} : K_+ \embed \RR$ denote the obvious real embedding sending $\eta \mapsto 2 \cos{\tfrac{2\pi}{7}}$ which splits $B$. We can thus fix an embedding $\iota_{\infty} : B \embed M_2(\RR)$ compatible with $v_{\infty}$ arising from a choice of isomorphism $B \ot_{K_+, v_{\infty}} \RR \iso M_2(\RR)$. We apply Shimura's construction from the previous section to this quaternion algebra. Define the following maximal order:
\[ \cQ_{\mathrm{Hur}} = \cO_{K_+}[i,j,j'] \subset B \]
where $j' := \tfrac{1}{2}(1 + \eta i + \tau j)$ and $\tau := 1 + \eta + \eta^2$. Since $K_+$ has narrow class number $1$, the previous section shows $\Gamma(\cQ,1) \onto \PSL_2(\cO_{K_+} / \frJ) \subset \PGL_2(\cO_{K_+} / \frJ)$.
\par 
Miraculously, 
\[ \mathrm{P} \Gamma(\cQ_{\Hur}, 1) = (\cQ_{\Hur})^{\Nm = 1} / \{ \pm 1 \} \cong \TT(2,3,7) = \left< g_1, g_2, g_3 \mid g_1^2, g_2^3, g_3^7, g_1 g_2 g_3 \right> \]
recovers the Hurwitz triangle group. Therefore, any finite-index normal subgroup $\Gamma \subset \Gamma(\cQ_{\Hur}, 1)$ without elliptic elements produces a cover $\X_{\Gamma} \to \X(\cQ_{\Hur},1) \cong \P^1$ ramified at exactly three points with orders $2,3,7$ and hence a Hurwitz curve with automorphism group $\rmP \Gamma(\cQ_{\Hur},1) / \rmP \Gamma$. Conversely, any Hurwitz curve is uniformized by $\frH$ with deck transformation group $\Gamma \subset \SL_2(\RR)$ (i.e., $\Gamma$ is a Fuchsian subgroup so that $\frH / \Gamma$ has $84(g-1)$ automorphisms) is conjugate in $\SL_2(\RR)$ to a normal subgroup of $\Gamma(\cQ_{\Hur}, 1)$ modulo the center.
\par 
Because $B$ is non-split only at $\infty$ we know that $\cQ_{\Hur} / \p \cong \SL_2(\kappa_{\p})$ for all primes $\p \subset \cO_{K_+}$. Therefore, the Shimura curves $\X(\cO, \p)$ are Hurwitz curves with automorphism groups $\PSL_2(\kappa_{\p})$. By Theorem~\ref{thm:canonical_models}, this curve is defined over the ray class field $K_+^{(\p \infty)}$. For any prime $\p_{13}$ lying over $13$ this field $(K_+)^{(\p_{13} \infty)}$ is isomorphic to the field $K$ described in the main text (\cite[\href{https://www.lmfdb.org/NumberField/6.2.31213.1}{Number field 6.2.31213.1}]{lmfdb}). In fact, when $p$ is inert ($p \equiv \pm 2, \pm 3 \mod 7$) or ramified ($p = 7$) in $K_+$, the curves $\X^{\Hur}(\p)$ for the unique prime $\p$ over $p$ are defined over $\Q$ and when $p$ splits ($p \equiv \pm 1 \mod 7$) in $K_+$ the three curves $\X^{\Hur}(\p)$ for $\p \mid p$ are defined over $K_+$ and arise from the three embeddings $K_+ \embed \CC$ by \cite[Theorem~B]{voight:triangle_groups}
(cf. \cite[Proposition~5.1.2]{voight:thesis}). From now on, we make the abbreviation $\X^{\Hur}(\frJ) := \X(\cQ_{\Hur}, \frJ)$.

\begin{figure}
\begin{center}
    \begin{tabular}{c|c|c|c|c}
        $N(\p)$ & $\p$ & $m$ & $g$ & $G$
         \\
         \hline
         7 & $\p_7$ & 1 & 3 & $\PSL_2(\FF_7)$
         \\
         8 & $(2)$ & 1 & 7 & $\SL_2(\FF_8)$
         \\
         13 & $\p_{13}$ & 3 & 14 & $\PSL_2(\FF_{13})$
         \\
         27 & $(3)$ & 1 & 118 & $\PSL_2(\FF_{3^3})$
         \\
         29 & $\p_{29}$ & 3 & 146 & $\PSL_2(\FF_{29})$
    \end{tabular}
\end{center}
\caption{Prime ideals of $\cO_F$ ordered by norm. Here $m$ is the number of Galois conjugate prime ideals which all give distinct Hurwitz curves (see \cite[3.19.2]{shimura:curves}) each having genus $g$ and automorphism group $G$.}
\end{figure}

\begin{theorem}
All three Hurwitz curves of genus $14$ occur as $\X^{\Hur}(\p_{13})$ for the three primes $\p_{13}$ lying over $p = 13$. In particular, these three curves are Galois conjugate.
\end{theorem}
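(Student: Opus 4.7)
The plan is to apply the Shimura curve construction of Theorem~\ref{thm:canonical_models} with $\frJ = \p_{13}$ for each prime $\p_{13} \subset \cO_{K_+}$ lying over $13$, and then identify the three resulting curves with the known genus-$14$ Hurwitz triplet. The starting input is the splitting behavior of $13$: since $13 \equiv -1 \pmod{7}$, the Frobenius of $13$ in $\Gal(\Q(\zeta_7)/\Q)$ acts as inversion on $\zeta_7$ and therefore fixes $\eta$, so $13$ splits completely in $K_+$ into three primes $\p_{13}^{(1)}, \p_{13}^{(2)}, \p_{13}^{(3)}$, each with residue field $\FF_{13}$. Since $B$ is split at every non-archimedean place and $K_+$ has narrow class number $1$, the diagram of exact sequences preceding Lemma~\ref{lem:automorphisms_defined_over_reflex} shows that the image of the reduction map $\iota_{\p_{13}^{(i)}} \colon \mathrm{P}\Gamma(\cQ_{\Hur}, 1) \to \PGL_2(\FF_{13})$ is exactly $\PSL_2(\FF_{13})$. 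Hence each Shimura cover $\X^{\Hur}(\p_{13}^{(i)}) \to \X^{\Hur}(1) \cong \P^1$ is Galois with group $\PSL_2(\FF_{13})$ of degree $1092$.

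Next I would compute the genus via Riemann--Hurwitz. The cover is ramified over the three elliptic points with indices dividing $2, 3, 7$. I would first verify that the elliptic generators $g_1, g_2, g_3$ of $\mathrm{P}\Gamma(\cQ_{\Hur}, 1) \cong \TT(2,3,7)$ reduce to elements of orders \emph{exactly} $2, 3, 7$: if some $\bar{g}_i$ were trivial, then the triangle relation $g_1 g_2 g_3 = 1$ combined with the orders of the other two generators would force all $\bar{g}_j$ to be trivial, contradicting surjectivity onto $\PSL_2(\FF_{13})$. Riemann--Hurwitz then yields $2g - 2 = 1092 \cdot (1 - \tfrac{1}{2} - \tfrac{1}{3} - \tfrac{1}{7}) = 26$, so $g = 14$. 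Since $|\PSL_2(\FF_{13})| = 84 \cdot (14-1)$ saturates the Hurwitz bound, each $\X^{\Hur}(\p_{13}^{(i)})$ is a Hurwitz curve of genus $14$ with automorphism group exactly $\PSL_2(\FF_{13})$.

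Finally, I would deduce Galois conjugacy and completeness. The three primes are permuted transitively by $\Gal(K_+/\Q) \cong C_3$, and by functoriality of Shimura's construction combined with \cite[Theorem~B]{voight:triangle_groups}, this permutation induces the natural Galois action on the corresponding canonical models (each defined over $K_+$). Since Macbeath's classical enumeration gives exactly three isomorphism classes of genus-$14$ Hurwitz curves, it suffices to prove the three Shimura curves form a single Galois orbit of size $3$ rather than a collapsed orbit of size $1$: if all three were isomorphic, the common isomorphism class would descend to $\Q$, contradicting that Voight's theorem identifies the field of moduli as $K_+$ (which admits no proper subfield). Hence the three Shimura curves exhaust the Hurwitz triplet and are pairwise Galois conjugate. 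The main obstacle is this final non-collapse step; invoking Voight's theorem settles it directly, but a self-contained argument would require computing an arithmetic invariant, such as the field of moduli or a trace of Hecke correspondence, that distinguishes the three Galois conjugates.
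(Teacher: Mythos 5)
Your proposal is correct and arrives at the theorem, but it substitutes a different justification for the one genuinely nontrivial step. The paper's proof is a short counting argument: (a) there are exactly three isomorphism classes of genus-$14$ Hurwitz curves (the Macbeath/LMFDB tabulation you also invoke), and (b) no two of the curves $\X^{\Hur}(\p_{13})$ are isomorphic over $\CC$, which the paper gets directly from Shimura's result [3.19.2]; pairwise non-isomorphism plus the count of three forces all three classes to appear. You instead establish (b) by a Galois-descent argument: the $C_3 = \Gal(K_+/\Q)$ action permutes the three primes, hence (by equivariance of the canonical-model construction) the three curves, and a collapsed orbit would force the field of moduli down to $\Q$, contradicting Voight's Theorem~B, which pins it at $K_+$. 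This is exactly the alternative route the paper itself records in the remark immediately following the theorem, so it is a legitimate replacement; its cost is that you must justify the Galois equivariance of Shimura's construction (which you correctly flag as the delicate point), whereas Shimura's [3.19.2] delivers the non-isomorphism with no descent argument at all. Your additional verifications --- that $13$ splits completely in $K_+$, that the reduction map surjects onto $\PSL_2(\FF_{13})$ when the narrow class number is $1$, that the elliptic generators reduce to elements of exact orders $2,3,7$, and the Riemann--Hurwitz computation $2g-2 = 1092/42 = 26$ --- are all correct; the paper absorbs these into its setup (the table of prime levels and the discussion of congruence subgroups) rather than into the proof itself.
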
 

\begin{proof}
As tabulated in \cite[\href{https://www.lmfdb.org/HigherGenus/C/Aut/14.1092-25.0.2-3-7}{Higher Genus Family: 14.1092-25.0.2-3-7}]{lmfdb}, there are exactly three isomorphism classes of curves over $\CC$ of genus $14$ with automorphism group $\PSL_2(\FF_{13})$ and no other Hurwitz groups with order $1092$ exist. Furthermore, by \cite[3.19.2]{shimura:curves}, no two $\X^{\Hur}(\p_{13})$ are isomorphic over $\CC$ (base changed along the fixed embedding $K_+ \embed \RR$) for different choices of the prime $\p_{13}$ over $p$. Therefore, all three isomorphism classes of Hurwitz curves over $\CC$ appear. 
\end{proof}

\begin{rmk}
Transitivity of the Galois action also follows from \cite[Theorem B (b)]{voight:triangle_groups} where it is shown that the $K_+$ is the field of moduli for $\PSL_2(\FF_{13})$-Galois $(2,3,7)$ \Belyi maps forgetting the isomorphism $\PSL_2(\FF_{13}) \iso \Aut(C)$. Note: given a Hurwitz curve, its $(2,3,7)$ \Belyi map is unique and canonically determined as the group scheme quotient $C \to C / \Aut(C)$.
\end{rmk}

\begin{theorem} \label{app:thm:isogeny_decomp_defined_over_K}
Let $\X^{\Hur}(\p_{13}) / K_+$ be a genus $14$ Hurwitz curve. Then there is an elliptic curve $E$ defined over $K = (K_+)^{(\p_{13}\infty)}$ such that $\Jac(\X(\p_{13}))$ is isogenous to $E^{14}$ over $K$. 
\end{theorem}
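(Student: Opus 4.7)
The plan is to identify $E$ as the natural quotient $C/H$ for $H \cong S_3$ a subgroup of $\Aut(C)$ and to verify the isogeny $\Jac(C) \sim E^{14}$ by combining a Chevalley--Weil character calculation with the decomposition of Jacobians via group actions.

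First I identify the elliptic curve. By Figure~\ref{fig:subgroups_genera}, each of the two conjugacy classes of subgroups $S_3 \subset G := \PSL_2(\FF_{13})$ determines an intermediate quotient $C/S_3$ of genus $1$. Fix such a copy $H \cong S_3$ and set $E := C/H$. By Lemma~\ref{lem:automorphisms_defined_over_reflex}, every automorphism of $C$ arising from $\Gamma(\cQ_{\mathrm{Hur}}, 1)$ is defined over $K := (K_+)^{(\p_{13}\infty)}$, and since $\Aut(C)$ is the quotient $\mathrm{P}\Gamma(\cQ_{\mathrm{Hur}}, 1)/\mathrm{P}\Gamma(\cQ_{\mathrm{Hur}}, \p_{13})$ of such elements, the subgroup $H \subset \Aut(C)$ is defined over $K$; hence $E$ is an elliptic curve over $K$. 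The outer automorphism of $G$ swaps the two conjugacy classes of $S_3$ and corresponds to the nontrivial element of $\Gal(K/K_+)$, which explains why $E$ does not in general descend to $K_+$.

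Next I would show that the natural map
\[ \phi : E^{91} \longrightarrow \Jac(C), \]
formed by summing the inclusions $C/\sigma H \sigma^{-1} \cong E \hookrightarrow \Jac(C)$ over the $91 = [G : N_G(H)]$ conjugates of $H$, is an isogeny onto $\Jac(C)$. The image $A := \phi(E^{91})$ is a $G$-stable abelian subvariety, and at the level of $\ell$-adic Tate modules, $V_\ell A \subset V_\ell \Jac(C)$ is the $\Q_\ell[G]$-submodule generated by $(V_\ell \Jac(C))^H = V_\ell E$. By Frobenius reciprocity, this $G$-span coincides with all of $V_\ell \Jac(C)$ if and only if every irreducible $\bar{\Q}_\ell[G]$-constituent $V$ of $V_\ell \Jac(C)$ satisfies $V^H \neq 0$. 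Applying the Chevalley--Weil formula to the $(2,3,7)$-ramified cover $C \to C/G \cong \P^1$ with the explicit spherical generators $g_1, g_2, g_3$ of orders $2,3,7$, one computes the character of $H^0(C, \Omega^1_C)$ as a $\CC[G]$-module and identifies it with a single $14$-dimensional irreducible representation $V$ of $G$ (one of the two principal-series irreducibles attached to a nontrivial character of the split torus). Hence $H^1(C, \CC) = V \oplus V^*$, and one verifies $\dim V^H = |H|^{-1} \sum_{h \in H} \chi_V(h) > 0$ (and likewise for $V^*$) directly from the character table of $\PSL_2(\FF_{13})$.

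Finally, since $\phi$ is surjective, $\Jac(C)$ is an isogeny quotient of $E^{91}$; Poincar\'{e} complete reducibility then forces every simple isogeny factor of $\Jac(C)$ to be isogenous to $E$, and the dimension equality $\dim \Jac(C) = 14 = 14 \cdot \dim E$ gives $\Jac(C) \sim E^{14}$ over $K$. The principal obstacle is the Chevalley--Weil step identifying $H^0(C, \Omega^1_C)$ with the specific $14$-dimensional irreducible: it requires computing eigenvalue multiplicities of each monodromy element $g_i$ on each irreducible representation of $\PSL_2(\FF_{13})$, which is elementary but demands care to match the local data with standard Chevalley--Weil conventions.
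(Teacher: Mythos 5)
Your proposal is correct, and it rests on the same two pillars as the paper's argument -- the fact that the full $\PSL_2(\FF_{13})$-action on $C$ is defined over $K$ (via Lemma~\ref{lem:automorphisms_defined_over_reflex}) and the identification of $H^1(C(\CC),\Q)$ as isotypic for the $14$-dimensional irreducible representation $V_{14}$ -- but it extracts the isogeny decomposition by a different mechanism. The paper notes that $\Q[G]$ acts on the isotypic module through a surjection $\Q[G] \onto M_{14}(\Q)$ and lets the rank-one idempotents of that matrix algebra split $\Jac(C)$ into $14$ copies of a one-dimensional factor $E$, which is never identified concretely. You instead take $E := C/H$ for $H \cong S_3$ (genus $1$ by Figure~\ref{fig:subgroups_genera}, which already tells you $\dim V_{14}^H = 1$ without a separate character-table check), sum the $91$ conjugate copies of $E$ into $\Jac(C)$, verify surjectivity on Tate modules by the isotypic-span argument, and finish with Poincar\'{e} reducibility and a dimension count; all of these steps are sound. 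The paper's route is shorter, but yours has the genuine advantage of exhibiting $E$ as an explicit quotient curve -- which is in effect what the paper needs anyway in Appendix~\ref{appendix:computations}, where the $j$-invariant of $E$ is computed from the genus-$2$ intermediate quotient $\X_{\mathrm{sp}}^{\Hur}(\p_{13})$. The only place to be careful is your Chevalley--Weil step: since $\PSL_2(\FF_{13})$ also has two $7$-dimensional irreducibles, ruling out $H^0(C,\Omega_C^1) \cong V_7 \oplus V_7'$ genuinely requires the eigenvalue-multiplicity computation (or the comparison with all the quotient genera in Figure~\ref{fig:subgroups_genera}, which is how the paper phrases it); you flag this correctly as the computational crux.
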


\begin{proof}
Let $C = \X^{\Hur}(\p_{13})$. Computing fixed points and comparing to genera of curves in the quotient lattice, we determine $H^1(C(\CC), \Q)$ as a $G$-representation to be $V_{14} \oplus V_{14}$ where $V_{14}$ is the $14$-dimensional irreducible $\Q$-representation of $G$ corresponding to the character \cite[\href{https://beta.lmfdb.org/Groups/Abstract/Qchar_table/1092.25?char_highlight=1092.25.14b}{1092.25.14b}]{lmfdb}. Moreover, the action of $\Q[G]$ on $V_{14}$ gives a surjection $\Q[G] \onto M_{14}(\Q)$ through which $\Q[G] \acts H^1(C(\CC), \Q)$ factors. Since $G \acts C$ is defined over $K$, the group algebra $\Q[G]$ acts by $K$-isogenies on $\Jac(C)$ so the idempotents in $M_{14}(\Q)$ give an isogeny decomposition
\[ \Jac(C) \sim E^{14} \]
defined over $K$. 
\end{proof}

\begin{cor} \label{app:cor:supersingular_reduction}
$\X^{\Hur}(\p_{13})$ is supersingular at infinitely many primes of $K_+$.  
\end{cor}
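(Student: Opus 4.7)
The plan is to combine the isogeny decomposition $\Jac(C) \sim E^{14}$ over $K$ supplied by Theorem~\ref{app:thm:isogeny_decomp_defined_over_K} with Elkies' theorem on supersingular primes, and then descend from $K$ to $K_+$. Write $C := \X^{\Hur}(\p_{13})$.

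First, I would invoke Elkies' theorem \cite{Elkies}: any elliptic curve defined over a number field admitting at least one real place has supersingular reduction at infinitely many primes. The field $K = K_+(\sqrt{-3\eta-2})$ has real places (for instance, it has signature $(2,2)$), so Elkies' theorem applies to the elliptic curve $E$ provided by Theorem~\ref{app:thm:isogeny_decomp_defined_over_K} and produces infinitely many primes $\frP \subset \cO_K$ at which $E$ has supersingular reduction.

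Second, I would transfer supersingularity across the isogeny and push it down to $K_+$. After discarding the finite set of primes of bad reduction for $C$, for $E$, and for the isogeny $\Jac(C) \sim E^{14}$, the isogeny specializes to an isogeny of abelian varieties over the algebraic closure of the residue field at $\p := \frP \cap \cO_{K_+}$. Since Frobenius eigenvalues on $\ell$-adic cohomology are isogeny invariants, the Frobenius eigenvalues on $H^1_{\et}(C_{\bar\p}, \Q_\ell)$ all equal a root of unity times $q^{1/2}$; that is, $C_{\bar\p}$ is supersingular in the sense of the introduction. Finally, the map $\frP \mapsto \frP \cap \cO_{K_+}$ has fibers of size at most $[K:K_+] = 2$, so the infinitely many supersingular primes produced in $K$ yield infinitely many supersingular primes of $C$ in $K_+$.

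There is no serious obstacle at this stage: once Theorem~\ref{app:thm:isogeny_decomp_defined_over_K} is in place, everything above is formal. The only minor point to verify is that $K/K_+$ does admit a real place (rather than being totally complex), so that Elkies' theorem can in fact be applied; this is already noted in the appendix. In that sense, the genuine work was arranging, via the Shimura-curve description and the rationality of the character $V_{14}$, for the isogeny factor $E$ to be defined over a field with a real place in the first place, so that the final supersingularity statement reduces to citation.
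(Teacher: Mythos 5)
Your argument is correct and matches the paper's proof essentially verbatim: apply Elkies' theorem to $E$ over $K$ (which does have a real place, as $-3\eta-2$ is positive under one real embedding of $K_+$), transfer supersingularity through the isogeny $\Jac(C)\sim E^{14}$ at primes of good reduction, and descend to $K_+$ since supersingularity is a geometric property. No further comment is needed.
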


\begin{proof}
Since $K$ has a real place, $E$ has infinitely many primes of supersingular reduction by \cite{Elkies}. Therefore, for infinitely many primes $\p \subset \cO_{K_+}$ there is a prime $\mathfrak{P} \subset \cO_{K}$ over $\p$ such that $E_{\mathfrak{P}}$ is supersingular. Therefore, whenever $C$ and $E$ have good reduction, we conclude that $C_{\mathfrak{P}}$ is supersingular since supersingularity is a  property of the Tate module of $\Jac(C)_{\mathfrak{P}}$ which is isomorphic to the Tate module of $E_{\mathfrak{P}}^{14}$. However, supersingularity is a geometric property, so $C_{\p}$ is supersingular as well. 
\end{proof}

\section{Explicit computations} \label{appendix:computations}

At this point, we have deduced that the elliptic curve $E$ appearing in the isogeny decomposition of $\Jac(\X^{\Hur}(\p_{13}))$ has infinitely many primes of supersingular reduction. This suffices to prove the main results of the paper. In this section, we will actually compute explicit primes ($1091, 2339, 6551, 7643, \dots$) of non-unirational supersingular reduction for the surfaces in Theorem~\ref{intro:thm:example}. To do this, we will compute the $j$-invariant of the elliptic curve $E$ appearing in the isogeny decomposition of $\Jac(\X^{\Hur}(\p_{13}))$. To do this, we rely on the Eichler--Shimizu--Jacquet--Langlands correspondence to relate \textit{quaternionic cusp forms} for Shimura curves to Hilbert cusp forms and the Eichler--Shimura relation to compute traces of Frobenius from the Hecke eigenvalues. We will then be able to identify the elliptic curve by a finite number of Frobenius traces since it is defined over $K = (K_+)^{(\p_{13} \infty)}$ with bounded conductor. Unfortunately, algorithms to compute the spaces of Hilbert modular forms as Hecke modules are implemented and their results tabulated in the LMFDB only for level $\Gamma_0$ congruence subgroups. In our case, $\X_0^{\Hur}(\p_{13})$ is rational (see Figure~\ref{fig:subgroups_genera}) so there are no corresponding cusp forms. Luckily, conjugation isomorphisms realize the intermediate curve $\X_1^{\Hur}(\p_{13})$ of genus $2$ as another Shimura curve $\X_0^{\Hur}(\p_{13}^2)$ for $\Gamma_0$ at the higher level $\p_{13}^2$. This allows us to read off the Galois representation of a $2$-dimensional isogeny factor $\Jac(\X_1^{\Hur}(\p_{13})) \sim \Res^{K}_{K_+}{E}$ inside $\Jac(\X^{\Hur}(\p_{13}))$.
\par 
This determines a list of supersingular primes for $E$. Next, we show that the product-quotient surface $X$ has good reduction at all primes away from $\# G$ and $N(\p_{13}) = 13$. To do this, we use the canonical integral models of Shimura curves which have good reduction away from the level and some results on simultaneous resolution for cyclic quotient singularities in families. In total, we verify the claim in the introduction that our examples violate the Shioda conjecture in characteristics $1091, 2339, 6551, 7643, \dots$.

\subsection{Quaternionic modular forms}

In this section, we discuss modular forms invariant under action of congruence subgroups for a quaternion order $\cQ$. The relevance for us is that the Eichler--Shimura relation computes Frobenius traces on \etale cohomology in terms of Hecke eigenvalues. 

\begin{defn}
For a matrix 
\[ \gamma = 
\begin{pmatrix}
a & b 
\\
c & d
\end{pmatrix} \in \GL_2(\RR) \]
and $\tau \in \frH$ define the \textit{factor of automorphy}
\[ j(\gamma, \tau) := c \tau + d \in \CC \]
For a \textit{weight} vector $k = (k_1, \dots, k_n) \in (\Z^+)^r$, we define a right \textit{weight-k action} of $\Gamma(\cQ, 1)$ on functions $f : \frH^r \to \CC$ via
\[ (f|_k \gamma)(\tau) := f(\gamma \cdot \tau) \prod_{i = 1}^r (\det{\nu_i(\gamma)})^{k_i/2} j(\nu_i(\gamma), \tau_i)^{-k_i} \]
For $\Gamma \subset \Gamma(\cQ, 1)$ the space $M_k^B(\Gamma)$ of \textit{weight-k modular forms at level} $\Gamma$ is the $\CC$-vector space of holomorphic functions $f : \frH^r \to \CC$ invariant under the weight-$k$ action of $\Gamma$ and bounded at the cusps. The space $S_k^B(\Gamma)$ of \textit{weight-k cusp forms at level} $\Gamma$ is the subspace of functions vanishing at the cusps of $\frH^r / \Gamma$. 
\end{defn}

In the case $B = M_2(F)$, $\cQ = M_2(\cO_F)$, and  $r = g$, we recover the spaces of Hilbert modular forms for the totally real field $F$. In that example, we drop the quaternion algebra ``$B$'' from the notation. We focus on the case $\Gamma = \Gamma_0(\cQ, \frJ)$ because these spaces have been computed in \cite{lmfdb} for small level. Write $S_k^B(\frJ) := S_k^B(\Gamma_0(\cQ, \frJ))$ and ditto for $M_k^B(\frJ)$. Parallel to the usual story, there is a Hecke algebra action on $S_k(\Gamma)$ now with Hecke operators $T_{\p}$ indexed by prime ideals $\p \subset \cO_F$. In the case when $B$ is non-split over $F$ it turns out there are no cusps (the corresponding subgroup is cocompact) so the condition at cusps is vacuous. In this case we say $S_k^B(\frJ) = M_k^B(\frJ)$ is the space of \textit{quaternionic cusp forms}.
\par 
The case of parallel weight $2$, meaning $k = (2,\dots,2)$, is particularly important because such cusp forms define holomorphic symmetric $r$-forms on the Shimura variety $\frH^r / \Gamma$ (really the associated analytic quotient stack). In the case $r = 1$, this induces the \textit{Eichler--Shimura isomorphism}
\[ S_2^B(\frJ) \oplus \ol{S_2^B(\frJ)} \iso H^1(\X_0(\frJ), \CC). \]
Let $\TT_0(\frJ)$ be the quotient of the Hecke algebra acting faithfully on $S_2^B(\frJ)$. Then $H^1(\X_0(\frJ), \CC)$ becomes a free $\TT_0(\frJ)_{\CC}$-module of rank $2$. A Galois orbit of \textit{eigenforms} $f \in S_2^B(\frJ)$ for the Hecke operators is equivalent to an evaluation map $\ev_f : S_2^B(\frJ) \onto K_f$ sending $T_{\p} \mapsto a_{\p}(f)$ where $a_{\p}(f)$ is the eigenvalue of $T_{\p} f = a_{\p}(f) f$ and $K_f$ is the trace field generated by the $a_{\p}(f)$. The kernel defines a prime $\p_f = \ker{\ev_f}$ of the Hecke algebra. Since $\TT_0(\frJ)$ acts by correspondences, it acts on $\Jac(\X_0(\frJ))$ by isogenies so there is a quotient $A_f$ by $\p_f$ well-defined up to isogeny. Here $A_f$ is an abelian variety of dimension $[K_f : \Q]$ defined over $F$ \cite{zhang}. By above, its Tate module $V_{\ell}(A_f)$ is a $\Q_{\ell} \ot_{\Q} K_f$-module (whose action commutes with the Galois action) of rank $2$. The \textit{Eichler--Shimura relation} then says that for the Galois representation
\[ \rho_{A_f, \ell} : \Gal(\ol{F}/F) \to \GL_2(\Q_{\ell} \ot_{\Q} K_f) \]
and any prime $\p \subset \cO_F$ with $\p \nmid \ell \frJ$, the characteristic polynomial of $\rho_{A_f, \ell}(\Frob_{\p})$ is
\[ T^2 - a_{\p}(f) \, T + N(\p) \]
viewed as a polynomial over $\Q_{\ell} \ot_{\Q} K_f$ \cite[\S10.3]{carayol}.
\par 
The Jacquet--Langlands correspondence now relates the Hecke module structure for parallel weight modular forms in the quaternionic and Hilbert cases.

\begin{theorem}[Eichler--Shimizu--Jacquet--Langlands correspondence] \label{thm:jacquet_langlands}
Let $\frD \subset \cO_F$ be the discriminant of $B$ and $\frJ \subset \cO_F$ an ideal coprime to $\frD$. Then there is a Hecke module embedding
\[ S_2^B(\frJ) \embed S_2(\frD \frJ) \]
whose image consists of Hilbert cusp forms which are new at all $\p \mid \frD$. 
\end{theorem}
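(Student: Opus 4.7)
The plan is to derive this from the automorphic-representation form of the Jacquet--Langlands correspondence. First, I would translate both sides into adelic language: via strong approximation for $B^{\Nm = 1}$, the space $S_2^B(\frJ)$ is identified with the space of cuspidal automorphic forms on $G_B(\A) = (B \otimes_F \A_F)^\times$ whose component at the split archimedean place $\nu_1$ is the holomorphic discrete series of weight $2$, whose components at the compact places $\nu_2, \dots, \nu_g$ are the trivial representation of $B_{\nu_i}^\times$, and which are right-invariant under the adelic incarnation of $\Gamma_0(\cQ, \frJ)$. Likewise, $S_2(\frD \frJ)$ corresponds to the analogous space on $\GL_2(\A_F)$ at parallel weight $2$ with adelic $\Gamma_0(\frD \frJ)$-level structure.

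Second, I would invoke the global Jacquet--Langlands correspondence, which provides a Hecke-equivariant injection $\pi \mapsto \pi^{\GL_2}$ from cuspidal automorphic representations of $G_B(\A)$ with our prescribed archimedean behavior into cuspidal automorphic representations of $\GL_2(\A_F)$. This map preserves local components at every finite prime where $B$ splits and, at a prime $\p \mid \frD$, sends the only representation that can appear for us -- the trivial representation of the compact group $B_\p^\times / F_\p^\times$, since our forms are fixed by the units of a maximal order at $\p$ -- to an unramified twist of the Steinberg representation of $\GL_2(F_\p)$. At each compact infinite place, it sends the trivial representation of $B_{\nu_i}^\times$ to the holomorphic discrete series of $\GL_2(\RR)$ of weight $2$, so the image is a cuspidal representation of parallel weight $2$ whose local component at every $\p \mid \frD$ lies in the discrete series.

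Finally, I would match local invariance conditions at each finite prime. Away from $\frD$, invariance under the local factor of $\Gamma_0(\cQ, \frJ)$ translates directly into the corresponding Iwahori-type invariance on the $\GL_2$ side, accounting for the $\frJ$ part of the conductor. At $\p \mid \frD$, the Steinberg representation has conductor $\p$ with a one-dimensional line of $\Gamma_0(\p)$-fixed vectors, so its new-vector theory forces the resulting Hilbert cusp form to have level exactly $\p$ at such primes and to be new there; assembling these local data produces the desired embedding into the $\frD$-new subspace of $S_2(\frD \frJ)$. The main technical obstacle is not the representation-theoretic step, which is the classical theorem of Jacquet--Langlands, but rather the bookkeeping required to pass between the classical definition of $S_2^B(\frJ)$ as holomorphic functions on the single upper half-plane $\frH$ and the adelic realization that admits the correspondence; this dictionary is standard, and in the main text I would content myself with citing it along with the original Jacquet--Langlands theorem and its extension by Shimizu to totally real base fields.
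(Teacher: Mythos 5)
Your outline is the standard adelic proof via the global Jacquet--Langlands correspondence, and it matches the paper's treatment: the paper gives no argument of its own but simply cites Hida and Voight for precisely this statement, whose proofs rest on the translation to automorphic representations, the JL transfer, and the local new-vector bookkeeping at $\p \mid \frD$ that you sketch. Your closing remark --- that in the main text one would content oneself with citing the classical-to-adelic dictionary together with the Jacquet--Langlands theorem and Shimizu's extension to totally real fields --- is exactly what the paper does.
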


See \cite[Proposition~2.12]{Hida} (cf.\ \cite[Theorem~2.9]{Voight:computing_hecke} and \cite[Theorem~3.9]{Voight:explicit_methods}). In particular, when $\frD = (1)$, Theorem~\ref{thm:jacquet_langlands} yields an isomorphism $S_2^B(\frJ) \cong S_2(\frJ)$ as Hecke modules. 

\subsection{Conjugation isomorphisms between Shimura curves}

Using conjugacy of certain Fuchsian groups, we can relate different Shimura curves at various levels. This will be important for explicit computations using the Jacquet-Langlands correspondence. The exposition closely follows the case of modular curves recorded in the appendix to \cite{images_of_galois} by John Voight.
\par 
Let $H \subset \GL_2(\cO_{\p})$ be an open subgroup for a prime $\p$ at which $B$ is split. Assume $H$ is contained in the preimage of the upper Borel of $\GL_2(\kappa_{\p})$. Also assume that $\p$ has a totally positive generator $\pi \in \p$ (such as occurs when $F$ has narrow class number $1$). Consider the matrix
\[ \varpi := \begin{pmatrix}
0 & -1
\\
\pi & 0
\end{pmatrix} \in \GL_2^+(\cO_F) \]
This acts by conjugation in $\GL_2(F_{\p})$ as follows:
\[ 
\varpi 
\begin{pmatrix}
a & b
\\
c & d
\end{pmatrix}
\varpi^{-1} = 
\begin{pmatrix}
d & - \pi^{-1} c
\\
- \pi b & a
\end{pmatrix}
\]
Hence, if $c \in \p$ this produces a matrix in $\GL_2(\cO_{\p})$. 
Therefore, there is a conjugation operation on the subgroups $H$ as follows: 
\[ H^{\varpi} := \varpi H \varpi^{-1} = \left \{ 
\begin{pmatrix}
d & - c
\\
- \pi b & a
\end{pmatrix} \: \middle| \: 
\begin{pmatrix}
a & b
\\
\pi c & d
\end{pmatrix} 
\in H 
\right \} \]
This conjugation action on subgroups induces nontrivial isomorphisms between Shimura curves at different levels. To prove this, we need an important result of Eichler. 

\begin{lemma}[Eichler Approximation] \cite[Lemma~2.15]{shimura:curves}
Let $\cQ \subset B$ be a maximal order in a quaternion algebra over $F$ and $\mathfrak{I} \subset \cQ$ a two-sided ideal. Let $\beta \in \cO_F$ and $\gamma \in \cQ$ such that $\beta$ is positive at the non-split non-archimedean places of $B$ and $\Nm(\gamma) \equiv \beta \mod (\mathfrak{I} \cap \cO_F)$. Then there exists $\gamma' \in \cQ$ such that $\gamma' \equiv \gamma \mod \mathfrak{I}$ and $\Nm(\gamma') = \beta$. 
\end{lemma}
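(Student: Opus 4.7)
The plan is to deduce Eichler approximation from two classical tools: the Hasse--Schilling--Maass norm theorem, which describes the image of the reduced norm $\Nm : B^\times \to F^\times$, and strong approximation for the norm-one algebraic group $B^1 := \ker(\Nm)$, which is available because $B$ is indefinite (it has at least one split archimedean place, namely $\nu_1$ via which $\X(\cQ_{\Hur}, \frJ)$ is uniformized, so that $B^1(F_{\nu_1}) \cong \SL_2(\RR)$ is non-compact). The strategy is to split the problem into (a) producing \emph{some} element of $\cQ$ with reduced norm equal to $\beta$, and then (b) correcting it by a norm-one element to match $\gamma$ modulo $\mathfrak{I}$.

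First I would use Hasse--Schilling--Maass to produce $\delta \in B^\times$ with $\Nm(\delta) = \beta$; this is exactly where the positivity hypothesis on $\beta$ at the ramified archimedean places of $B$ is used. Upgrading this $\delta$ from $B$ to the order $\cQ$ is a purely local matter: at each finite prime $\p$ one verifies that the local norm map $\Nm : \cQ_\p \to \cO_{F,\p}$ hits every element of the prescribed valuation, using the explicit structure of maximal orders in local quaternion algebras (split case: $\cQ_\p \cong M_2(\cO_{F,\p})$, with norm being the determinant; non-split case: $\cQ_\p$ is the unique maximal order and the norm map is well-understood via the uniformizer). Gluing the local choices back via weak approximation and replacing $\delta$ if necessary gives $\delta \in \cQ$ with $\Nm(\delta) = \beta$.

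Now set $\mu := \gamma \delta^{-1} \in B^\times$. By the congruence hypothesis $\Nm(\gamma) \equiv \beta \pmod{\mathfrak{I} \cap \cO_F}$ we obtain $\Nm(\mu) \equiv 1 \pmod{\mathfrak{I} \cap \cO_F}$. The second step is to find $\eta \in B^1$ that simultaneously (i) lies in $\cQ_\p$ for every finite $\p$, and (ii) satisfies $\eta \equiv \mu \pmod{\mathfrak{I}\cQ_\p}$ at each $\p \mid \mathfrak{I}$. Condition (ii) is equivalent to a finite list of congruences cut out inside the profinite group $\prod_{\p \mid \mathfrak{I}} B^1(F_\p)$, and the compatibility with the norm-one condition is exactly the content of $\Nm(\mu) \equiv 1$. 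Strong approximation for $B^1$ with respect to the archimedean place $\nu_1$ then guarantees an $\eta \in B^1(F) \cap \widehat{\cQ}$ matching these local congruences. Setting $\gamma' := \eta \delta$ yields $\Nm(\gamma') = \Nm(\eta)\Nm(\delta) = \beta$ and $\gamma' \equiv \mu \delta = \gamma \pmod{\mathfrak{I}}$, as required.

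The main obstacle, in my view, is the transition from ``approximate $\mu$'' (which is what strong approximation gives for free at each finite place) to ``approximate $\mu$ by a global norm-one element that lies in the integral order everywhere'': one must check that the hypothesis $\Nm(\mu) \equiv 1 \pmod{\mathfrak{I}\cap \cO_F}$ is sufficient to lift through the sequence $1 \to B^1 \to B^\times \xrightarrow{\Nm} F^\times \to 1$ at the integral level, i.e.\ that there is no obstruction coming from the failure of $\cQ^\times \to \cO_F^\times$ to surject onto units $\equiv 1 \pmod{\mathfrak{I}}$. This is handled by combining strong approximation with Eichler's observation that at each local place $\p$ the induced map $\cQ_\p^1 \to (\cQ_\p/\mathfrak{I}\cQ_\p)^{\Nm = 1}$ is surjective, a finite check using the local structure of $\cQ_\p$.
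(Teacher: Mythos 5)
The paper does not prove this lemma; it is imported verbatim from Shimura (\cite[Lemma~2.15]{shimura:curves}), so there is no in-paper argument to compare against. Your proposal is, in substance, the standard proof of Eichler's theorem: Hasse--Schilling--Maass to make the norm-$\beta$ fiber nonempty over $F$ (this is where positivity at the ramified archimedean places enters --- note the paper's ``non-split non-archimedean'' is surely a typo for ``non-split archimedean''), the observation that this fiber is a torsor under $B^1$, local solvability of the integral congruence problem at each finite place, and strong approximation for $B^1$ relative to the split real place to globalize. The overall structure is correct and the key hypotheses (indefiniteness of $B$ for strong approximation; the local surjectivity $\cQ_\p^1 \to (\cQ_\p/\mathfrak{I}\cQ_\p)^{\Nm=1}$) are correctly identified.

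Two points need repair. First, your step (a) claims that \emph{weak} approximation glues the local integral solutions into a global $\delta \in \cQ$ with $\Nm(\delta) = \beta$. Weak approximation only approximates at finitely many places and gives no control of integrality at the remaining ones, and in any case approximating does not preserve the exact equality $\Nm(\delta)=\beta$; producing a global \emph{integral} element of exact norm $\beta$ already requires strong approximation for $B^1$ (it is essentially the case $\mathfrak{I} = \cQ$ of the lemma). The cleanest fix is to drop the requirement $\delta \in \cQ$ entirely: take any $\delta \in B^\times$ with $\Nm(\delta)=\beta$ from Hasse--Schilling--Maass and let the final strong-approximation step impose both the congruences at $\p \mid \mathfrak{I}$ and the integrality of $\eta\delta$ at the finitely many $\p$ where $\delta \notin \cQ_\p^\times$ (local solutions $\gamma'_\p \in \cQ_\p$ with $\Nm(\gamma'_\p)=\beta$ exist at those places, and $\eta$ close to $\gamma'_\p\delta^{-1}$ forces $\eta\delta \in \cQ_\p$ since $\cQ_\p$ is open). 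Second, and relatedly, the congruence $\eta \equiv \mu \pmod{\mathfrak{I}\cQ_\p}$ is not well posed when $\p \mid \beta$, since $\mu = \gamma\delta^{-1}$ need not lie in $\cQ_\p$ there; it should be replaced by ``$\eta$ is $\p$-adically close enough to $\mu$ that $\eta\delta \equiv \gamma \pmod{\mathfrak{I}_\p}$,'' which is exactly what strong approximation supplies. With these adjustments the argument is complete.
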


\begin{prop} \label{prop:conjugation_isomorphism}
Let $H \subset \GL_2(\cO_{\p})$ be an open subgroup for a prime $\p$ at which $B$ is split contained in the preimage of the upper Borel of $\GL_2(\kappa_{\p})$. Then the Shimura curves $\X_H(\cQ)$ and $\X_{H^{\varpi}}(\cQ)$ are isomorphic as curves over $F^{(\frJ \infty)}$.
\end{prop}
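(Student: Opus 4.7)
The plan is to realize the Fuchsian-group isomorphism analytically via a quaternion $\alpha \in \cQ \cap B^+$ whose image under $\iota_\p$ differs from $\varpi$ by an element in the principal congruence subgroup at $\p^\ell$, and then to descend the resulting biholomorphism to $F^{(\frJ \infty)}$ using the CM-point characterization of canonical models in Theorem~\ref{thm:canonical_models}(2).

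First I would apply Eichler approximation to $\cQ$ with $\mathfrak{I} = \p^{\ell} \cQ$ (where $\ell$ denotes the level of $H$) and $\beta = \pi$. Choosing any $\gamma_0 \in \cQ$ with $\iota_\p(\gamma_0) \equiv \varpi \pmod{\p^\ell}$, one has $\Nm(\gamma_0) \equiv \det(\varpi) = \pi \pmod{\p^\ell}$, while $\pi$ is positive at every archimedean place of $F$ by hypothesis. The approximation lemma then produces $\alpha \in \cQ$ with $\Nm(\alpha) = \pi$ and $\alpha \equiv \gamma_0 \pmod{\p^\ell \cQ}$. In particular $\alpha \in B^+$, and writing $\iota_\p(\alpha) = \varpi u$ the element $u$ lies in $1 + \p^\ell M_2(\cO_\p)$, which normalizes $H$ because $H$ has level $\ell$.

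Next I would verify that conjugation by $\alpha$ carries $\Gamma_H(\cQ)$ onto $\Gamma_{H^\varpi}(\cQ)$: for $\gamma \in \Gamma_H(\cQ)$ one computes at $\p$ that $\iota_\p(\alpha \gamma \alpha^{-1}) = \varpi(u \iota_\p(\gamma) u^{-1}) \varpi^{-1} \in \varpi H \varpi^{-1} = H^{\varpi} \subset \GL_2(\cO_\p)$, while at any prime $\q \ne \p$ the element $\iota_\q(\alpha)$ is a unit in $\cQ \otimes \cO_\q$ (since $\Nm(\alpha) = \pi \in \cO_\q^\times$), so conjugation preserves the local order. Thus $\alpha \gamma \alpha^{-1} \in \cQ$ has totally positive norm and lies in $\Gamma_{H^\varpi}(\cQ)$. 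Applying the same argument with $\bar{\alpha}$ in place of $\alpha$ (using $\alpha \bar\alpha = \pi$, so $\bar\alpha$ acts on $\frH$ via $\iota_\infty(\alpha)^{-1}$ up to a positive scalar) provides the inverse, and the element $\iota_\infty(\alpha) \in \GL_2^+(\RR)$ induces a biholomorphism $\bar\psi : \frH / \Gamma_H(\cQ) \iso \frH / \Gamma_{H^\varpi}(\cQ)$.

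The final step is descent. Given a CM datum $(K, f)$ with $f : K \embed B$, $f(\cO_K) \subset \cQ \cap \iota_\p^{-1}(H)$, and fixed point $z \in \frH$, the image $\bar\psi(\varphi_H(z)) = \varphi_{H^\varpi}(\iota_\infty(\alpha) \cdot z)$ is the CM point for the conjugated embedding $f^{\alpha} := \alpha f(\cdot) \alpha^{-1}$, which lands in $\cQ \cap \iota_\p^{-1}(H^\varpi)$ by the same local-global argument used in the previous paragraph. Since $f^\alpha$ defines the same CM field $K$, it yields the same ray class field $K^{(\frJ' \infty)}$ as in Theorem~\ref{thm:canonical_models}(2), so the pair $\bigl(\X_{H^\varpi}(\cQ),\, \varphi_{H^\varpi} \circ \iota_\infty(\alpha)\bigr)$ is a canonical model for the Fuchsian group $\Gamma_H(\cQ)$. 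The uniqueness clause then produces the required $F^{(\frJ \infty)}$-isomorphism $j : \X_H(\cQ) \iso \X_{H^\varpi}(\cQ)$ realizing $\bar\psi$. I expect the main obstacle to be the last integrality step: verifying that $f^\alpha(\cO_K)$ lands in the Eichler-type order attached to $H^\varpi$ reduces to showing that $\varpi$ locally normalizes the Eichler order at $\p$, which was essentially observed in the discussion preceding the proposition.
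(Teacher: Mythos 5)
Your proposal follows essentially the same route as the paper: use Eichler approximation at level $\p^{\ell}$ to replace $\varpi$ by a genuine element $\alpha \in \cQ \cap B^+$ of reduced norm $\pi$, check the conjugation $\Gamma_H(\cQ) \to \Gamma_{H^{\varpi}}(\cQ)$ locally at each finite place using $\Nm(\alpha) = \pi \in \cO_{\q}^{\times}$ for $\q \neq \p$, and descend via the uniqueness of canonical models. The only quibbles are cosmetic: since $\varpi^{-1}$ has a $\pi^{-1}$ entry, your $u = \varpi^{-1}\iota_{\p}(\alpha)$ a priori lies only in $1 + \p^{\ell-1}M_2(\cO_{\p})$ rather than $1 + \p^{\ell}M_2(\cO_{\p})$ (fixable by approximating modulo $\p^{\ell+1}$), and your explicit CM-point descent step is actually more careful than the paper, which leaves that verification implicit.
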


\begin{proof}
We need to show that there is an automorphism of $\frH$ taking the action of $\Gamma_H(\cQ)$ to $\Gamma_{H^{\varpi}}(\cQ)$. A natural candidate is $\nu_1(\varpi)$ but it is not clear that this element actually does the job since conjugation by it may not preserve $B$. Instead, we will $\pi$-adically approximate $\varpi$ by an element of $\cQ$. Using the isomorphism $\iota_{\frJ} : \cQ / \p^{\ell} \cQ \iso M_2(\cO_F / \p^{\ell})$ we can find an element $\varpi_{\ell}' \in \cQ$ so that $\varpi_{\ell}' \equiv \varpi \mod \p^{\ell}$. Now we use Eichler's approximation theorem to find $\varpi_{\ell} \in \cQ$ such that $\Nm(\varpi_{\ell}) = \pi$ and $\varpi_{\ell} \equiv \varpi \mod \p^{\ell}$. It is clear that $(-)^{\varpi_{\ell}}$ preserves $B^+$. Since $\pi$ is totally positive, $\iota_{\infty}(\varpi_{\ell})$ acts on $\frH$. Now, if $H$ has level $\p^{\ell}$ then notice
\[ H^{\varpi} = H^{\varpi_{\ell}} \]
because $\varpi_{\ell} \gamma \varpi^{-1}_{\ell} \equiv \varpi \gamma \varpi^{-1} \mod \p^{\ell}$ and $H$ contains every element which is $1 \mod \p^{\ell}$. Now, $B^+$ is stable under $(-)^{\varpi_{\ell}}$ but $\cQ^{\times}$ is not, so we need to be careful in checking that $(-)^{\varpi_{\ell}}$ takes $\Gamma_H(\cQ)$ to $\Gamma_{H^{\varpi}}(\cQ)$. We do this at each local place using that 
\[ \cQ = \bigcap_{\p' \subset \cO_F} (\cQ_{\p'} \cap B) \]
with the intersection taking place in $B$. Since $\Nm(\varpi_{\ell}) = \pi$ is a unit for $\p' \neq \p$, it preserves $\cQ_{\p'}$. At $\p$, an element of $\Gamma_H(\cQ)$ lands inside the maximal order $M_2(\cO_{\p})$. Therefore $(-)^{\varpi_{\ell}}$ sends $\Gamma_H(\cQ)$ inside $\cQ$. Its image must land inside $\cQ^{\times}$ since it is multiplicative and $\Gamma_H(\cQ)$ is a subgroup.  
\end{proof}

For modular curves, Deligne--Rapoport prove that this isomorphism is defined over $\Q$ \cite[Propositions IV-3.16 and IV-3.19]{deligne_rapoport}. It is likely that the above isomorphism is defined over the reflex field $F^{(\p \infty)}$ but we will not need this. 

\begin{example} \label{example:conjugation_p_squared_level}
For example, let $\p \subset \cO_F$ be a prime ideal generated by a totally positive element $\pi$ at which $B$ is split. The full congruence subgroup $\Gamma(\cQ, \p)$ is associated to the subgroup $H := \ker{(\GL_2(\cO_{\p}) \to \GL_2(\cO_F / \p)}$ which satisfies
\[ H^{\varpi} = \left \{ 
\begin{pmatrix}
d & - c
\\
- \pi b & a
\end{pmatrix} \: \middle| \: 
\begin{pmatrix}
a & b
\\
\pi c & d
\end{pmatrix} 
\in H 
\right \} = \left \{ \begin{pmatrix}
1 + \pi d & - c
\\
- \pi^2 b & 1 + \pi a
\end{pmatrix} \: \middle| \: 
a, b, c, d \in \cO_{\p}
\right\}  \]
which is the preimage of the intersection of the Borel in $\GL_2(\cO_F / \p^2)$ and the unipotent radical in $\GL_2(\cO_F / \p)$. Call this subgroup $H_{\mathrm{ub}}$ and $\X_{\mathrm{ub}}(\cQ, \p^2)$ its associated Shimura curve. Likewise, we can ask what larger subgroup $H'$ (hence also of level $\p$) corresponds to the full Borel $B(\p^2) \subset \GL_2(\cO_F / \p^2)$. This turns out to be the subgroup
\[ H_{\mathrm{sp}} = \left \{
\begin{pmatrix}
a & \pi b
\\
\pi c & d
\end{pmatrix} 
\: \middle| \: 
a, b, c, d \in \cO_{\p}
\right\}  \]
which is the preimage of the split Cartan subgroup of diagonal matrices in $\GL_2(\cO_F / \p)$. Indeed,
\[ H_{\mathrm{sp}}^{\varpi} = \left \{ \begin{pmatrix}
d & - c
\\
- \pi^2 b & a
\end{pmatrix} \: \middle| \: 
a, b, c, d \in \cO_{\p}
\right\}  \] 
is exactly the Borel in $\GL_2(\cO_F/\p^2)$. Therefore, Proposition~\ref{prop:conjugation_isomorphism} produces a diagram
\begin{center}
    \begin{tikzcd}
        \X_{\mathrm{ub}}(\cQ, \p^2) \arrow[d] \arrow[r, "\sim"] & \X(\cQ, \p) \arrow[d]
        \\
        \X_0(\cQ, \p^2) \arrow[r, "\sim"] & \X_{\mathrm{sp}}(\cQ, \p) 
    \end{tikzcd}
\end{center}
\end{example}

\subsection{$\Jac(\X^{\Hur}(\p_{13})$}

Here we compute the $j$-invariant of the elliptic curve $E$ appearing in the isogeny decomposition of $\Jac(\X^{\Hur}(\p_{13}))$. From Figure~\ref{fig:subgroups_genera} we know that $\X_{\mathrm{sp}}^{\Hur}(\p_{13})$, the quotient of $\X^{\Hur}(\p_{13})$ by the maximal torus $C_6$, is a genus $2$ curve. Since $\Jac(\X^{\Hur}(\p_{13})) \sim E^{14}$ over $K$ it suffices to compute $\Jac(\X_1^{\Hur}(\p_{13}))$. The conjugation isomorphism of Example~\ref{example:conjugation_p_squared_level} exhibits $\X_{\mathrm{sp}}^{\Hur}(\p_{13}) \iso \X_0^{\Hur}(\p_{13}^2)$. The Eichler--Shimizu--Jacquet--Langlands correspondence,
\[ S^B_2(\p_{13}^2) \cong S_2(\p_{13}^2) \]
expresses the Hecke action on $H^1(\X_0^{\Hur}(\p_{13}^2)_{\CC}, \QQ)$ in terms of Hilbert cusp forms of parallel weight $2$ for level $\Gamma_0(\p_{13}^2)$. Fixing the prime $\p_{13}$ over $13$, the space $S_2(\p_{13}^2)$ consists of a unique Galois orbit $f$ of Hilbert eigenforms \cite[\href{https://www.lmfdb.org/ModularForm/GL2/TotallyReal/3.3.49.1/holomorphic/3.3.49.1-169.4-a}{3.3.49.1-169.4-a}]{lmfdb}. The Hecke trace field is $K_f = \Q(\sqrt{3})$, hence $f$ corresponds to the abelian surface $A_f \cong \Jac(\X_0^{\Hur}(\p_{13}^2))$. We expect an elliptic curve $E$ in the isogeny decomposition over $K$ but the Hecke eigenvalues are not rational so the eigenform cannot correspond to an elliptic curve over $K_+$. Indeed, $K$ is the field of definition of $E$ and there is an isogeny $A_f \iso \Res^{K}_{K_+} E$. To find $E$ we analyze the Galois representation $\rho_{A_f, \ell}$ attached to the Tate module $V_{\ell}(A_f)$. Eichler-Shimura gives
\[ \tr{\rho_{A_f, \ell}(\Frob_{\p})} = a_{\p}(f) \] 
for $\p \subset \cO_{K_+}$ not dividing $\ell \p_{13}$. 
If $V_{\ell}(A_f)$ splits when restricted to $\Gal(\ol{K} / K)$, we can compute its traces of Frobenius directly. Consider $\frP \subset \cO_{K}$ lying over $\p \subset \cO_{K_+}$. If $\p$ is split or ramified then $\Frob_{\frP}$ and $\Frob_{\p}$ are conjugate in $\Gal(\ol{K_+} / K_+)$, so $\tr{\rho_{A_f, \ell}(\Frob_{\frP})} = a_{\p}(f)$. If $\p$ is inert, then $\Frob_{\frP}$ is conjugate to $\Frob_{\p}^2$ in $\Gal(\ol{K_+} / K_+)$ and therefore, by Cayley--Hamilton,
\[ \tr{\rho_{A_f, \ell}(\Frob_{\frP})} = (\tr{\rho_{A_f, \ell}(\Frob_{\p})})^2 - 2 \det{\rho_{A_f, \ell}(\Frob_{\p})} = a_{\p}(f)^2 - 2 N(\p). \] 
As expected, every irrational value of $a_{\p}(f)$ appears for $\p$ inert in $K$ and $a_{\p}(f)^2 \in \ZZ$ meaning, in particular, the traces are the same for the two eigenforms exchanged by Galois. Hence, we get a unique list of rational traces $\tr{\rho_{A_f, \ell}(\Frob_{\frP})}$ for the primes $\frP \subset \cO_K$ corresponding to the decomposition $(A_f)_K \sim E^2$ also showing that $E$ is isogenous to its conjugate under $\Gal(K / K_+)$ (which also follows from the fact that $A_f$ has an isogeny action of $K_f = \Q(\sqrt{3})$). Since we know $E$ is defined over $K$ and has conductor dividing $\p_{13}^2$, since $\X_0^{\Hur}(\p_{13})$ has good reduction away from $\p_{13}$ by \cite{carayol,kisin}, there are finitely many potential elliptic curves. The built in \textsc{Magma} function \texttt{EllipticCurveSearch} identifies $E$ and its conjugate from finitely many $\Frob_{\frP}$ traces (see the script \texttt{hilbert\_modular\_forms.m}). The minimal polynomial of $j(E) \in K$ is 
\begin{align*}
    x^6 & - 14475 x^5 + 66476374 x^4 + 86360537888 x^3  - 1766214565832855 x^2 
    \\
    & + 5634850735711464943 x - 5057260487992776789167. 
\end{align*}
This elliptic curve has bad reduction exactly at the unique prime of $K$ lying over $\p_{13}$ which is also where $K / K_+$ is ramified. The first few supersingular primes of $E$ have norms $1091, 2339, 6551, 7643, \dots$.

\subsection{Cyclic quotient singularities in mixed characteristic}

In this section, we show that the product-quotient surfaces $X$ have good reduction away from the order of $G$ and the bad primes of the curves $C_1$ and $C_2$. Of course, $X$ has only finitely many primes of bad reduction so this section is only necessary to get an explicit set of primes in Theorem~\ref{intro:thm:example}. To do this, we study cyclic quotient singularities and their resolutions in mixed characteristic. For simplicity, we only study the case of \textit{tame} group actions, meaning those for which $\# G$ is coprime to the residue characteristic.

\begin{lemma} \label{lemma:etale_fixed_locus}
Let $\cC \to S$ be a smooth proper relative curve of genus $g \ge 2$. Let $G$ be a finite group acting on $\cC$ as an $S$-scheme whose action is faithful on the generic fiber. If $\# G$ is coprime to all residue characteristics of $S$, then for any $g \in G$ the fixed point scheme $\Fix_g(\cC)$ is finite \etale over $S$. 
\end{lemma}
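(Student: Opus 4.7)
The plan is to combine a rigidity argument upgrading faithfulness on the generic fiber to faithfulness on every fiber with the standard local linearization of tame group actions on a smooth curve. Throughout I take $g \neq 1$, since $g = 1$ gives $\Fix_g(\cC) = \cC$. After reducing to $S$ connected, I would first observe that the automorphism functor $\underline{\Aut}_S(\cC)$ is represented by a finite unramified $S$-scheme: unramifiedness follows from $H^0(\cC_{\bar{s}}, T_{\cC_{\bar{s}}}) = 0$ when the genus is $\ge 2$, and finiteness from the Hurwitz bound on each fiber. Each element of $G$ defines a section $S \to \underline{\Aut}_S(\cC)$, and unramifiedness makes the locus where this section agrees with the identity section both open and closed in $S$. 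Faithfulness on the generic fiber forces this locus to be empty for $g \ne 1$, so $g$ acts non-trivially on every geometric fiber of $\cC \to S$.

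Next, fix a geometric point $\bar{x} \in \Fix_g(\cC)$ lying over $\bar{s} \in S$. Smoothness yields an isomorphism $\hat{\cO}_{\cC, \bar{x}} \cong \hat{\cO}_{S, \bar{s}}[[t_0]]$ for some formal uniformizer $t_0$. The action of $\langle g \rangle$ on the cotangent line at $\bar{x}$ is by a root of unity whose order divides $\#G$ and is therefore invertible on $S$; by Hensel's lemma this eigenvalue lifts uniquely to some $\zeta \in \hat{\cO}_{S, \bar{s}}^{\times}$ with $\zeta^n = 1$ for $n := \#\langle g \rangle$. The standard averaging
\[
t \,:=\, \frac{1}{n} \sum_{i=0}^{n-1} \zeta^{-i}\, g^i(t_0)
\]
produces a new formal uniformizer of $\hat{\cO}_{\cC, \bar{x}}$ (it reduces to $t_0$ modulo $(\mathfrak{m}_{\bar{s}}, t_0^2)$) which now satisfies $g \cdot t = \zeta\, t$ exactly. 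Non-triviality of $g$ on the fiber $\cC_{\bar{s}}$, established in the first step, forces $\zeta \ne 1$: otherwise $g$ would act trivially on $\hat{\cO}_{\cC_{\bar{s}}, \bar{x}}$ and hence, by geometric connectedness of $\cC_{\bar{s}}$, on the entire fiber. Consequently $\Fix_g(\cC)$ is cut out locally by the principal ideal $((\zeta - 1) t) = (t)$, exhibiting a formal neighborhood of $\bar{x}$ in $\Fix_g(\cC)$ as $\Spec \hat{\cO}_{S, \bar{s}}$.

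From this, $\Fix_g(\cC) \to S$ is \'etale at each of its geometric points, hence \'etale; being a closed subscheme of the proper $S$-scheme $\cC$, it is also proper over $S$, and a proper \'etale morphism is finite \'etale, which concludes the proof. The principal obstacle is the first step: without transferring faithfulness from the generic fiber to every fiber one cannot exclude the case $\zeta = 1$ at some fixed point, in which case $\Fix_g(\cC)$ would contain a whole geometric fiber and the map to $S$ would fail to be finite. The rigidity of $\underline{\Aut}_S(\cC)$ for curves of genus $\ge 2$, i.e.\ the vanishing of the tangent sheaf cohomology, is exactly what makes this transfer clean and what causes the tameness hypothesis to bite.
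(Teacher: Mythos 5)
Your proof is correct, and its overall skeleton matches the paper's: first use rigidity of automorphisms of curves of genus $\ge 2$ (vanishing of $H^0(\T_{\cC_s})$, so $\Aut(\cC/S)\to S$ is finite unramified) to propagate non-triviality of $g$ from the generic fiber to every geometric fiber, then use tameness to analyze the fixed locus pointwise. You are in fact more explicit than the paper on the first step -- the paper proves $\Aut(\cC/S)\to S$ is finite unramified but leaves the ``open and closed equalizer of sections'' deduction implicit -- and you correctly flag the degenerate case where $g$ is the identity, which the statement tacitly excludes. Where you diverge is in the execution of the local step: the paper realizes $\Fix_g(\cC)$ as the intersection $\Delta\cap\Gamma_g$ inside $\cC\times_S\cC$ and reduces \'etaleness to fiberwise transversality, which it proves by showing $\d\varphi\neq 0$ at a fixed point of a nontrivial tame automorphism via induction on the infinitesimal neighborhoods $\cO_{C,x}/\m_x^{n}$; you instead linearize the $\langle g\rangle$-action on $\wh{\cO}_{\cC,\bar x}\cong\wh{\cO}_{S,\bar s}[[t]]$ by the averaging trick (using invertibility of $n=\#\langle g\rangle$ and Hensel lifting of the root of unity) and read off that the fixed ideal is $((\zeta-1)t)=(t)$, so $\Fix_g(\cC)\to S$ is formally \'etale at $\bar x$. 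The two local arguments encode the same fact -- a tame automorphism with trivial differential at a fixed point is trivial -- but your linearization gives the \'etale local model of the fixed scheme directly, while the paper's induction is slightly more elementary in that it avoids constructing the equivariant coordinate. Both hinge on tameness in exactly the same place (dividing by $n$, resp.\ concluding $m\delta=0\Rightarrow\delta=0$).
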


\begin{proof}
First we show that $\Aut(\cC/S) \to S$ is finite and unramified. Since $g \ge 2$, the infinitesimal automorphisms $H^0(\cC_s, \T_{\cC_s}) = 0$ vanish on each fiber. Therefore, infinitesimal deformation theory shows that $\Aut(\cC/S)$ is formally unramified. The theory of regular proper models over a DVR shows that $\Aut(\cC/S)$ satisfies the valuative criterion of properness. 
\par 
Properness of $\Fix_g(\cC) \to S$ is automatic from the definition. It suffices to show it is \etale. This locus is the intersection of two flat divisors on $\cC \times_S \cC$, namely the diagonal $\Delta$ and the graph $\Gamma_g$ of $g$. Their intersection is \etale over $S$ if and only if they are transverse on each geometric fiber. To prove this, we must show that if $\varphi \in \Aut(C)$ is a nontrivial automorphism of a smooth proper curve $C$ over an algebraically closed field $k$ of characteristic $p$ and $\varphi$ has finite order coprime to $p$ then $\d{\varphi}$ is nonzero at every fixed point. Since $\varphi$ is nontrivial, $\Fix_{\varphi}(C)$ is a finite collection of points. For $x \in \Fix_{\varphi}(C)$, we will show that if $\d{\varphi}|_x = 0$ then $\varphi$ acts trivially on the formal local ring $\wh{\cO}_{C,x}$ which implies it is trivial on an open $x \in U \subset C$ and hence trivial on $C$ since $C$ is separated. Using induction and the sequences
\[ 0 \to \m_x^n / \m_x^{n+1} \to \cO_{C,x} / \m_x^{n+1} \to \cO_{C,x} / \m_x^{n} \to 0 \]
we will show $\varphi : \wh{\cO}_{C,x} \to \wh{\cO}_{C,x}$ is trivial. The case $n = 1$ is by assumption. Regularity gives $\m_x^n / \m_x^{n+1} = \Sym^n (\m_x / \m_x^2)$ and we assume $\varphi$ is the identity on $\m_x / \m_x^2$. Therefore, assuming towards induction that $\varphi_n := \varphi \ot \cO / \m_x^n = \id$ then $\varphi_{n+1} - \id$ defines a $k$-linear map,
\[ \delta : \cO_{C,x} / \m_x^{n} \to \Sym^n (\m_x / \m_x^2). \]
Now if we iterate the automorphism, $\varphi^k_{n+1} = (\id + \delta)^k = \id + k \delta$ since $\delta^2 = 0$ because it lands in a square zero ideal. But $\varphi$ has finite order $m$, so we must have $m \delta = 0$ and we assumed $m$ is prime to $p$ so $\delta = 0$. Hence $\varphi$ is trivial by induction. 
\end{proof}

\begin{lemma} \label{lemmma:is_cyclic_quotient}
Let $n$ be coprime to the characteristic of $\bar{k}$ and $C_n \acts X$ be the action of a cyclic group on a smooth surface over $\bar{k}$ with isolated fixed points. Then each singularity of $X / C_n$ is a $\frac{1}{n'}(1,a)$ singularity for some $n' \mid n$ and $a$ coprime to $n'$.
\end{lemma}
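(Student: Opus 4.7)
The plan is to work formally locally at a lift of the singularity, tame-linearize the stabilizer's action, and identify the resulting quotient in standard form.

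First, I would let $\bar y \in X/C_n$ be a singular point and choose a preimage $y \in X$ with stabilizer $H := \operatorname{Stab}_{C_n}(y) \subseteq C_n$; being a subgroup of a cyclic group, $H$ is cyclic of some order $m$ dividing $n$. Because $X \to X/C_n$ is \'etale at points with trivial stabilizer, and by the isolated-fixed-point hypothesis only finitely many points of $X$ have nontrivial stabilizer, the quotient $X/C_n$ is \'etale-locally around $\bar y$ isomorphic to $X/H$ near the image of $y$, so it suffices to analyze this local model.

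Next, I would tame-linearize the $H$-action, using that $m$ is invertible in $\bar k$. Iteratively splitting
\[
0 \to \widehat{\m}_y^{r+1}/\widehat{\m}_y^{r+2} \to \widehat{\cO}_{X,y}/\widehat{\m}_y^{r+2} \to \widehat{\cO}_{X,y}/\widehat{\m}_y^{r+1} \to 0
\]
via the Reynolds operator $\tfrac{1}{m}\sum_{h \in H} h$ produces an $H$-equivariant isomorphism $\widehat{\cO}_{X,y} \cong \bar k[[t_1, t_2]]$ through which $H$ acts via its representation on $T^{\ast}_y X$. Faithfulness of that tangent representation is exactly the argument already used in the proof of Lemma~\ref{lemma:etale_fixed_locus}. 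Since $H$ is cyclic and tame, Maschke's theorem decomposes $T^{\ast}_y X$ into two characters, so after a further change of variables a chosen generator $g \in H$ acts as $(t_1, t_2) \mapsto (\zeta_1 t_1, \zeta_2 t_2)$ with $\zeta_i$ of order $m_i$ and $\operatorname{lcm}(m_1, m_2) = m$.

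Finally, I would peel off the pseudo-reflection part via Chevalley--Shephard--Todd. The normal subgroup $R \subseteq H$ generated by pseudo-reflections equals $\langle g^{m_1}, g^{m_2}\rangle = \langle g^{\gcd(m_1, m_2)}\rangle$, so $\bar k[[t_1,t_2]]^R$ is again regular, and the residual cyclic group $H/R$ has order $n' := \gcd(m_1, m_2)$, which divides $m$ and hence $n$. A generator of this residual action is diagonal with both eigenvalues of exact order $n'$, say $\operatorname{diag}(\tilde\zeta^a, \tilde\zeta^b)$ for a primitive $n'$-th root $\tilde\zeta$ with $\gcd(a, n') = \gcd(b, n') = 1$. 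Replacing it by its $(a^{-1} \bmod n')$-th power puts the action in the standard form $\operatorname{diag}(\tilde\zeta, \tilde\zeta^c)$ with $c := b a^{-1} \bmod n'$ coprime to $n'$, exhibiting $\bar y$ as a $\tfrac{1}{n'}(1, c)$ cyclic quotient singularity. (If $n' = 1$ the quotient is already smooth at $\bar y$, so $\bar y$ was not a singularity in the first place.) The main obstacle is the tame-linearization step; once that is in hand, the remainder is a direct computation in a cyclic group.
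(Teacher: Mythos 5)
Your proof is correct, but it takes a genuinely different route from the paper: the paper disposes of this lemma in two lines by citing \cite{wild_quotient} (Propositions 2.3.6 and 2.4.8) to pass to the completion and recognize the quotient as a tame cyclic quotient singularity, whereas you give a self-contained classical argument -- slice to the stabilizer $H$ of a preimage, tame-linearize via the averaging operator, diagonalize the (faithful) cotangent representation, and quotient out the pseudo-reflection subgroup by Chevalley--Shephard--Todd before normalizing the residual diagonal action. Your version has the advantage of being elementary and of making transparent exactly where tameness enters (the Reynolds operator and the faithfulness of the tangent representation, the latter indeed being the same induction on $\mathcal{O}_{X,y}/\mathfrak{m}_y^{r}$ already run in Lemma~\ref{lemma:etale_fixed_locus}); the paper's citation buys brevity and defers the linearization and pseudo-reflection bookkeeping to the reference. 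One small remark: under the hypothesis that fixed points are isolated for each nontrivial element of $C_n$, a nontrivial pseudo-reflection cannot occur at all (it would fix a formal curve through $y$, hence a positive-dimensional component of the fixed locus), so in that reading your Chevalley--Shephard--Todd step is vacuous and one lands directly in the case $m_1=m_2=m$ with $n'=m$; your argument covers the weaker reading where only the common fixed locus of the whole group is assumed finite, which does no harm and slightly strengthens the lemma.
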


\begin{proof}
We need to show this singularity is formally equivalent to 
\[ \A^2 / \left< 
\begin{pmatrix}
\zeta & 0
\\
0 & \zeta^a
\end{pmatrix} \right> \]
for some choice of $n$-th root of unity $\zeta \in \bar{k}$. This follows from the combination of \cite[Proposition~2.4.8]{wild_quotient} and \cite[Proposition~2.3.6]{wild_quotient} to pass to the completion.
\end{proof}

\begin{prop} \label{prop:arithmetic_resolution}
Let $\cC_1 \to S$ and $\cC_2 \to S$ be smooth proper relative curves equipped with an action, over $S$, of a finite group $G$ satisfying the assumptions of Lemma~\ref{lemma:etale_fixed_locus}. Then there exists a smooth proper $S$-scheme $\X \to S$ equipped with a $S$-morphism $\pi : \X \to (\cC_1 \times_S \cC_2) / G$ which is identified with the Hirzebruch--Jung resolution on each geometric fiber. 
\end{prop}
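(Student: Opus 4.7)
The plan is to construct $\cX$ by étale descent, reducing locally to a simultaneous toric resolution of a tame cyclic quotient singularity in a family over $S$.

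First, I would identify the singular locus $\cY^{\mathrm{sing}}$ of $\cY := (\cC_1 \times_S \cC_2)/G$. Since $\# G$ is invertible on $S$, the quotient map $q : \cC_1 \times_S \cC_2 \to \cY$ is étale away from the locus with nontrivial stabilizers. The fixed locus $\Fix_g(\cC_1 \times_S \cC_2) = \Fix_g(\cC_1) \times_S \Fix_g(\cC_2)$ is finite étale over $S$ for each $g \neq 1$ by Lemma~\ref{lemma:etale_fixed_locus}, so the union $\bigcup_{g \neq 1} \Fix_g$ is finite étale over $S$, and hence so is its image $\cY^{\mathrm{sing}}$. This reduces the problem to a local construction in an étale neighborhood of a section $\sigma : S \to \cY^{\mathrm{sing}}$.

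Next, I would give a local normal form. After an étale base change splitting $\sigma$ into components, we can choose a lift $\tilde{\sigma} : S \to \cC_1 \times_S \cC_2$ with stabilizer $H \subset G$. Tameness of the action plus the argument of Lemma~\ref{lemmma:is_cyclic_quotient}, now applied fiberwise and spread to the family via the completed local ring $\widehat{\cO}_{\cC_1 \times_S \cC_2, \tilde{\sigma}}$, shows that $H$ is cyclic and that there exist $H$-equivariant étale coordinates $(u, v)$ on an étale neighborhood of $\tilde{\sigma}$ in $\cC_1 \times_S \cC_2$ identifying the action with the linear $\tfrac{1}{n}(1,a)$-action on $\A^2_S$. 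The key input is that, because $n = \# H$ is invertible on $S$, averaging over $H$ produces an $H$-equivariant splitting of the cotangent sequence at $\tilde{\sigma}$ and lifts it to a formal $H$-equivariant isomorphism, which by Artin approximation may be taken étale-locally. The integers $(n, a)$ are locally constant on $S$ since they are locally constant on $\cY^{\mathrm{sing}}$ (which is finite étale over $S$).

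Now on each such étale chart we have an $S$-morphism $\A^2_S / \mu_n \to \cY$, and the Hirzebruch--Jung resolution of $\A^2_{\ZZ[1/n]} / \mu_n$ with weights $(1,a)$ is a torus-equivariant toric resolution: it is the toric variety associated to the subdivision of the cone generated by $e_1$ and $(n, -a)$ along the lattice points dictated by the continued fraction $n/a = \dbrac{b_1, \dots, b_\ell}$. This toric resolution is defined over $\ZZ[1/n]$ and its formation commutes with arbitrary base change on $\Spec \ZZ[1/n]$; in particular, its fibers are exactly the Hirzebruch--Jung resolutions of the geometric fibers of $\A^2_S/\mu_n$. Base-changing to $S$ and gluing these local resolutions along the étale cover via descent (using that they are canonical) produces the desired smooth proper $\cX \to \cY$ with the required fiberwise identification.

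The main technical obstacle is the étale-local linearization of the $H$-action in the family: showing that a tame cyclic action on $\cC_1 \times_S \cC_2$ fixing a section is étale-locally isomorphic to a linear action on $\A^2_S$. On a fiber this is classical (and is essentially the content of Lemma~\ref{lemmma:is_cyclic_quotient}), but in a family one needs an equivariant version of Artin approximation together with the averaging trick, which is where invertibility of $\# G$ on $S$ is used crucially. Once this linearization is in hand, the remainder of the construction is formal toric geometry and étale descent.
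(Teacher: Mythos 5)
Your proposal is correct in outline and reaches the same fiberwise toric computation, but it globalizes it differently from the paper. The paper never passes to an \'etale cover of $\mathcal{Y} = (\cC_1 \times_S \cC_2)/G$ and never linearizes the stabilizer action: it observes that the images of $\cC_1 \times_S \Fix_g(\cC_2)$ and $\Fix_g(\cC_1) \times_S \cC_2$ are Weil divisors flat over $S$ which, near each singular multi-section, are formally the two torus-invariant axes of the $\tfrac{1}{n}(1,a)$ model; it then performs the weighted blowups with the Hirzebruch--Jung weights \emph{globally} along these divisors, and concludes by the base-change compatibility of blowups along flat centers. This buys you a construction with no gluing and no approximation argument. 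Your route instead requires two steps that you flag but should not underestimate: (i) the \'etale-local $H$-equivariant linearization in the family -- the averaging trick gives the formal statement, but passing from the formal to the \'etale-local isomorphism while \emph{retaining} equivariance needs an equivariant form of Artin approximation (or an approximation followed by re-averaging), which is true for tame actions but is a real lemma; and (ii) the descent step, where ``the resolutions are canonical'' is a fiberwise statement -- uniqueness of the minimal resolution on geometric fibers does not by itself produce an isomorphism of the two local resolutions over an overlap as $S$-schemes, so you must either show that the fiberwise isomorphism spreads out (e.g.\ via the closure of the graph over $\mathcal{Y}$) or characterize your resolution by a base-change-compatible universal property. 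Both points can be completed, so I regard this as a viable alternative proof rather than a gap, but the paper's global weighted-blowup argument is the shorter path precisely because it avoids them.
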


\begin{proof}
This is a consequence of the fact that the algorithm to produce Hirzebruch--Jung resolutions can be run globally and that (weighted) blowups along finite \etale multi-sections (indeed along any subscheme flat over $S$) are compatible with base change. This follows from the ideal sheaf sequence for a closed subscheme $Z \subset X$,
\[ 0 \to \mathcal{I}_Z \to \struct{X} \to \struct{Z} \to 0 \]
being compatible with base change along $S' \to S$ when $\struct{Z}$ is flat over $\cO_S$.
\par 
Over an algebraically closed field $\bar{k}$, the Hirzebruch--Jung resolution of a cyclic $\frac{1}{n}(1, a)$ singularity is obtained by a sequence of weighted blowups constructed from the ideals of the two torus invariant divisors  (see \cite{weighted_blowups}). This sequence of weighted blowups is determined entirely from the Hirzebruch--Jung continued fraction representation $\frac{n}{a} = \dbrac{b_1, \dots, b_{\ell}}$. 
\par 
The singular locus of $\mathcal{Y} = (\cC_1 \times_S \cC_2)/G$ consists of a collection of finite \etale multi-sections whose geometric fibers are isolated cyclic quotient singularities. Furthermore, the images in $\mathcal{Y}$ of $\cC_1 \times_S \Fix_g(\cC_2)$ and $\Fix_g(\cC_1) \times_S \cC_2$ as $g$ ranges over nontrivial elements of $G$ produce a collection of Weil divisors flat over $S$ whose complement is a toroidal embedding. In particular, near each singularity on a geometric fiber of $\mathcal{Y}$, there is a pair of such divisors formally equivalent to the torus-invariant coordinate axes in the standard quotient form of the singularity. Hence running the weighted-blowups with the Hirzebruch--Jung weights along these divisors produces a relative flat proper surface $\X \to \mathcal{Y}$ over $S$. Since the blowups commute with base change, on each fiber $\X$ is the Hirzebruch--Jung resolution. In particular, $\X \to S$ is smooth.
\end{proof}

\begin{rmk}
It is not necessary to use weighted blowups if we only ask that some smooth projective birational model of $X_{\eta}$ has good reduction. Indeed, the previous results show that the singular locus of $(\cC_1 \times_S \cC_2) / G$ is finite \etale over $S$. Blowing up these loci repeatedly does give a simultaneous resolution (this runs Lipman's algorithm fiberwise as no non-normal singularities are introduced as can be seen by considering the subdivision of the fan associated to blowing up a cyclic quotient singularity) but it is not minimal. Since the existence of diagonal symmetric forms is a birational invariant of smooth proper models, this is sufficient for our purposes. We can check existence on the minimal resolution and pull back these forms to any other model before specializing by upper semi-continuity. 
\end{rmk}

Now we use this to justify the claims made in the main text about the places of good reduction of product quotient surfaces. It follows from the existence of integral canonical models of Shimura varieties of hyperspecial level \cite{carayol, kisin} that Shimura curves have good reduction away from the level and the discriminant of $B$. In particular, $\X^{\Hur}(\p_{13})$ has good reduction away from $\p_{13}$. Therefore, the product quotient surfaces in Theorem~\ref{intro:thm:example} have good reduction away from primes over $13$ and $\# G$, which is divisible only by $2,3,7,13$. In particular, the supersingular primes of $E$, which are all $>1000$, are primes of good supersingular reduction for the surfaces $X$ in Theorem~\ref{intro:thm:example}.

\bibliographystyle{alpha}
\bibliography{refs}

\end{document}